\patchcmd\Gread@eps{\@inputcheck#1 }{\@inputcheck"#1"\relax}{}{}
\newtheorem{theorem}{Theorem}[section]
\newtheorem{remark}{Remark}[section]
\newtheorem{lemma}[theorem]{Lemma}
\newtheorem{proposition}[theorem]{Proposition}
\definecolor{light-gray}{gray}{0.95}
\def\centerarc[#1](#2)(#3:#4:#5){\draw[#1] ($(#2)+({#5*cos(#3)},{#5*sin(#3)})$) arc (#3:#4:#5);}
\newcommand{\vertiii}[1]{{\left\vert\kern-0.25ex\left\vert\kern-0.25ex\left\vert #1 
		\right\vert\kern-0.25ex\right\vert\kern-0.25ex\right\vert}}
\numberwithin{equation}{section}
\numberwithin{figure}{section}
\newcommand{\bb}[1]{{\mathbb #1}}
\newcommand{\<}{\big\langle}
\renewcommand{\>}{\big\rangle}
\renewcommand{\epsilon}{\varepsilon}
\newcommand{\R}{\mathbb R}
\newcommand{\Z}{\mathbb Z}
\renewcommand{\P}{\mathbb P}
\newcommand{\E}{\mathbb E}
\newcommand{\gen}{\mathscr{L}}
\newcommand{\Acal}{\mathcal{A}}
\newcommand{\Ccal}{\mathcal{C}}
\newcommand{\Dcal}{\mathcal{D}}
\newcommand{\Fcal}{\mathcal{F}}
\newcommand{\Ical}{\mathcal{I}}
\newcommand{\Ocal}{\mathcal{O}}
\newcommand{\Qcal}{\mathcal{Q}}
\newcommand{\Scal}{\mathcal{S}}
\newcommand{\alphabf}{\bm{\alpha}}
\newcommand{\betabf}{\bm{\beta}}
\newcommand{\gammabf}{\bm{\gamma}}
\title[Sample path MDP for the current and the tagged particle]{Sample path MDP for the current and the tagged particle in the SSEP}
\author{Xiaofeng Xue}
\address{School of Mathematics and Statistics, Beijing Jiaotong University, Beijing 100044, China.}
\email{xfxue@bjtu.edu.cn}
\author{Linjie Zhao}
\address{School of Mathematics and Statistics \& Hubei Key Laboratory of Engineering Modeling and Scientific Computing, Huazhong University of Science and Technology, Wuhan 430074, China}
\email{linjie\_zhao@hust.edu.cn}
\thanks{Acknowledgments. The project is partially  supported by the National Natural Science Foundation of China with Grant Numbers 11501542, 11971038, and 12371142. The authors also thank the financial support from the Fundamental Research Funds for the Central Universities in China and  Hubei Key Laboratory of Engineering Modeling and Scientific Computing.
}
\keywords{Current; exclusion process; moderate deviations; sample path; tagged particle.}
\begin{document}
	
\maketitle

\begin{abstract}
We prove sample path moderate deviation principles (MDP) for the current and the tagged particle in the symmetric simple exclusion process, which extends the results in \cite{xue2023moderate}, where the 
MDP was only proved at any fixed time.
\end{abstract}

\section{Introduction}

The exclusion process is informally defined as a family of indistinguishable particles performing random walks on some graph subjected to the exclusion rule, \emph{i.e.}, there is at most one particle at each site.  It has been a long-standing problem to investigate the behaviors of a typical particle in the exclusion process, which is called the tagged particle. Due to the interactions with the other particles, the replacement of the tagged particle itself is not a Markov process. Despite that, much progress has been made since Spitzer introduced the exclusion process in \cite{spitzer70}. We refer the readers to \cite{liggett99,komorowski2012fluctuations} and references therein for an excellent survey on the tagged particle process.

The model is usually called the symmetric simple exclusion process (SSEP) if the underlying graph is the one-dimensional infinite lattice $\Z$ and a particle jumps to its left and right neighbors at equal rates $1/2$. Since the total number of particles is conserved by the dynamics, the Bernoulli product measure with constant density $\rho \in [0,1]$, denoted by $\nu_{\rho}$,  is invariant for the SSEP, see \cite{liggettips} for example. Assume the initial distribution of the SSEP is the Bernoulli product measure $\nu_{\rho}$ conditioned on having a particle at the origin, and let $X(t)$ be the position of this tagged particle at time $t$. Since particles cannot jump over each other in the SSEP, the tagged particle turns out to be sub-diffusive. Precisely speaking, Arratia \cite{arratia1983motion} and De Masi and Ferrari \cite{DeMasiFerrari02} showed that 
\[\frac{X_t}{t^{1/4}} \Rightarrow \mathcal{N} (0,\sigma_X^2), \quad t \rightarrow +\infty,\]
where $\mathcal{N} (0,\sigma_X^2)$ is the normal distribution with mean zero and variance  $\sigma_X^2 := \sqrt{2/\pi} (1-\rho)/\rho$.  The above central limit theorem was then extended to the invariance principle by Peligrad and Sethuraman \cite{peligrad2008fractional},
\[\Big\{ \frac{X_{tN^2}}{N^{1/2}}: 0 \leq t \leq T \Big\} \Rightarrow \big\{ B_t^{1/4}: 0 \leq t \leq T\big\}, \quad N \rightarrow +\infty,\]
where $B_\cdot^{1/4}$ is the fractional Brownian motion with Hurst index $1/4$ and $T > 0$ is fixed.  Jara and Landim \cite{jara2006nonequilibrium} also proved central limit theorems for the tagged particle when the initial distribution of the SSEP is non-equilibrium. Recently, Conroy and Sethuraman \cite{conroy2023gumbel} showed that the scaled position of the tagged particle converges to a Gumble limit law when the process starts from the step configuration. 

Large and moderate deviation behaviors of the tagged particle in the SSEP have also been studied.  The large deviation behaviors were investigated  in \cite{sethuraman2013large,imamura2017large}.  When the initial distribution of the SSEP is the Bernoulli product measure $\nu_{\rho}$ conditioned on having a particle at the origin, the same authors \cite{xue2023moderate} proved that for any $t > 0$, the sequence $\{X (tN^2)/a_N\}_{N \geq 1}$ satisfies moderate deviation principles with decay rate $a_N^2/N$ and with rate function $\sqrt{2\pi} \rho \alpha^2 / [4(1-\rho) \sqrt{t}]$, where $\sqrt{N} \ll a_N \ll N$. 

We comment that the above deviation results only considered the behaviors of the tagged particle at any fixed time $t$.  The main aim of this paper is to  investigate  sample path moderate deviations  for the tagged particle.  Roughly speaking, we proved that when the SSEP starts from the same initial distribution as in \cite{xue2023moderate},  the sequence of processes $\{X (tN^2)/a_N: 0 \leq t \leq T\}_{N \geq 1}$ satisfies moderate deviation principles with decay rate $a_N^2/N$, where $\sqrt{N \log N} \ll a_N \ll N$ and $T > 0$ is fixed. Moreover, the moderate deviation rate function is proportional to the large deviation rate function of the fractional Brownian motion with Hurst index $1/4$. See Theorem \ref{theorem MDP of tagged particle} for details.

Since the relative ordering of the particles is conserved in the SSEP, closely related to the position of the tagged particle is the current of the process.  This current-tagged particle relationship has been used in the above literature \cite{DeMasiFerrari02,jara2006nonequilibrium,sethuraman2013large,xue2023moderate}.  To prove Theorem \ref{theorem MDP of tagged particle}, we  also proved sample path moderate deviation principles for the current, which is the content of Theorem \ref{theorem MDP of current} and has its own interest.

The main idea of the proof is as follows. As in \cite{xue2023moderate}, by using moderate deviation principles from hydrodynamic limits of the SSEP \cite{gao2003moderate}, we first prove finite-dimensional moderate deviation principles for the current and the tagged particle, where the rate function is given by a variational formula. In \cite{xue2023moderate}, this variational problem was solved by constructing a minimizer of the problem directly and cannot be extended to the finite-dimensional case straightforwardly. Instead, we  use the Fourier approach and then obtain  an explicit formula for the finite-dimensional deviation rate function. Then, we need to prove the exponential tightness of the sample path of the current and the tagged particle, which requires more delicate analysis compared with \cite{xue2023moderate} since all the estimates should be uniform in time. Finally, using standard results from large deviations theory, we obtain another variational formula for the rate function of the sample path, which could be solved easily from known results on fractional Brownian motion.

Very recently, the first named  author in \cite{xue2023nonequilibrium} proved non-equilibrium moderate deviations from hydrodynamic limits of the SSEP.  It remains a challenge to extend the results in this paper to the non-equilibrium case by using the above results. Another possible direction is to consider the sample path moderate deviation principles for the tagged particle in the SSEP in random environments or in the asymmetric exclusion process. We leave those as future works. 

\subsection{Related literature} For the tagged particle in exclusion processes, Saada \cite{saada1987tagged} and Rezakhanlou \cite{rezakhanlou1994evolution} proved the law of large numbers.   When the process starts from its equilibrium measure,  except the literature mentioned above, central limit theorems and invariance principles were proved  by Kipnis and Varadhan \cite{kipnis1986central} in the symmetric case in all dimensions except the one-dimensional nearest neighbor case, by Varadhan \cite{varadhan1995self} in the asymmetric mean-zero case, by Kipnis  \cite{Kipnis86} in the one-dimensional nearest neighbor asymmetric case  and by Sethuraman, Varadhan and Yau \cite{sethuraman2000diffusive} in the asymmetric case in three or higher dimensions.  In dimensions $d \leq 2$ when the underlying random walk has a drift except for the one-dimensional nearest-neighbor case,  a full CLT or invariance principle remains open. However, Sethuraman \cite{sethuraman2006diffusive} proved that  the tagged particle is diffusive in this case.

Jara \cite{jara2008longjumps} and the second named author \cite{zhao2023motion} proved CLT for the tagged particle when particles perform random walks with long jumps. The tagged particle in the SSEP with bond disorder was considered in \cite{jara2008quenched}.  In \cite{landim1998driven,loulakis2002einstein,loulakis2005mobility}, the authors considered the tagged particle process in order to check the validity of the Einstein relation. The heat kernel bound for the tagged particle in the SSEP was also obtained in \cite{giunti2019heat}.  In \cite{seppalainen98c,sethuraman2023atypical}, the authors investigated large deviations of the tagged particle in the asymmetric exclusion process.  Besides on the integer lattice $\Z^d$, the tagged particle was also considered on  regular trees \cite{chen2019limit} and on Galton--Watson trees \cite{gantert2020speed}.

The behavior of the particle has also been investigated in other interacting particle processes, such as in zero range processes \cite{jara2009nonequilibrium,jara2013nonequilibrium,sethuraman2007diffusivity} or in stirring-exclusion processes \cite{chen2013limit}.

\subsection{Notation}   For integers $m,n > 0$ and $U,V \subset \R$, let $\Ccal^{m,n} (U \times V)$ be the space of functions on $U \times V$ which are continuously $m$ (resp. $n$) times differentiable  on the first (resp. second) variable, and let $\Ccal_c^{m,n} (U \times V)$ be those functions in $\Ccal^{m,n} (U \times V)$ with compact support. Let $\mathcal{S} (\bb{R})$ be the space of Schwartz functions on $\R$ and let $\mathcal{S}^\prime(\bb{R})$ be its dual space, \emph{i.e.}, the space of tempered distributions. For a metric space $(\Sigma,d(\cdot,\cdot))$, let $\mathcal{D} ([0,\infty),\Sigma)$ be the space of càdlàg functions equipped with the Skorohod topology. 

For any column vector $\alphabf  = (\alpha_1,\alpha_2,\ldots,\alpha_n)^T \in \mathbb{R}^n$, denote by  $\|\alpha\|_\infty :=  \sup_{1 \leq j \leq n} |\alpha_j|$ the uniform norm of $\alphabf$. For $a \in \R$, let $[a]$ be the largest integer smaller than or equal to $a$. For two positive sequences $\{a_N\}$ and $b_N$, we write $a_N \ll b_N$ if $\lim_{N \rightarrow \infty} a_N / b_N = 0$.

\subsection{Outline of the paper}  In Section \ref{sec:result}, we introduce the model and state the main results of the paper.  We recall moderate deviations from hydrodynamic limits of the SSEP and prove several basic properties of the rate function from hydrodynamic limits in Section \ref{sec:estimates}.  Section \ref{sec:variation} is devoted to identifying the rate function of the tagged particle or the current with a variational formula.  We prove the exponential tightness of the tagged particle and the current in Section \ref{sec:exp tight}. Finally, the proof of Theorems \ref{theorem MDP of current} and \ref{theorem MDP of tagged particle} is presented in Section \ref{section proofs}. To make the paper easier to follow, we put the tedious calculations in Section \ref{sec:calculations}. 

\section{Model and results}\label{sec:result}

\subsection{Model} The state space of the one-dimensional symmetric simple exclusion process (SSEP) is  $\Omega=\{0,1\}^{\Z}$. For a configuration $\eta \in \Omega$, $\eta (x) = 1$ if and only if there is one particle at site $x$.  The infinitesimal generator of the process acts on local functions $f: \Omega \rightarrow \R$ as
\begin{equation}\label{generator}
	\gen f (\eta) = \frac{1}{2} \sum_{x \in \Z}  \big\{f(\eta^{x,x+1})  -  f(\eta) \big\}.
\end{equation}
Above, we call $f$ a local function if it depends on $\eta$ only through a finite number of sites, and  $\eta^{x,y}$ is the configuration obtained from $\eta$ by exchanging the values of $\eta(x)$ and $\eta(y)$, that is,
\[\eta^{x,y} (z) = \begin{cases}
	\eta (z) \quad&\text{if\;} z \neq x,y;\\
	\eta (y) \quad&\text{if\;} z = x;\\
	\eta (x) \quad&\text{if\;} z = y.
\end{cases}\]
For each $\rho \in [0,1]$, let $\nu_\rho$ be the Bernoulli product measure on $\Omega$ with marginals given by
\[\nu_\rho \{\eta: \eta (x) = 1\} = \rho, \quad \forall x \in \Z.\]
It is well known that $\nu_\rho$ is reversible and ergodic for the SSEP, \emph{cf.}~\cite{liggettips} for example.

Denote by $\{\eta_t\}_{t \geq 0}$  the Markov process with generator $\gen$.  For a probability measure $\mu$ on $\Omega$, denote by $\P_\mu$ the measure on the path space $\mathcal{D} ([0,\infty), \Omega)$ of the process $\eta_t$ with initial measure $\mu$, and by $\E_\mu$ the corresponding expectation.

 \subsection{MDP for the current} We first recall basic facts of the fractional Brownian motion.  Let $\{B_t^{1/4}\}_{t\geq 0}$ be the fractional Brownian motion with Hurst parameter $1/4$, which is a mean-zero continuous-time Gaussian process with covariance given by 
\[
{\rm Cov}\left(B_t^{1/4}, B_s^{1/4}\right)=\frac{1}{2}\left(t^{1/2}+s^{1/2}-|t-s|^{1/2}\right) =: a (t,s)
\]
for any $s,t \geq 0$.  It is shown in \cite{Decreu1999FBM} that the fractional Brownian motion has the following representation,
\begin{equation}\label{equ 1.1}
	B_t^{1/4}=\int_0^t\mathcal{K}(t,s)d B_s, \quad \forall t \geq 0,
\end{equation}
where $\{B_t\}_{t \geq 0}$ is the standard one-dimensional Brownian motion starting from the origin, and the kernel function is explicitly given by
\[
\mathcal{K}(t,s)=\frac{(t-s)^{-1/4}}{\sqrt{V}\Gamma(3/4)}F(1/4, -1/4, 3/4, 1-\frac{t}{s})
\]
for any $0\leq s<t$, where
\[
V=\frac{8\Gamma(3/2)\cos(\pi/4)}{\pi}
\text{\quad and
\quad}
F(\alpha,\beta,\gamma,z)=\sum_{k=0}^{+\infty}\frac{(\alpha)_k(\beta)_k}{(\gamma)_kk!}z^k
\]
with $(a)_k$ defined as $\Gamma(a+k)/\Gamma(a)$.
Moreover, it is easy to see that  for any $s,t \geq 0$,
\[
\int_0^{t\wedge s}\mathcal{K}(t,\tau)\mathcal{K}(s,\tau)d\tau= a (t,s).
\]
Fix a time horizon $T > 0$. Let $\mathcal{H}$ be the set of c\`{a}dl\`{a}g functions $f:[0, T] \rightarrow \R$ such that there exists a function  $h_f \in L^2[0, T]$ satisfying
\[
f (t)=\int_0^t\mathcal{K}(t,s)\,h_f(s)ds, \quad \forall  0\leq t\leq T.
\]
For any $f\in\mathcal{D}([0,T],\R)$, define
\[
\mathcal{I}_{path}(f)=
\begin{cases}
	\frac{1}{2}\int_0^T(h_f(s))^2ds \quad&\text{if~}f\in \mathcal{H},\\
	+\infty \quad&\text{otherwise}. 
\end{cases}
\]
By \eqref{equ 1.1} and \cite[Theorem 3.4.12]{deuschel1989large},  $\mathcal{I}_{path}(f)$ is the large deviation rate function of the sequence of the processes $\{\frac{1}{\sqrt{N}}B_t^{1/4}:~0 \leq t \leq T\}_{N\geq 1}$.

 For each $x \in \bb{Z}$, let $J_{x,x+1} (t)$ be the net number of particles across the bond $(x,x+1)$ up to time $t$, \emph{i.e.}, the number of particles jumping from $x$ to $x+1$ minus the number of particles jumping from $x+1$ to $x$ during the time interval $[0,t]$.
 
The first result concerns moderate deviations for the path of the current when the process starts from the initial measure $\nu_\rho$ for some $\rho \in (0,1)$. Throughout the article, we assume $\{a_N\}_{N \geq 1}$ to be a sequence of positive numbers such that
\[\lim_{N \rightarrow \infty} \frac{a_N}{N} = \lim_{N \rightarrow \infty} \frac{N \log N}{a_N^2} = 0.\]

\begin{theorem}\label{theorem MDP of current}
Fix $\rho \in (0,1)$. For $f \in \mathcal{D}([0,T], \R)$, define
	\[
	\Ical_{cur}(f)=\frac{1}{\sigma_J^2}\mathcal{I}_{path}(f)
	\]
where  $\sigma_J^2 := \sigma_J^2 (\rho) :=  \sqrt{\frac{2}{\pi}}\rho(1-\rho)$. Then,  the sequence of the processes $\left\{\frac{1}{a_N}J_{-1,0}(tN^2): 0 \leq t \leq T\right\}_{N \geq 1}$ satisfies moderate deviation principles with rate  $a_N^2/N$ and with rate function $\Ical_{cur}$. Precisely speaking, 	for any closed set $\Ccal \subseteq \mathcal{D}([0,T], \R)$, 
	\[
	\limsup_{N\rightarrow+\infty}\frac{N}{a_N^2}\log \P_{\nu_\rho} \Big(\left\{\frac{1}{a_N}J_{-1,0}(tN^2): 0\leq t\leq T\right\}\in \Ccal \Big)\leq -\inf_{f\in \Ccal} \Ical_{cur}(f),
	\]
 and for any open set $\Ocal \subseteq \mathcal{D}([0,T], \R)$,
	\[
	\liminf_{N\rightarrow+\infty}\frac{N}{a_N^2}\log \P_{\nu_\rho} \Big(\left\{\frac{1}{a_N}J_{-1,0}(tN^2): 0\leq t\leq T\right\}\in \Ocal \Big)\geq -\inf_{f\in \Ocal} \Ical_{cur}(f).
	\]
\end{theorem}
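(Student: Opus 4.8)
The plan is to establish the sample path MDP for the current by combining three ingredients: (i) finite-dimensional MDP for the vector $(J_{-1,0}(t_1 N^2), \ldots, J_{-1,0}(t_k N^2))/a_N$, which identifies the finite-dimensional rate function via a variational formula coming from the known moderate deviations of the hydrodynamic density field; (ii) exponential tightness of the processes $\{J_{-1,0}(tN^2)/a_N : 0 \le t \le T\}_{N \ge 1}$ in $\mathcal{D}([0,T],\R)$ at speed $a_N^2/N$; and (iii) a Dawson--G\"artner / projective limit argument to upgrade the finite-dimensional principle to the full sample path principle, together with an identification of the resulting rate function as $\Ical_{cur} = \sigma_J^{-2} \Ical_{path}$.

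\textbf{Step 1 (finite-dimensional MDP).} First I would fix $0 \le t_1 < t_2 < \cdots < t_k \le T$ and prove that $\big(\tfrac{1}{a_N} J_{-1,0}(t_1 N^2), \ldots, \tfrac{1}{a_N} J_{-1,0}(t_k N^2)\big)$ satisfies an MDP on $\R^k$ with speed $a_N^2/N$. The link is the classical current-density identity $J_{-1,0}(tN^2) = \sum_{x \ge 0} \big(\eta_{tN^2}(x) - \eta_0(x)\big)$ (summed in the appropriate regularized sense), which expresses the current as a linear functional of the empirical density field. Feeding this into the moderate deviation principle from hydrodynamic limits of the SSEP \cite{gao2003moderate} — whose rate function is the quadratic functional governing fluctuations of the density around $\rho$ — one obtains for the finite-dimensional distributions a rate function given by an infimum of that quadratic functional over all density fields consistent with the prescribed current values. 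The new technical point (relative to \cite{xue2023moderate}, which handled $k=1$ by an explicit minimizer) is to solve this variational problem for general $k$; I would do this with a Fourier-transform computation, diagonalizing the heat-kernel quadratic form, which yields an explicit Gaussian rate function whose covariance is exactly $\sigma_J^2 \, a(t_i, t_j)$ — i.e.\ $\sigma_J^2$ times the covariance of the fractional Brownian motion $B^{1/4}$.

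\textbf{Step 2 (exponential tightness).} Next I would prove exponential tightness: for every $\varepsilon > 0$ there is a compact $K_\varepsilon \subset \mathcal{D}([0,T],\R)$ with $\limsup_N \tfrac{N}{a_N^2} \log \P_{\nu_\rho}\big( \{J_{-1,0}(tN^2)/a_N\} \notin K_\varepsilon \big) = -\infty$. By the Aldous--Kurtz criterion this reduces to a uniform-in-time modulus-of-continuity estimate: controlling $\P_{\nu_\rho}\big( \sup_{|t-s| \le \delta} |J_{-1,0}(tN^2) - J_{-1,0}(sN^2)| \ge \gamma a_N \big)$ at superexponential rate as $\delta \to 0$. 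The natural route is to write the current as a martingale plus a bounded-variation compensator, apply exponential (Doob/Freidlin--Wentzell-type) martingale inequalities together with the explicit quadratic-variation structure of $J_{-1,0}$ in the SSEP, and use the regularity of the heat kernel; the condition $\sqrt{N \log N} \ll a_N$ is exactly what makes the relevant error terms superexponentially small. This is the step requiring the most care, since — unlike in \cite{xue2023moderate} — all bounds must be uniform over the time interval $[0,T]$.

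\textbf{Step 3 (assembly and rate function identification).} With Steps 1 and 2 in hand, a standard result in large deviation theory (e.g.\ Dawson--G\"artner for the projective limit combined with exponential tightness, as in \cite{deuschel1989large}) yields a full sample path LDP/MDP on $\mathcal{D}([0,T],\R)$ whose rate function $\Ical_{cur}$ is the projective limit of the finite-dimensional rate functions from Step 1. That projective-limit rate function is the Legendre-type variational expression $\sup_{k, \, t_i, \, \lambda} \{ \langle \lambda, f(t_\cdot)\rangle - \Lambda_{t_\cdot}(\lambda)\}$, which — because the finite-dimensional laws are Gaussian with covariance $\sigma_J^2 a(t_i,t_j)$ — is precisely $\sigma_J^{-2}$ times the rate function of the Gaussian process with covariance $a(\cdot,\cdot)$, namely $\sigma_J^{-2} \Ical_{path}$ by the representation \eqref{equ 1.1} and \cite[Theorem 3.4.12]{deuschel1989large}. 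One should double check that the candidate rate function $\Ical_{cur}$ is a good rate function (compact level sets), which follows from exponential tightness. I expect \textbf{Step 2 to be the main obstacle}: obtaining the uniform-in-time exponential modulus-of-continuity bound for the current at moderate-deviation speed, since this is where the delicate martingale estimates and the precise growth restriction on $a_N$ enter.
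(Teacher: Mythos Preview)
Your three-step architecture matches the paper's almost exactly: finite-dimensional MDP via the density-field variational problem solved by Fourier (Proposition~\ref{prop:Q_finiteD}), exponential tightness (Lemma~\ref{lem:current exponential tight}), and then the projective-limit assembly (the paper cites \cite[Theorem~4.28]{Feng2006LDP} rather than Dawson--G\"artner, but the content is the same), with the rate-function identification via the known LDP for fractional Brownian motion (Proposition~\ref{prop:rateF_varF_1}).

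The one substantive divergence is your mechanism for Step~2. A direct martingale-plus-compensator decomposition of $J_{-1,0}(tN^2)$ will not by itself yield the modulus-of-continuity bound at speed $a_N^2/N$: over a time window of macroscopic length $\delta$ the predictable quadratic variation is of order $\delta N^2$, so an exponential martingale/Bernstein bound gives $\tfrac{N}{a_N^2}\log \P(|\Delta M|>\gamma a_N)\lesssim -\gamma^2/(\delta N)\to 0$, not $-\infty$; the compensator $\tfrac12\int(\eta_s(-1)-\eta_s(0))\,ds$ is no better without further input. The paper's route is to reuse in Step~2 the same current--density decomposition you invoke in Step~1, namely \eqref{et2.2}: $J_{-1,0}/a_N=\langle\mu^N_t-\mu^N_0,G_n\rangle+\tfrac{1}{nNa_N}\sum_{x=0}^{nN-1}J_{x,x+1}$. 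The \emph{spatially averaged} current is then controlled by a uniform-in-time superexponential estimate (Lemma~\ref{lem:superexponential_estimate}), and \emph{this} is where the exponential-martingale and Feynman--Kac machinery enters --- the spatial averaging over $nN$ bonds is what buys the extra factor making the bound superexponential. The density-field piece is handled via the exponential tightness already proved in \cite{gao2003moderate}, but since $G_n\notin\Scal(\R)$ one must first replace it by a Schwartz approximation $\tilde G_n$; controlling $\sup_t|\langle\mu^N_t,G_n-\tilde G_n\rangle|$ superexponentially is itself nontrivial and is done via the stirring representation (the argument around \eqref{smooth out G_n}--\eqref{et15}). This is also precisely where the hypothesis $a_N\gg\sqrt{N\log N}$ is used, via union bounds over $O(N^3)$ time points. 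So your diagnosis that Step~2 is the crux is correct, but the tool is the spatial-average decomposition rather than a direct martingale bound on the single-bond current.
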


\subsection{MDP for the tagged particle} Note that the particles in the SSEP are indistinguishable. In this subsection, we distinguish one particular particle, put it initially at the origin, and call it the \emph{tagged} particle. Denote by $X(t)$ the position of the tagged particle at time $t$. By convention, $X(0)=0$. Since particles cannot jump over each other, the relative ordering of the particles is conserved.

Observe that the tagged particle process $\{X (t)\}_{t \geq 0}$ itself is not Markovian. However, both the coupled process $\{(\eta_t,X(t))\}_{t \geq 0}$ and the environment process $\{\xi_t\}_{t \geq 0}$ are Markovian, where $\xi_t (x) = \eta_t (X (t) + x)$ for $x \in \Z$ and $t \geq 0$.

For $\rho \in (0,1)$, let $\nu_\rho^*$ be the measure obtained from  the Bernoulli product measure $\nu_\rho$ conditioned on having a particle at the origin, {\emph i.e.},
\[\nu_\rho^* (\cdot) = \nu_\rho (\cdot \;| \eta(0) = 1).\]
It is also well known that $\nu_\rho^*$ is invariant and ergodic for the environment process $\{\xi_t\}_{t \geq 0}$.

The second result of the article concerns moderate deviations for the sample path of the tagged particle when the process starts from the initial measure $\nu_\rho^*$ for some $\rho \in (0,1)$.

\begin{theorem}\label{theorem MDP of tagged particle}
Fix $\rho \in (0,1)$. For $f \in \mathcal{D}([0,T], \R)$, define
	\[
	\Ical_{tag}(f)=\frac{1}{\sigma_X^2}\mathcal{I}_{path}(f)
	\]
	where $\sigma_X^2 = \sqrt{\frac{2}{\pi}}\frac{1-\rho}{\rho}$.   Then,  the sequence of the tagged particle processes $\left\{\frac{1}{a_N} X (tN^2): 0\leq t\leq T\right\}_{N \geq 1}$ satisfies moderate deviation principles with rate  $a_N^2/N$ and with rate function $\Ical_{tag}$. Precisely speaking, 	for any closed set $\Ccal \subseteq \mathcal{D}([0,T], \R)$, 
	\[
	\limsup_{N\rightarrow+\infty}\frac{N}{a_N^2}\log \P_{\nu_\rho^*} \Big(\left\{\frac{1}{a_N}X(tN^2): 0\leq t\leq T \right\}\in \Ccal \Big)\leq -\inf_{f\in \Ccal}\Ical_{tag}(f),
	\]
 and for any open set $\Ocal \subseteq \mathcal{D}([0,T], \R)$,
	\[
	\liminf_{N\rightarrow+\infty}\frac{N}{a_N^2}\log \P_{\nu_\rho^*} \Big(\left\{\frac{1}{a_N}X(tN^2): 0\leq t\leq T\right\} \in \Ocal\Big)\geq -\inf_{f\in \Ocal}\Ical_{tag}(f).
	\]
\end{theorem}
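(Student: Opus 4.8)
The plan is to obtain Theorem~\ref{theorem MDP of tagged particle} from Theorem~\ref{theorem MDP of current} through the classical current--tagged-particle identity. For $t\ge 0$ and $k\in\Z$ set
\[
N_t(k):=\sum_{y=0}^{k-1}\eta_t(y)\ \ (k\ge 1),\qquad N_t(0):=0,\qquad N_t(k):=-\sum_{y=k}^{-1}\eta_t(y)\ \ (k\le -1),
\]
so that $k\mapsto N_t(k)$ is non-decreasing. Since the SSEP preserves the relative ordering of the particles and $\nu_\rho^*$ puts the tagged particle at the origin, at every time $t$ the particles occupying $[X(t),\infty)$ are exactly those that started in $[0,\infty)$; comparing cardinalities and using that the net flux of particles into $[0,\infty)$ across the bond $(-1,0)$ up to time $t$ equals $J_{-1,0}(t)$, one gets the pathwise identity
\begin{equation}\label{eq:cur-tag}
J_{-1,0}(t)=N_t\big(X(t)\big),\qquad\forall\,t\ge 0,\ \ \P_{\nu_\rho^*}\text{-a.s.}
\end{equation}

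First I would record two reductions. (a) Theorem~\ref{theorem MDP of current} holds verbatim with $\P_{\nu_\rho}$ replaced by $\P_{\nu_\rho^*}$: the density $d\nu_\rho^*/d\nu_\rho=\rho^{-1}\mathbf{1}\{\eta(0)=1\}$ and its $\nu_\rho^*$-a.s.\ reciprocal $\rho$ are bounded, so all $(1+\delta)$-moments of the relevant Radon--Nikodym derivatives stay bounded and hence contribute nothing at speed $a_N^2/N\to\infty$; by the standard change-of-measure argument in large deviation theory, $\{a_N^{-1}J_{-1,0}(tN^2):0\le t\le T\}$ satisfies the same MDP under $\P_{\nu_\rho^*}$, with rate function $\mathcal{I}_{cur}$. (b) The rate functions match after rescaling: since $\sigma_X^2=\sigma_J^2/\rho^2$ and $\mathcal{I}_{path}$ is quadratically homogeneous (if $f=\int_0^{\cdot}\mathcal{K}(\cdot,s)h_f(s)\,ds$ then $\rho f=\int_0^{\cdot}\mathcal{K}(\cdot,s)\,\rho h_f(s)\,ds$, so $\mathcal{I}_{path}(\rho f)=\rho^2\mathcal{I}_{path}(f)$), we have $\mathcal{I}_{cur}(\rho g)=\rho^2\sigma_J^{-2}\mathcal{I}_{path}(g)=\sigma_X^{-2}\mathcal{I}_{path}(g)=\mathcal{I}_{tag}(g)$. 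As $g\mapsto\rho^{-1}g$ is a homeomorphism of $\mathcal{D}([0,T],\R)$, the contraction principle turns the MDP in (a) into an MDP for $\{(\rho a_N)^{-1}J_{-1,0}(tN^2):0\le t\le T\}$ with rate function $\mathcal{I}_{tag}$.

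By (a) and (b) the theorem follows once I show that, under $\P_{\nu_\rho^*}$, the processes $\{a_N^{-1}X(tN^2):0\le t\le T\}$ and $\{(\rho a_N)^{-1}J_{-1,0}(tN^2):0\le t\le T\}$ are exponentially equivalent at speed $a_N^2/N$ in $\mathcal{D}([0,T],\R)$. By \eqref{eq:cur-tag} their difference is $(\rho a_N)^{-1}\big(\rho X(tN^2)-N_{tN^2}(X(tN^2))\big)$, so it is enough to prove, for all $\delta,C'>0$,
\[
\limsup_{N\to\infty}\frac{N}{a_N^2}\log\P_{\nu_\rho}\!\Big(\sup_{0\le t\le T}\ \sup_{|k|\le C'a_N}\big|N_{tN^2}(k)-\rho k\big|>\delta a_N\Big)=-\infty,
\]
(the passage $\nu_\rho\to\nu_\rho^*$ costing only the bounded factor $\rho^{-1}$), together with an a priori bound $\sup_{0\le t\le T}|X(tN^2)|\le C'a_N$ off a set negligible at speed $a_N^2/N$. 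For the a priori bound I would run a short bootstrap: since $\mathcal{I}_{path}$ has compact level sets (it is the large deviation rate function of a Gaussian process, see \cite[Theorem~3.4.12]{deuschel1989large}), Theorem~\ref{theorem MDP of current} forces $\{a_N^{-1}J_{-1,0}(tN^2)\}$ to be exponentially tight, so $\sup_t|J_{-1,0}(tN^2)|\le Ca_N$ off an arbitrarily exponentially small set; feeding this and the displayed concentration estimate (for a suitably larger $C'$) into \eqref{eq:cur-tag}, a stopping-time argument — using that $X$ jumps by $\pm1$ — simultaneously yields $\sup_t|X(tN^2)|\le C'a_N$ and $\sup_t|\rho X(tN^2)-N_{tN^2}(X(tN^2))|\le\delta a_N$ on that event.

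The main obstacle is the displayed concentration estimate, which must be uniform both in $t$ and over the $O(a_N)$ values of $k$ — this is the delicate point absent from the fixed-time analysis of \cite{xue2023moderate}, and it is why the hypothesis is strengthened to $\sqrt{N\log N}\ll a_N$. The plan is to discretize $[0,TN^2]$ into $N^{\beta}$ subintervals for a suitable $\beta\in(0,2)$: at each discretization time $\eta_{tN^2}$ has law $\nu_\rho$ by invariance, so $N_{tN^2}(k)-\rho k$ is a centered sum of at most $C'a_N$ independent $\{0,1\}$-variables, and a submartingale (Doob) exponential inequality in $k$, followed by a union bound over the $N^{\beta}$ discretization times, controls $\sup_{|k|\le C'a_N}|N_{tN^2}(k)-\rho k|$ — this is the step where the logarithmic strengthening of the hypothesis is used; between two consecutive discretization times, $N_{\cdot}(k)$ changes only through rings of the two Poisson clocks attached to the bonds $(-1,0)$ and $(k-1,k)$, and the number of such rings is at most $\delta a_N$ with superexponentially high probability once $N^{2-\beta}\ll a_N$, uniformly over the $O(a_N N^{\beta})$ relevant (bond, subinterval) pairs. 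Assembling these bounds gives the estimate; these are exactly the uniform-in-time estimates carried out in Section~\ref{sec:exp tight}, and they also underlie the hydrodynamic moderate deviation input of \cite{gao2003moderate}.
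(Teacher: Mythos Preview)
Your approach is correct in outline and genuinely different from the paper's. The paper does \emph{not} deduce Theorem~\ref{theorem MDP of tagged particle} from Theorem~\ref{theorem MDP of current} by path-level exponential equivalence. Instead it reproves, for the tagged particle, the two ingredients already assembled for the current: a finite-dimensional MDP (Lemma~\ref{lemma finite dimensional MDP of tagged particle}, obtained from Lemma~\ref{lemma finite dimensional MDP of current} via the relation \eqref{eq:cur-tag} at the fixed times $t_1,\dots,t_n$) and exponential tightness (Lemma~\ref{lem:tagged particle exponential tight}), and then invokes \cite[Theorem~4.28]{Feng2006LDP} and Proposition~\ref{prop:rateF_varF_1} a second time. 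Your route is more economical once Theorem~\ref{theorem MDP of current} is in hand: a single exponential-equivalence estimate replaces both Lemma~\ref{lemma finite dimensional MDP of tagged particle} and the Feng--Kurtz reconstruction, and the rate function identification $\mathcal{I}_{cur}(\rho\,\cdot)=\mathcal{I}_{tag}(\cdot)$ is immediate. The paper's modular approach, on the other hand, isolates the finite-dimensional statement (Lemma~\ref{lemma finite dimensional MDP of tagged particle}) as a result of independent interest, and its uniform-in-time estimates are organized around proving exponential tightness rather than exponential equivalence --- your displayed concentration bound is \emph{close} to, but not literally, what Section~\ref{sec:exp tight} proves (compare \eqref{et9} and the event $C_{N,M,\varepsilon,\delta}$ in the proof of Lemma~\ref{lem:tagged particle exponential tight}), so you would still need to carry out the discretization argument you sketch, not merely cite it.

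One point in your write-up needs repair. In step~(a) the Radon--Nikodym argument only gives the \emph{upper} bound under $\P_{\nu_\rho^*}$: since $d\nu_\rho^*/d\nu_\rho=\rho^{-1}\mathbf{1}\{\eta_0(0)=1\}$ is bounded above you get $\P_{\nu_\rho^*}(\cdot)\le\rho^{-1}\P_{\nu_\rho}(\cdot)$, but $\nu_\rho$ is not absolutely continuous with respect to $\nu_\rho^*$, so there is no reverse inequality and the ``$\nu_\rho^*$-a.s.\ reciprocal'' does not yield the lower bound. The clean fix is the basic coupling of Remark~\ref{rmk:coupling}: couple $\eta_\cdot\sim\P_{\nu_\rho}$ and $\eta_\cdot^*\sim\P_{\nu_\rho^*}$ so that they differ by at most one particle at all times; then $\sup_{t}|J_{-1,0}^*(t)-J_{-1,0}(t)|\le 1$ deterministically, which gives exponential equivalence of the two current processes at scale $a_N$ and hence the full MDP under $\P_{\nu_\rho^*}$. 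This is exactly how the paper passes between the two measures throughout Sections~\ref{sec:exp tight} and~\ref{section proofs}.
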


\section{Preliminary estimates}\label{sec:estimates}

\subsection{Moderate deviations from hydrodynamic limits} We first recall moderate deviation principles for the empirical measure of the SSEP when the process starts from its invariant measure $\nu_\rho$ for some $\rho \in (0,1)$, see \cite{gao2003moderate} for details.  For each $N \geq 1$, the centered empirical measure $\mu^N_t \in \mathcal{S}^\prime(\bb{R})$ of the SSEP,  which acts on Schwartz functions $G \in \Scal(\R)$, is defined as
\[\<\mu^N_t,G\> = \frac{1}{a_N} \sum_{x \in \Z} \big( \eta_{tN^2} (x) - \rho \big) G (x/N).\]
In this subsection, we assume the sequence of positive numbers $\{a_N\}_{N \geq 1}$ satisfies
\[\lim_{N \rightarrow \infty} \frac{a_N}{N} = \lim_{N \rightarrow \infty} \frac{N}{a_N^2} = 0.\]
Let $\mathcal{D}([0,T],\mathcal{S}^\prime (\bb{R}))$ be the set of c\`{a}dl\`{a}g functions from $[0, T]$ to $\mathcal{S}^\prime (\bb{R})$  endowed with the topology under which $\lim_{n\rightarrow+\infty}\theta^n=\theta$ in $\mathcal{D}([0,T],\mathcal{S}^\prime (\bb{R}))$ if and only if \[\{\theta^n_t(G)\}_{0\leq t\leq T}\rightarrow \{\theta_t(G)\}_{0\leq t\leq T}\]
in $\mathcal{D}([0,T], \R)$ for any $G\in \Scal(\R)$.
For $G \in \Ccal^{1,\infty}_c ( [0,T]  \times \R)$ and $\mu \in \Dcal ([0,T],\mathcal{S}^\prime (\bb{R}))$,  define the linear functional $l (\cdot,\cdot)$ as
\[l(\mu, G)=\left\langle\mu_{T}, G_T\right\rangle-\left\langle\mu_{0}, G_0 \right\rangle-\int_{0}^{T}\langle\mu_{s},\left(\partial_s+(1/2) \partial_u^2\right) G_s \rangle d s.\]
Above, $G_s(u) = G(s,u)$ and $\langle \cdot,\cdot\rangle$ is the bilinear form on $\mathcal{S}^\prime(\bb{R}) \times \mathcal{S} (\bb{R})$ such that $\langle \mu,g\rangle=\mu(g)$ is the output of $\mu$ acting on $g$ for $\mu\in \mathcal{S}^\prime(\bb{R})$ and $g\in \mathcal{S} (\bb{R})$. The rate function $\mathcal{Q}_T (\mu) := \mathcal{Q}_{T,dyn} (\mu)+ \mathcal{Q}_0 (\mu_0)$ is defined as
\begin{equation*}
	\begin{aligned} \mathcal{Q}_{T,dyn}(\mu) &=\sup _{G \in \Ccal^{1,\infty}_c ( [0,T]  \times \R)}\left\{l(\mu, G)-\frac{\chi (\rho)}{2}\int_0^T  \int_\R (\partial_u G)^2 (s,u)\,du\,ds\right\}, \\ \mathcal{Q}_{0}\left(\mu_{0}\right) &=\sup _{\phi \in \Ccal_c^\infty \left(\R\right)}\left\{\left\langle\mu_{0}, \phi\right\rangle-\frac{\chi (\rho)}{2} \int_{\R} \phi^2 (u) d u\right\}, \end{aligned}
\end{equation*}
where $\chi (\rho) = \rho (1-\rho)$ is the compressibility of the SSEP.

\begin{theorem}[\cite{gao2003moderate}]\label{thm:mdphl}
	Let $\rho \in (0,1)$. The  sequence of processes $\{\mu^N_t: 0 \leq t \leq T\}_{N \geq 1}$ satisfies the MDP with decay rate $a_N^2 / N$ and with rate function $\mathcal{Q}_T (\mu)$. More precisely, for any closed set $\Ccal \subseteq \Dcal ([0,T], \mathcal{S}^\prime (\R))$,
	\[\limsup_{N \rightarrow \infty} \frac{N}{a_N^2} \P_{\nu_\rho} \big( \{\mu^N_t: 0 \leq t \leq T\} \in \Ccal \big) \leq - \inf_{\mu \in \Ccal} \mathcal{Q}_T (\mu),\] 
	and for any open set $\Ocal \subseteq \Dcal ([0,T], \mathcal{S}^\prime (\R))$,
	\[\liminf_{N \rightarrow \infty} \frac{N}{a_N^2} \P_{\nu_\rho} \big( \{\mu^N_t: 0 \leq t \leq T\} \in \Ocal \big) \geq - \inf_{\mu \in \Ocal} \mathcal{Q}_T (\mu).\]
\end{theorem}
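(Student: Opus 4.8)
\emph{Strategy.} The plan is to run the Kipnis--Landim--Olla--Varadhan large-deviation scheme at the moderate scale $a_N^2/N$. Because $a_N \ll N$ the exponential weight attached to each site is $O(a_N/N)$, which is small, so every nonlinearity in the particle dynamics linearizes and the rate function becomes the Gaussian-type functional $\mathcal{Q}_T = \mathcal{Q}_{T,dyn} + \mathcal{Q}_0$. As usual the proof splits into an upper bound (exponential Chebyshev inequality fed by exponential martingales, together with exponential tightness to pass from compact to closed sets) and a lower bound (change of measure plus a density argument). Throughout I would work with $\tilde\eta_t := \eta_{tN^2}$, whose generator is $N^2\gen$.

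\emph{Upper bound.} For $G \in \Ccal^{1,\infty}_c([0,T]\times\R)$ I would introduce the Dynkin exponential martingale
\[
M^{N,G}_t = \exp\Big\{ \tfrac{a_N^2}{N}\big( \langle\mu^N_t,G_t\rangle - \langle\mu^N_0,G_0\rangle \big) - \int_0^t e^{-\frac{a_N^2}{N}\langle\mu^N_s,G_s\rangle}\big(\partial_s + N^2\gen\big)\, e^{\frac{a_N^2}{N}\langle\mu^N_s,G_s\rangle}\, ds \Big\},
\]
which satisfies $\E_{\nu_\rho}[M^{N,G}_T]=1$. Since $\tfrac{a_N^2}{N}\langle\mu^N_s,G_s\rangle = \tfrac{a_N}{N}\sum_x (\tilde\eta_s(x)-\rho) G_s(x/N)$ with $a_N/N\to0$, a second-order Taylor expansion of $e^u-1$ inside the generator, a double summation by parts (turning the first-order term into $\partial_u^2 G$) and the replacement of $(\eta(x)-\eta(x+1))^2$ by its $\nu_\rho$-mean $2\chi(\rho)$ in the second-order term give, up to errors that are superexponentially negligible at rate $a_N^2/N$,
\[
\tfrac{N}{a_N^2}\log M^{N,G}_T = l(\mu^N,G) - \tfrac{\chi(\rho)}{2}\int_0^T\int_\R (\partial_u G)^2(s,u)\,du\,ds + \varepsilon_N .
\]
Combined with the elementary static estimate $\tfrac{N}{a_N^2}\log\E_{\nu_\rho}\big[\exp\{\tfrac{a_N}{N}\sum_x(\eta(x)-\rho)\phi(x/N)\}\big]\to \tfrac{\chi(\rho)}{2}\int_\R\phi^2\,du$ --- immediate from the product form of $\nu_\rho$ and a Taylor expansion of the single-site log-moment generating function --- the exponential Chebyshev inequality yields, for each $G$ and $\phi$, an upper bound with the corresponding linear functionals subtracted; optimizing over $(G,\phi)$ and applying a min-max lemma (licensed by the exponential tightness below) produces the upper bound with rate function $\mathcal{Q}_T$, first on compact sets and then on all closed sets.

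\emph{Exponential tightness and lower bound.} By the projective-limit structure of $\mathcal{S}^\prime(\R)$, exponential tightness in $\Dcal([0,T],\mathcal{S}^\prime(\R))$ reduces to showing, for each fixed $G\in\Scal(\R)$, that $\{\langle\mu^N_\cdot,G\rangle\}_N$ is exponentially tight in $\Dcal([0,T],\R)$; this I would get from an Aldous-type bound $\limsup_N \tfrac{N}{a_N^2}\log\P_{\nu_\rho}\big(\sup_{|t-s|\le\delta}|\langle\mu^N_t,G\rangle-\langle\mu^N_s,G\rangle|>\epsilon\big)\to-\infty$ as $\delta\to0$, via the time-independent exponential martingale and Doob's inequality, together with the single-time static bound. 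For the lower bound, given $\mu$ with $\mathcal{Q}_T(\mu)<\infty$ I would tilt $\P_{\nu_\rho}$ by $M^{N,H}_T$ for smooth $H$ (and a smooth initial field $\phi_0$); under the tilted law the same generator expansion shows $\mu^N$ obeys a law of large numbers whose deterministic limit $\pi^H$ solves the linear Cauchy problem $\partial_t\pi_t = \tfrac12\partial_u^2\pi_t - \chi(\rho)\partial_u^2 H_t$ with $\pi_0=\chi(\rho)\phi_0$, and $\tfrac{N}{a_N^2}\log M^{N,H}_T\to\mathcal{Q}_T(\pi^H)$. Since such $\pi^H$ form a $\mathcal{Q}_T$-dense subset of $\{\mathcal{Q}_T<\infty\}$ and $\mathcal{Q}_T$ is convex, lower semicontinuous and has compact level sets, a standard approximation then gives $\liminf_N \tfrac{N}{a_N^2}\log\P_{\nu_\rho}(\mu^N\in\Ocal)\ge-\mathcal{Q}_T(\mu)$ for every $\mu\in\Ocal$.

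\emph{Main obstacle.} The hard part is the family of superexponential replacement estimates: one must replace summed quadratic functionals of $\eta$ such as $\sum_x (\partial_u G)^2(x/N)\,(\eta(x)-\eta(x+1))^2$ or $\sum_x (\partial_u G)^2(x/N)\,(\eta(x)-\rho)$ by their equilibrium averages, with the probability of exceeding any $\delta>0$ decaying faster than $e^{-Ca_N^2/N}$ for all $C>0$. At the moderate scale this is lighter than the large-deviation Boltzmann--Gibbs principle, since the field is $O(1)$ rather than a genuine density deviation, but it still needs a quantitative input such as the spectral gap of the SSEP on boxes or an entropy / $H^{-1}$ estimate. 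The second delicate point, specific to the lower bound, is the density of smooth reachable trajectories in the effective domain of $\mathcal{Q}_T$ together with the truncations required to control mass escaping to infinity on the non-compact line, uniformly on $[0,T]$; verifying that $\mathcal{Q}_T$ is a good rate function underlies both halves.
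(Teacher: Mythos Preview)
The paper does not prove this theorem at all: it is quoted from \cite{gao2003moderate} and used as a black box (see the sentence preceding it, ``We first recall moderate deviation principles \ldots, see \cite{gao2003moderate} for details''). So there is no ``paper's own proof'' to compare against.

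That said, your sketch is a faithful outline of the Kipnis--Landim--Olla--Varadhan scheme as adapted by Gao to the moderate scale: exponential martingales plus a second-order Taylor expansion for the upper bound, exponential tightness via Aldous-type estimates on $\langle\mu^N_\cdot,G\rangle$, and a tilted law of large numbers plus density of smooth profiles for the lower bound. The identification of the superexponential replacement (Boltzmann--Gibbs at moderate scale) and the density argument as the two genuine technical points is accurate. One small correction: in the static expansion you want $\mu_0=\chi(\rho)\phi_0$ only after matching conventions; more precisely the initial tilt produces $\langle\mu_0,\phi\rangle - \tfrac{\chi(\rho)}{2}\int\phi^2$, whose Legendre dual is $\|\mu_0\|_{L^2}^2/(2\chi(\rho))$, so the optimal $\phi_0$ is $\mu_0/\chi(\rho)$ rather than the other way round. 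Otherwise the strategy is the standard one and would reproduce Gao's result.
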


\begin{remark}\label{rmk:coupling}
	Let $\eta_\cdot$ and $\eta_\cdot^*$ be two SSEPs with initial distributions $\nu_\rho$ and $\nu_\rho^*$ respectively. By the basic coupling (see \cite{liggettips} for example), one could couple $\eta_\cdot$ and $\eta_\cdot^*$ together such that $\eta_t (x) =\eta^*_t (x)$ for all but one $x$.  As a consequence, the associated empirical measures of the two processes satisfy
	\[|\<\mu^N_t - \mu^{N,*}_t, G\>| \leq \frac{\|G\|_\infty}{a_N}, \quad \forall t\geq 0, \;\forall G \in \Scal (\R).\]
	Thus, the above theorem also holds under $\P_{\nu_\rho^*}$.
\end{remark}

%\begin{remark}
%Reference \cite{gao2003moderate} proves Theorem \ref{thm:mdphl} under $\P=\P_{\nu_\rho}$. The proof relies on several estimations of some events having super-exponentially small probabilities with respect to the decay rate $\frac{a_N^2}{N}$. Note that
%\begin{equation}\label{equ relationship between two probability}
%\P_{\nu_\rho^*}(\cdot)\leq \rho^{-1}\P_{\nu_\rho}(\cdot),
%\end{equation}
%hence all super-exponential estimations given in \cite{gao2003moderate} hold under $\P_{\nu_\rho^*}$ and consequently Theorem \ref{thm:mdphl} holds under $\P=\P_{\nu_\rho^*}$.
%\end{remark}

\subsection{Properties}  In this subsection, we establish several properties of the measure $\mu$ when it satisfies $\Qcal_T (\mu) < \infty$.  Before that, we first introduce some notation. For $H,G \in \Ccal^{1,2}_c ([0,T] \times \R)$, define
\[[H,G] := \int_{0}^T \int_{\R} \partial_{u} H(t,u) \partial_{u}G(t,u)\,du\,dt.\]
In order to make $[\cdot,\cdot]$ an inner product, we say $H \sim G$ if $[H-G,H-G] = 0$. Let $\mathcal{H}^1$ be the Hilbert space obtained as the completion of  $\Ccal^{1,2}_c ([0,T] \times \R)) / \sim$ with respect to the inner product $[\cdot,\cdot]$. 

\begin{lemma}\label{lem:mu property-1}
Let $\mu \in \Scal^\prime (\R)$. If $\Qcal_T (\mu) < \infty$, then there exist functions $\psi \in L^2 (\R)$ and $H \in \mathcal{H}^1$ such that, for any $\phi \in \Scal (\R)$,
\[\<\mu_0,\phi\> = \int_{\R} \psi (u) \phi (u) du,\]
and that $\mu$ is the unique weak solution of the following PDE
\begin{equation}\label{mu PDE}
	\begin{cases}
		\partial_t \mu (t,u) = (1/2) \partial_u^2 \mu (t,u) - \chi (\rho) \partial_u^2 H (t,u), \quad t > 0,\; u \in \R,\\
		\mu (0, u) = \psi (u), \quad u \in \R.
	\end{cases}
\end{equation}
In particular, $\mu = \mu (t,u)$ is a function. Moreover,
\[\mathcal{Q}_0 (\mu_0) = \frac{\|\psi\|_{L^2 (\R)}^2}{2 \chi(\rho)}, \quad 	\mathcal{Q}_{T,dyn}(\mu) = \frac{\chi(\rho)}{2} [H,H].\]
\end{lemma}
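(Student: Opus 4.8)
The plan is to produce the functions $\psi$ and $H$ by two applications of the Riesz representation theorem, recognize the resulting identities as the weak formulation of \eqref{mu PDE}, read off the claimed values of the two pieces of the rate function, and finally invoke parabolic regularity to conclude that $\mu$ is a genuine function and that the weak solution is unique. Note first that $\Qcal_T(\mu)=\Qcal_{T,dyn}(\mu)+\Qcal_0(\mu_0)<\infty$ with both summands nonnegative, so each is finite.

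\emph{Step 1: the initial datum.} From $\Qcal_0(\mu_0)<\infty$ we get $\langle\mu_0,\phi\rangle\le\frac{\chi(\rho)}{2}\int_\R\phi^2\,du+\Qcal_0(\mu_0)$ for all $\phi\in\Ccal_c^\infty(\R)$; replacing $\phi$ by $\lambda\phi$ and optimizing over $\lambda\in\R$ turns this into the linear bound $|\langle\mu_0,\phi\rangle|\le\sqrt{2\chi(\rho)\Qcal_0(\mu_0)}\,\|\phi\|_{L^2(\R)}$. Hence $\phi\mapsto\langle\mu_0,\phi\rangle$ extends from $\Scal(\R)$ to a bounded linear functional on $L^2(\R)$, and the Riesz representation theorem yields a unique $\psi\in L^2(\R)$ with $\langle\mu_0,\phi\rangle=\int_\R\psi\phi\,du$ for all $\phi\in\Scal(\R)$. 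Inserting the maximizer $\phi=\psi/\chi(\rho)$ (after approximation by Schwartz functions) into the formula for $\Qcal_0$ and using Cauchy--Schwarz for the reverse inequality gives $\Qcal_0(\mu_0)=\|\psi\|_{L^2(\R)}^2/(2\chi(\rho))$.

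\emph{Step 2: the function $H$ and $\Qcal_{T,dyn}$.} The same optimization trick applied to the variational formula for $\Qcal_{T,dyn}(\mu)$ gives $|l(\mu,G)|\le\sqrt{2\chi(\rho)\Qcal_{T,dyn}(\mu)}\,[G,G]^{1/2}$ for every $G\in\Ccal_c^{1,\infty}([0,T]\times\R)$. In particular $l(\mu,\cdot)$ annihilates $\{G:[G,G]=0\}$, so it descends to $\Ccal_c^{1,\infty}([0,T]\times\R)/\!\sim$, and being Lipschitz for $[\cdot,\cdot]^{1/2}$ it extends uniquely to a bounded linear functional on $\mathcal{H}^1$ (using density of $\Ccal_c^{1,\infty}$ there). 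By the Riesz representation theorem in $\mathcal{H}^1$, there is a unique $H\in\mathcal{H}^1$ with $l(\mu,G)=\chi(\rho)[H,G]$ for all $G$. Substituting back, $\Qcal_{T,dyn}(\mu)=\chi(\rho)\sup_G\{[H,G]-\tfrac12[G,G]\}$; by Cauchy--Schwarz each term is $\le\tfrac12[H,H]$, the bound being attained along a sequence in $\Ccal_c^{1,\infty}$ converging to $H$ in $\mathcal{H}^1$, so $\Qcal_{T,dyn}(\mu)=\tfrac{\chi(\rho)}{2}[H,H]$.

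\emph{Step 3: the PDE, regularity, and uniqueness.} Integrating the right-hand side of $l(\mu,G)=\chi(\rho)[H,G]$ by parts in $u$ (legitimate since $G$ is compactly supported) turns this identity, together with the representation of $\mu_0$ from Step 1, into precisely the weak formulation of \eqref{mu PDE}, so $\mu$ is a weak solution. To see that $\mu$ is a function and the unique weak solution, test the identity against $G(s,u)=\theta(s)\phi(u)$ with $\theta\in\Ccal_c^\infty((0,T))$ and $\phi\in\Scal(\R)$: this shows $s\mapsto\langle\mu_s,\phi\rangle$ is absolutely continuous with distributional derivative $\tfrac12\langle\mu_s,\partial_u^2\phi\rangle+\chi(\rho)\int_\R\partial_u H(s,u)\,\partial_u\phi(u)\,du$, and differentiating $s\mapsto\langle\mu_s,P_{t-s}\phi\rangle$ (Duhamel's formula) gives, for all $\phi\in\Scal(\R)$ and a.e.\ $t\in[0,T]$,
\[
\langle\mu_t,\phi\rangle=\int_\R\psi\,(P_t\phi)\,du+\chi(\rho)\int_0^t\!\!\int_\R\partial_u H(s,u)\,\partial_u(P_{t-s}\phi)(u)\,du\,ds,
\]
where $P_r$ is the heat semigroup with generator $\tfrac12\partial_u^2$. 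Since $\psi\in L^2(\R)$ and $\partial_u H\in L^2([0,T]\times\R)$, the smoothing bound $\|\partial_u P_r\|_{L^2(\R)\to L^2(\R)}\le Cr^{-1/2}$ together with Young's convolution inequality show the right-hand side equals $\langle g_t,\phi\rangle$ for some $g\in L^2([0,T]\times\R)$; hence $\mu_t=g_t\in L^2(\R)$ for a.e.\ $t$, so $\mu$ is a function, and since the displayed formula holds for any weak solution and depends only on $\psi$ and $H$, uniqueness follows. I expect the main obstacle to be this last step: making the Duhamel representation rigorous with only $\mathcal{H}^1$-regularity of $H$ (so that $\partial_u^2 H$ is a priori just a distribution) and carefully passing between the test-function classes $\Ccal_c^{1,\infty}$, $\Ccal_c^{1,2}$ and products $\theta(s)\phi(u)$ with Schwartz $\phi$; the variational manipulations in Steps 1--2 are, by contrast, routine.
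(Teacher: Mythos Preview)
Your proposal is correct and follows essentially the same approach as the paper: the scaling/optimization trick to turn the variational bound into a linear bound, followed by Riesz representation in $L^2(\R)$ (for $\psi$) and in $\mathcal{H}^1$ (for $H$), is exactly what the paper does. For Step~3 the paper is terser than you are---it simply records the explicit mild-solution formula $\mu(t,u)=\int_\R p_t(u-v)\psi(v)\,dv-\chi(\rho)\int_0^t\int_\R p'_{t-s}(u-v)\,\partial_v H(s,v)\,dv\,ds$ immediately after the lemma---which is precisely your Duhamel representation written on the function side; your more careful derivation via testing and differentiating $s\mapsto\langle\mu_s,P_{t-s}\phi\rangle$ is a sound way to justify it.
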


The proof of the above lemma follows directly from Riesz's representation theorem.  We refer the readers to Appendix \ref{app:proof lem mu p-1} for its proof. 

In the above lemma, we say $\mu$ is a weak solution to \eqref{mu PDE} if for any $G \in \Ccal_c^{1,2} ([0,T] \times \R)$ and for any $0 < t \leq T$, 
\begin{multline*}
	\int_{\R} \mu (t,u) G (t,u) du = 	\int_{\R} \psi (u) G (0,u) du + \int_{0}^t \int_{\R} \mu (s,u) (\partial_{s} + (1/2) \partial_{u}^2) G (s,u)\,du\,ds \\
	+ \chi (\rho) \int_{0}^t \int_{\R} \partial_{u} G (s,u) \partial_{u} H(s,u)\,du\,ds.
\end{multline*}
Actually, $\mu$ has the following explicit expression: for any $t \geq 0$ and for any $u \in \R$,
\begin{equation}\label{mu formula}
	\mu(t,u) = \int_{\R} p_t (u-v) \psi (v) dv - \chi (\rho) \int_0^t \int_{\R} p_{t-s}^\prime (u-v) \partial_v H (s,v) \,dv \,ds,
\end{equation}
where $p_t (u)$ is the heat kernel
\[p_t (u)  = \frac{1}{\sqrt{2 \pi t}} e^{-u^2/(2t)}\]
and $p^\prime_t(u):=\frac{\partial}{\partial u}p_t(u)$. Using the explicit formula of $\mu$, we have the following lemma.

\begin{lemma}\label{lem:mu property-2}
 If $\Qcal_T (\mu) < \infty$, then the limit
 \[\int_{0}^\infty [\mu(t,u) - \mu (0,u)] du := \lim_{M \rightarrow \infty} \int_{0}^M [\mu(t,u) - \mu (0,u)] du\]
 is well-defined. Moreover,
 \[\int_{0}^\infty [\mu(t,u) - \mu (0,u)] du  =  \int_0^t  \int_{\R}  \Big[-\frac{1}{2} {p}_s^{\prime} (v) \psi (v) + \chi (\rho) p_{t-s}(v) \partial_v H (s,v) \Big]  \,dv \,ds. \]
\end{lemma}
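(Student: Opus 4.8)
The idea is to evaluate $\int_0^M[\mu(t,u)-\mu(0,u)]\,du$ exactly for each fixed $M$ and then let $M\to\infty$; the existence of the limit will be a by-product of this computation, not a separate check. Write $P_t g(u):=\int_\R p_t(u-v)g(v)\,dv$ for the heat semigroup on $L^2(\R)$, so that by \eqref{mu formula} and Lemma \ref{lem:mu property-1} (which gives $\mu(0,\cdot)=\psi$),
\[
\mu(t,u)-\mu(0,u)=\big(P_t\psi(u)-\psi(u)\big)-\chi(\rho)\int_0^t\int_\R p_{t-s}'(u-v)\,\partial_v H(s,v)\,dv\,ds .
\]
I integrate this over $u\in[0,M]$ and treat the ``initial-data'' term and the ``forcing'' term separately, using the heat-kernel bounds $\|p_\tau\|_{L^2(\R)}\le c\tau^{-1/4}$, $\|p_\tau'\|_{L^2(\R)}\le c\tau^{-3/4}$, $\|p_\tau'\|_{L^1(\R)}\le c\tau^{-1/2}$, together with $[H,H]=\int_0^T\|\partial_v H(s,\cdot)\|_{L^2(\R)}^2\,ds<\infty$, so that $\partial_v H(s,\cdot)\in L^2(\R)$ for a.e.\ $s$.

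For the forcing term, after justifying Fubini I integrate in $u$ first: since $p_{t-s}'$ is the derivative of the even function $p_{t-s}$, one has $\int_0^M p_{t-s}'(u-v)\,du = p_{t-s}(M-v)-p_{t-s}(v)$, so the forcing contribution equals $-\chi(\rho)\int_0^t\int_\R[p_{t-s}(M-v)-p_{t-s}(v)]\partial_v H(s,v)\,dv\,ds$. For fixed $s$, $p_{t-s}(M-\cdot)$ escapes to infinity, so $\int_\R p_{t-s}(M-v)\partial_v H(s,v)\,dv\to 0$ as $M\to\infty$; it is dominated, uniformly in $M$, by $\|p_{t-s}\|_{L^2}\|\partial_v H(s,\cdot)\|_{L^2}\le c(t-s)^{-1/4}\|\partial_v H(s,\cdot)\|_{L^2}$, which is integrable in $s$ on $[0,t]$ by Cauchy--Schwarz and $[H,H]<\infty$. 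Dominated convergence then gives that the forcing contribution converges to $\chi(\rho)\int_0^t\int_\R p_{t-s}(v)\partial_v H(s,v)\,dv\,ds$.

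For the initial-data term I use the heat equation $\partial_s P_s\psi=\tfrac12\partial_u^2 P_s\psi$: for $0<\epsilon<t$, the fundamental theorem of calculus in $s$ and Fubini give
\[
\int_0^M\big[P_t\psi(u)-P_\epsilon\psi(u)\big]\,du=\tfrac12\int_\epsilon^t\int_0^M\partial_u^2P_s\psi(u)\,du\,ds=\tfrac12\int_\epsilon^t\big[\partial_uP_s\psi(M)-\partial_uP_s\psi(0)\big]\,ds .
\]
Since $P_\epsilon\psi\to\psi$ in $L^2(\R)$, hence in $L^1([0,M])$, the left side tends to $\int_0^M[P_t\psi(u)-\psi(u)]\,du$; and since $|\partial_uP_s\psi(u)|\le\|p_s'\|_{L^2}\|\psi\|_{L^2}\le c\,s^{-3/4}\|\psi\|_{L^2}$ uniformly in $u$, with $s^{-3/4}$ integrable on $[0,t]$, dominated convergence lets me send $\epsilon\to 0$. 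Letting then $M\to\infty$, the endpoint at $M$ vanishes because $\int_\R p_s'(M-v)\psi(v)\,dv\to 0$ (same domination), leaving $-\tfrac12\int_0^t\partial_uP_s\psi(0)\,ds$; writing $\partial_uP_s\psi(0)=\int_\R p_s'(-v)\psi(v)\,dv$ and using that $p_s'$ is odd identifies this with the first term in the statement. Summing the two contributions yields the claimed identity, and in particular shows that $\lim_{M\to\infty}\int_0^M[\mu(t,u)-\mu(0,u)]\,du$ exists.

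The step that requires genuine care — the main obstacle — is the Fubini used in the forcing term: the crude bound $\int_0^M|p_{t-s}'(u-v)|\,du\le\|p_{t-s}'\|_{L^1(\R)}=c(t-s)^{-1/2}$, combined with only $L^2$ (not $L^1$) control on $\partial_v H(s,\cdot)$, does not render the triple integral absolutely convergent after integrating $s$ over $[0,t]$. One fixes this by noting that for $v\notin[0,M]$ the sign of $p_{t-s}'(u-v)$ is constant in $u$, so $\int_0^M|p_{t-s}'(u-v)|\,du$ is a difference of two values of $p_{t-s}$ and decays like $p_{t-s}(\mathrm{dist}(v,[0,M]))$; hence, as a function of $v$, it lies in $L^2(\R)$ with norm $\le c\big(\sqrt M\,(t-s)^{-1/2}+(t-s)^{-1/4}\big)$, after which Cauchy--Schwarz in $v$ and a Hölder estimate in $s$ close the argument. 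Equivalently, one may bypass \eqref{mu formula} altogether by feeding the test function $G(s,u)=(P_{t-s}\mathbf{1}_{[0,M]})(u)$ — suitably regularized near $s=t$ and $|u|=\infty$ — into the weak formulation of \eqref{mu PDE}: since $G$ solves the backward heat equation the bulk term $\int_0^t\langle\mu_s,(\partial_s+\tfrac12\partial_u^2)G_s\rangle\,ds$ drops out, and what remains is exactly $\int_0^M[P_t\psi(u)-\psi(u)]\,du+\chi(\rho)\int_0^t\int_\R\partial_u(P_{t-s}\mathbf{1}_{[0,M]})(u)\,\partial_uH(s,u)\,du\,ds$, i.e.\ the same decomposition with the integrability manifest. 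All remaining estimates are the routine heat-kernel bounds listed above.
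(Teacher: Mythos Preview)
Your proof is correct and follows essentially the same strategy as the paper: split $\int_0^M[\mu(t,u)-\mu(0,u)]\,du$ via \eqref{mu formula} into an initial-data piece and a forcing piece, integrate each in $u$ to reduce to boundary terms at $0$ and $M$, and kill the $M$-terms by the ``translated $L^2$ functions have vanishing inner product'' principle. The only organizational difference is in the initial-data term: you keep the $s$-integral on the outside and use dominated convergence in $s$ (with the pointwise bound $|\partial_u P_s\psi|\le c\,s^{-3/4}\|\psi\|_{L^2}$), whereas the paper first collapses the time integral, proves $\int_0^t p_s'(\cdot)\,ds\in L^2(\R)$ as a standalone estimate, and then applies the $L^2$-shift lemma once. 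Your explicit discussion of the Fubini justification for the forcing term is more careful than the paper, which performs that interchange without comment.
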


\begin{proof}
By \eqref{mu formula}, 
\[\int_{0}^M [\mu(t,u) - \mu (0,u)] du = {\rm I} - {\rm II},\]
where 
\begin{align*}
{\rm I} &= \int_0^M \int_{\R} \big[p_t (u-v) - p_0 (u-v)\big]\psi (v) \,dv\,du,\\ 
{\rm II} &= \chi (\rho) \int_0^t \int_0^M \int_{\R} p_{t-s}^\prime (u-v) \partial_v H (s,v) \,dv\,du \,ds.
\end{align*}
For the first term, we write it as
\begin{multline*}
{\rm I} = \int_0^M \int_{\R}  \int_0^t \dot{p}_s (u-v) \psi(v)\,ds\,dv\,du = \frac{1}{2} \int_0^M \int_{\R}  \int_0^t {p}_s^{\prime\prime} (u-v) \psi(v)\,ds\,dv\,du \\
= \frac{1}{2}  \int_{\R}  \int_0^t \big[{p}_s^{\prime} (M-v) -{p}_s^{\prime} (v)\big] \psi(v)\,ds\,dv.
\end{multline*}
Above, $\dot{p}_s (u) := \frac{\partial}{\partial s} p_s (u) $ and we used the identity $\dot{p}_s (u)= (1/2) p_s^{\prime\prime} (u)$. For the second term,
\[{\rm II} = \chi (\rho) \int_0^t  \int_{\R} \big[p_{t-s} (M-v) - p_{t-s}(v) \big]\partial_v H (s,v) \,dv \,ds.\]
Thus, we conclude the proof once we show that
\begin{align}
	\lim_{M \rightarrow \infty} \int_{\R}  \Big(\int_0^t {p}_s^{\prime} (M-v)\,ds \Big) \psi(v) \,dv = 0,\label{pre1}\\
	\lim_{M \rightarrow \infty}  \int_0^t  \int_{\R} p_{t-s} (M-v) \partial_v H (s,v) \,dv \,ds = 0.\label{pre2}
\end{align}
One could check directly that, for any $t > 0$, $p_s (u) \in L^2 ([0,t] \times \R)$ and 
\begin{equation}\label{pre3}
\int_0^t {p}_s^{\prime} (\cdot)\,ds  \in L^2 (\R).
\end{equation}
We refer the readers to Appendix \ref{app pf pre3} for the proof of \eqref{pre3}. Finally, \eqref{pre1} and \eqref{pre2} follow from the following claim: if $f,g \in L^2 (\R)$, then
\[\lim_{M \rightarrow \infty} \int_{\R} f(M-u) g (u) du = 0.\]
The similar result holds when $f,g \in L^2 ([0,t] \times \R)$. Indeed, by splitting
\[\int_{\R} f(M-u) g (u) du = \int_{|u| > M/2} f(M-u) g (u) du + \int_{|u| \leq M/2} f(M-u) g (u) du,\]
and by Cauchy-Schwarz inequality, we bound
\[\Big|\int_{\R} f(M-u) g (u) du\Big| \leq \|f\|_{L^2 (\R)} \Big(\int_{|u| > M/2}  g (u)^2 du \Big)^{1/2} + \|g\|_{L^2 (\R)} \Big(\int_{u > M/2}  f (u)^2 du \Big)^{1/2},\]
which converges to zero as $M \rightarrow \infty$. This concludes the proof.
\end{proof}

\begin{remark}\label{rmk:mu property}
By repeating the above arguments, one could also show that if $\Qcal_T (\mu) < \infty$, then  for any $0 < t \leq T$,
\[\lim_{M \rightarrow \infty} \frac{1}{M} \int_{0}^M u [\mu(t,u) - \mu (0,u)] \,du=0.\]
\end{remark}

\begin{remark}\label{rmk:goodness of Qcal}
Using \eqref{mu formula} and Lemma \ref{lem:mu property-1}, it is not difficult to show that the moderate deviation rate function $\Qcal_T (\cdot)$ is good. Here we give an outline of the check of this property. The property that $\Qcal_T (\cdot)$ is lower semi-continuous follows directly from the fact that $\Qcal_T (\cdot)$ is the supremum of a class of continuous functions from $\Dcal ([0,T], \mathcal{S}^\prime (\R))$ to $\R$. Consequently, 
\[
\left\{\mu:~\Qcal_T (\mu)\leq c\right\}:=\mathbb{L}_c
\]
is closed for any $c>0$. Hence, we only need to show that $\mathbb{L}_c$ is compact. Using \eqref{mu formula}, Lemma \ref{lem:mu property-1}, the formula of integration by parts, Cauchy-Schwarz inequality and $\partial_tp(t,u)=\frac{1}{2}\partial^2_up(t,u)$, we have
\begin{align*}
\left|\<\mu_t, f\>\right|&\leq \|\psi\|_{L^2(\R)}\|f\|_{L^2(\R)}+\chi(\rho)\sqrt{T}\|f^\prime\|_{L^2(\R)}\sqrt{[H, H]}\\
&=\sqrt{2\chi(\rho)\mathcal{Q}_0(\mu_0)}\|f\|_{L^2(\R)}+\sqrt{T}\|f^\prime\|_{L^2(\R)}\sqrt{2\chi(\rho)\mathcal{Q}_{T,dyn}(\mu)}\\
&\leq 2 \sqrt{\chi(\rho)\Qcal_T (\mu)}\left(\|f\|_{L^2(\R)}+\sqrt{T}\|f^\prime\|_{L^2(\R)}\right)
\end{align*}
and
\begin{align*}
&\left|\<\mu_t, f\>-\<\mu_s, f\>\right|\\
&\leq \frac{t-s}{2}\|f^{\prime\prime}\|_{L^2(\R)}\|\psi\|_{L^2(\R)}+\chi(\rho)\sqrt{t-s}\|f^{\prime}\|_{L^2(\R)}\sqrt{[H, H]}
+\frac{\chi(\rho)}{2}(t-s)\sqrt{T}\|f^{\prime\prime\prime}\|_{L^2(\R)}\sqrt{[H, H]}\\
&\leq \sqrt{2\Qcal_T (\mu)\chi(\rho)}\left(\frac{t-s}{2}\|f^{\prime\prime}\|_{L^2(\R)}+\sqrt{t-s}\|f^\prime\|_{L^2(\R)}+\frac{t-s}{2}\sqrt{T}\|f^{\prime\prime\prime}\|_{L^2(\R)}\right)
\end{align*}
for any $\mu$ such that $\Qcal_T (\mu)<+\infty$, any $f\in \mathcal{S} (\R)$ and any $0\leq s\leq t\leq T$. Therefore, for any $f\in \mathcal{S}(\R)$, $\{\mu_t(f):~0\leq t\leq T\}_{\mu\in \mathbb{L}_c}$ are uniformly bounded and equicontinuous. As a result, the compactness of $\mathbb{L}_c$ follows from Ascoli-Arzela theorem.  

\end{remark}

\subsection{Alternative formulas for $\Qcal_T$}   Introduce 
\[\mathcal{A} = \big\{ \mu: \text{there exist}\;H \in \Ccal^{1,2}_c ([0,T] \times \R)) \;\text{and}\; \psi \in \Ccal^2_c (\R) \;\text{such that $\mu$ satisfies\,\eqref{mu PDE}} \big\}.\]
One could check directly that if $\mu \in \mathcal{A}$, then for any $t > 0$, $\mu(t,\cdot)$ is integrable on $\R$ and vanishes at infinity.
For $\mu \in \Acal$, define $J$ the macroscopic instantaneous current as 
\begin{equation}\label{J}
J = -\frac{1}{2} \partial_{u} \mu + \chi (\rho) \partial_{u} H.
\end{equation}
 Obviously, 
\[\partial_t \mu + \partial_{u} J = 0.\] 
Define $K(t,u)$ the macroscopic current across the macroscopic point $u$ during time $[0,t]$ as
\[K(t,u) := \int_0^t J(s,u) ds.\]
Then, for $u \in \R$ and $t > 0$, 
\begin{equation}\label{mu_K}
	\partial_{u} K(t,u) = \mu (0,u) - \mu (t,u).
\end{equation}
By direct calculations, for $\mu \in \Acal$,
\begin{multline}\label{q_mu}
\chi (\rho)  \Qcal_T (\mu) = 	\chi (\rho) \Qcal_T  (\mu_0,K) := \frac{1}{2} \int_{0}^{T} \int_{\mathbb{R}}\left(\partial_{t} K\right)^{2}(t, u)  d u d t+\frac{1}{4} \int_{\mathbb{R}} \left(\partial_{u} K\right)^{2}(T, u) d u 
	\\ +\frac{1}{8} \int_{0}^{T} \int_{\mathbb{R}}\left(\partial_{u} \mu_{0}(u)- \partial_{u}^{2} K(t, u) \right)^{2} d u d t+\frac{1}{2} \int_{\mathbb{R}} \big[ \mu_{0}^{2}(u)- \mu_{0}(u) \partial_{u} K(T, u) \big]d u.
\end{multline}
The above formula is exactly \cite[Eqn. (3.8)]{xue2023moderate}, whose proof is presented  in Appendix \ref{appendix:pf_q_mu}.

\section{Variational formula for the rate function}\label{sec:variation}

In this section, we present a variational formula for the rate function $\mathcal{I}_{path}$. For any $n \geq 1$, any column vector $\alphabf \in \R^n$ and any $0\leq t_1<t_2<\ldots<t_n$, let
\begin{equation}\label{I_tj_n}
\mathcal{I}_{_{\{t_j\}_{j=1}^n}}(\alphabf)=\frac{1}{2}\alphabf^TA^{-1}_{_{\{t_j\}_{j=1}^n}}\alphabf
\end{equation}
where $A_{_{\{t_j\}_{j=1}^n}}$ is the $n\times n$ matrix with  $(k,\ell)$-component equal to $a(t_k,t_\ell)$ for $1 \leq k,\ell \leq n$. Recall $a(\cdot,\cdot)$ is the covariance function of the fractional Brownian motion with Hurst parameter $1/4$.

The following variational formula relates $\mathcal{I}_{path}$ the rate function of the sample path to the corresponding finite-dimensional rate functions, which is a direct consequence of \cite[Theorem 4.28]{Feng2006LDP}.

\begin{proposition}\label{prop:rateF_varF_1}
	For any $f\in \mathcal{D}([0,T],\R)$,
	\[
	\mathcal{I}_{path}(f)=\sup\Big\{\mathcal{I}_{_{\{t_j\}_{j=1}^n}}\left(f_{_{\{t_j\}_{j=1}^n}}\right):~n\geq 1, 0\leq t_1<t_2<\ldots<t_n\leq T, t_j\in \Delta_f^c \text{~for all~}1\leq j\leq n\Big\},
	\]
	where $f_{_{\{t_j\}_{j=1}^n}}=(f ({t_1}), f ({t_2}),\ldots,f ({t_n}))^T$ and $\Delta_f^c$ is the set of continuities of $f$.
\end{proposition}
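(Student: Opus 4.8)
The plan is to read the claimed identity off the abstract projective–limit result \cite[Theorem 4.28]{Feng2006LDP}, after recording the two inputs it needs: a process-level LDP with a good rate function, and the explicit form of all finite-dimensional rate functions. Recall from \eqref{equ 1.1} and \cite[Theorem 3.4.12]{deuschel1989large} that $\{N^{-1/2}B^{1/4}_t:0\le t\le T\}_{N\ge1}$ satisfies an LDP in $\mathcal{D}([0,T],\R)$ with speed $N$ and good rate function $\mathcal{I}_{path}$. The first observation I would make is that $\mathcal{I}_{path}(f)<\infty$ forces $f\in\mathcal{H}$, and every element of $\mathcal{H}$ is continuous (as follows from the representation $f(t)=\int_0^t\mathcal{K}(t,s)h_f(s)\,ds$ defining $\mathcal{H}$, this being the Cameron--Martin space of the continuous Gaussian process $B^{1/4}$); hence all level sets of $\mathcal{I}_{path}$ lie inside $C([0,T],\R)$. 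Since $C([0,T],\R)$ is closed in $\mathcal{D}([0,T],\R)$ and carries there the uniform topology, and since $N^{-1/2}B^{1/4}$ has continuous paths, restricting the LDP to this closed subset gives the same LDP in $(C([0,T],\R),\|\cdot\|_\infty)$ with rate function $\mathcal{I}_{path}$.

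Next I would pin down the finite-dimensional rate functions. Fix $0\le t_1<\cdots<t_n\le T$; we may assume $t_1>0$, since $B^{1/4}_0=0$ and right-continuity makes the value $t_1=0$ redundant inside the supremum. The Gaussian vector $N^{-1/2}(B^{1/4}_{t_1},\dots,B^{1/4}_{t_n})$ has covariance $N^{-1}A_{\{t_j\}_{j=1}^n}$, and $A_{\{t_j\}_{j=1}^n}$ is positive definite: via the identity $a(t,s)=\int_0^{t\wedge s}\mathcal{K}(t,\tau)\mathcal{K}(s,\tau)\,d\tau$ it is the Gram matrix in $L^2[0,T]$ of the functions $\mathcal{K}(t_j,\cdot)\mathbf{1}_{[0,t_j]}$, which are linearly independent because on each interval $(t_{k-1},t_k)$ only $\mathcal{K}(t_k,\cdot),\dots,\mathcal{K}(t_n,\cdot)$ fail to vanish. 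By the classical LDP for centered Gaussian vectors, $\{N^{-1/2}(B^{1/4}_{t_1},\dots,B^{1/4}_{t_n})\}_{N\ge1}$ then obeys an LDP in $\R^n$ with speed $N$ and rate function $\mathcal{I}_{\{t_j\}_{j=1}^n}(\alphabf)=\tfrac{1}{2}\alphabf^{T}A^{-1}_{\{t_j\}_{j=1}^n}\alphabf$. On the other hand, the evaluation map $\pi\colon f\mapsto(f(t_1),\dots,f(t_n))$ is continuous on $(C([0,T],\R),\|\cdot\|_\infty)$, so applying the contraction principle to the LDP of the previous paragraph shows that the same sequence obeys an LDP with rate function $\alphabf\mapsto\inf\{\mathcal{I}_{path}(g):g(t_j)=\alpha_j,\ 1\le j\le n\}$. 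Uniqueness of rate functions identifies these two expressions; in particular $\mathcal{I}_{\{t_j\}_{j=1}^n}(f_{\{t_j\}_{j=1}^n})\le\mathcal{I}_{path}(f)$ for every $f$ and every choice of times.

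Finally I would invoke \cite[Theorem 4.28]{Feng2006LDP}: with a $\mathcal{D}([0,T],\R)$-level LDP carrying a good rate function whose finite-dimensional contractions are exactly the $\mathcal{I}_{\{t_j\}_{j=1}^n}$, that theorem yields
\[
\mathcal{I}_{path}(f)=\sup\Big\{\mathcal{I}_{\{t_j\}_{j=1}^n}\big(f_{\{t_j\}_{j=1}^n}\big):n\ge1,\ 0\le t_1<\cdots<t_n\le T,\ t_j\in\Delta_f^c\Big\}.
\]
One direction, namely $\mathcal{I}_{\{t_j\}_{j=1}^n}(f_{\{t_j\}_{j=1}^n})\le\mathcal{I}_{path}(f)$, is the elementary half and is already contained in the previous paragraph, since projection cannot increase the rate; the substance of Theorem 4.28 is the reverse inequality, and the restriction $t_j\in\Delta_f^c$ is precisely what makes the evaluation maps Skorohod-continuous at $f$. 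I expect the only genuinely delicate point to be verifying that our data meet the hypotheses of that theorem, essentially exponential tightness in $\mathcal{D}([0,T],\R)$ (which follows from the goodness of $\mathcal{I}_{path}$ together with the $\mathcal{D}$-level LDP itself) combined with the finite-dimensional LDPs just established, after which the stated identity is immediate.
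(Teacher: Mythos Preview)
Your proposal is correct and follows the same approach as the paper: both identify $\mathcal{I}_{path}$ as the path-level LDP rate function of $\{N^{-1/2}B^{1/4}\}$, identify $\mathcal{I}_{\{t_j\}_{j=1}^n}$ as the finite-dimensional rate functions, and then invoke \cite[Theorem 4.28]{Feng2006LDP}. The paper simply states these ingredients as well known and cites \cite{deuschel1989large}, whereas you spell out the details (goodness, positive definiteness of $A_{\{t_j\}}$, continuity of the evaluation maps), but the strategy is identical.
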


\begin{proof}
	It is well known that for any $0\leq t_1<t_2<\ldots<t_n$, $\mathcal{I}_{_{\{t_j\}_{j=1}^n}}(\cdot)$ is the large deviation rate function of the sequence of Gaussian vectors $\left\{\frac{1}{\sqrt{N}}\left(B^{1/4}_{t_1}, B^{1/4}_{t_2}, \ldots, B^{1/4}_{t_n}\right)\right\}_{N\geq 1}$, and $\mathcal{I}_{path} (\cdot)$ is the rate function of the sequence $\left\{\frac{1}{\sqrt{N}}B_t^{1/4}:~0 \leq t \leq T\right\}_{N\geq 1}$, see \cite{deuschel1989large} for example.  Then, by \cite[Theorem 4.28]{Feng2006LDP}, the result follows immediately.
\end{proof}

The following lemma gives a variational formula for the finite-dimensional rate function appearing in the last proposition, which is the main result of this section.

\begin{proposition}\label{prop:rateF_varF_2}
	For any positive integer $n$, any column vector $\alphabf = (\alpha_1,\ldots,\alpha_n)^T \in \R^n$, and any $0 < t_1 < t_2 < \ldots < t_n \leq T$,  we have 
	\begin{equation}\label{Q_n}
		\inf \Big\{\Qcal_T (\mu): \int_0^\infty [\mu(t_j,u) - \mu(0,u)] du= \alpha_j,\; 1 \leq j \leq n \Big\} = \frac{\sqrt{2\pi}}{4\chi(\rho)} \alphabf^TA^{-1} \alphabf,
	\end{equation}
	where $A = A_{_{\{t_j\}_{j=1}^n}}$ for short.
\end{proposition}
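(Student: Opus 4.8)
The plan is to reduce the constrained minimization over $\mu$ with $\Qcal_T(\mu)<\infty$ to a finite-dimensional quadratic minimization, using the explicit representations provided by Lemmas \ref{lem:mu property-1} and \ref{lem:mu property-2}. By Lemma \ref{lem:mu property-1}, writing $\chi = \chi(\rho)$, any $\mu$ with $\Qcal_T(\mu)<\infty$ is determined by a pair $(\psi, H)$ with $\psi \in L^2(\R)$ and $H \in \mathcal{H}^1$, and $\chi\Qcal_T(\mu) = \tfrac12\|\psi\|_{L^2(\R)}^2 + \tfrac{\chi^2}{2}[H,H]$. By Lemma \ref{lem:mu property-2}, the constraint functionals $\Phi_j(\mu):=\int_0^\infty[\mu(t_j,u)-\mu(0,u)]\,du$ are \emph{linear} in $(\psi, \partial_u H)$: explicitly,
\[
\Phi_j(\mu) = \int_0^{t_j}\!\!\int_\R \Big[-\tfrac12 p_s'(v)\psi(v) + \chi\, p_{t_j-s}(v)\,\partial_v H(s,v)\Big]\,dv\,ds .
\]
So the problem is: minimize a Hilbert-space quadratic form $\tfrac1{2\chi}\|\psi\|^2 + \tfrac{\chi}{2}[H,H]$ subject to $n$ linear constraints $\Phi_j = \alpha_j$. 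This is a classical least-norm problem whose solution lies in the span of the Riesz representers of the constraint functionals, and the optimal value is $\tfrac12\alphabf^T G^{-1}\alphabf$ where $G$ is the Gram matrix of those representers in the relevant inner product.

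The core computation is therefore to identify the Gram matrix $G$ and show $G^{-1} = \tfrac{\sqrt{2\pi}}{2\chi}\,A^{-1}$, i.e.\ $G = \tfrac{2\chi}{\sqrt{2\pi}}A$, equivalently $G_{j,k} = \tfrac{2\chi}{\sqrt{2\pi}}\,a(t_j,t_k)$. Concretely, one finds the optimal $\psi,H$ as linear combinations $\psi = \sum_k \lambda_k \psi^{(k)}$, $\partial_u H = \sum_k \lambda_k h^{(k)}$ where $\psi^{(k)}(v) = -\tfrac1{2\chi}\int_0^{t_k} p_s'(v)\,ds$ (times a constant from the $\tfrac1{2\chi}$ weight) and $h^{(k)}(s,v) = p_{t_k-s}(v)\mathbf{1}_{s<t_k}$ (with weight $\chi$), and then $G_{j,k}$ is a sum of two terms:
\[
G_{j,k} = \frac1{2\chi}\cdot\frac14\int_\R\Big(\int_0^{t_j}p_s'(v)ds\Big)\Big(\int_0^{t_k}p_s'(v)ds\Big)dv
+ \chi\int_0^{t_j\wedge t_k}\!\!\int_\R p_{t_j-s}(v)p_{t_k-s}(v)\,dv\,ds .
\]
Using the semigroup identity $\int_\R p_a(v)p_b(v)\,dv = p_{a+b}(0) = \tfrac1{\sqrt{2\pi(a+b)}}$ for the second term, and the analogous identity for $p_s'$ (namely $\int_\R p_a'(v)p_b'(v)\,dv = -p_{a+b}''(0) = \tfrac1{\sqrt{2\pi}}(a+b)^{-3/2}$, then integrating in $s$) for the first, both integrals evaluate explicitly; the $\chi$-dependence collapses to an overall factor $\tfrac{2\chi}{\sqrt{2\pi}}$ and the remaining $t$-dependent expression must be checked to equal $a(t_j,t_k) = \tfrac12(t_j^{1/2}+t_k^{1/2}-|t_j-t_k|^{1/2})$. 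This is the one genuinely computational step and I would relegate the elementary integrals and algebraic simplification (e.g.\ evaluating $\int_0^{t_j\wedge t_k}(t_j+t_k-2s)^{-1/2}ds$ and combining with the first term) to the calculations section referenced as Section \ref{sec:calculations}. I expect this verification — getting the two pieces to combine into exactly $a(t_j,t_k)$ with the right constant — to be the main obstacle, though it is routine once set up.

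There are two supporting points to nail down. First, I should confirm that the least-norm argument is legitimate here: the feasible set is nonempty (this is where the dense class $\Acal$ and the smooth approximations are used — or one simply exhibits $(\psi,H)$ realizing any given $\alphabf$), the objective is a strictly convex coercive quadratic on a Hilbert space, and the constraint map is bounded linear, so a unique minimizer exists and is characterized by orthogonality (Lagrange) conditions; invertibility of $A$ (hence of $G$) follows since $a(\cdot,\cdot)$ is the covariance of the fractional Brownian motion evaluated at distinct times $t_1<\dots<t_n$, which is strictly positive definite. Second, I should make sure the functionals $\psi^{(k)}$ and $h^{(k)}$ genuinely lie in $L^2(\R)$ and $\mathcal{H}^1$ respectively — the former is exactly the content of \eqref{pre3} proved in the appendix, and the latter is immediate since $\int_0^{t_k}\!\int_\R p_{t_k-s}(v)^2\,dv\,ds = \int_0^{t_k} p_{2(t_k-s)}(0)\,ds < \infty$. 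With these in place the proposition follows by assembling the optimal value $\tfrac12\alphabf^T G^{-1}\alphabf = \tfrac{\sqrt{2\pi}}{4\chi}\alphabf^T A^{-1}\alphabf$.
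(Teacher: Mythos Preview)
Your approach is correct and substantially different from the paper's. The paper first restricts to the dense class $\Acal$ (Proposition~\ref{prop:Q_finiteD}) via an approximation argument, then works in the $(K,\mu_0)$ variables: it derives Euler--Lagrange equations, solves them by Fourier transform for $n=1$ (Lemmas~\ref{lem:Q_1}--\ref{lem:Q_2}), shows the general minimizer is a linear superposition of the one-dimensional minimizers, and computes the resulting Gram matrix $q_{jk}$ through a lengthy Fourier-space calculation (Section~\ref{appendix:pf_q_jk}). You instead recognize directly, via Lemma~\ref{lem:mu property-1}, that $\Qcal_T$ is a weighted Hilbert-space norm on $(\psi,\partial_uH)\in L^2(\R)\times L^2([0,T]\times\R)$ and, via Lemma~\ref{lem:mu property-2}, that the constraints are bounded linear functionals with explicit representers; this reduces the problem to a single Gram-matrix computation using heat-kernel convolution identities, bypassing both the $\Acal$ reduction and the Fourier machinery. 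Your route is considerably shorter; what the paper's route buys is explicit Fourier-space expressions for the minimizers and the identity \eqref{current_fourier}, and those concrete minimizers are reused in the proof of the lower bound in Lemma~\ref{lemma finite dimensional MDP of current} --- but your representers are equally explicit and would serve the same purpose. One correction: in your displayed formula for $G_{j,k}$ the coefficient on the first integral should be $\tfrac{\chi}{4}$, not $\tfrac{1}{8\chi}$ (the Riesz representer of $\psi\mapsto -\tfrac12\int_\R\big(\int_0^{t_k}p_s'\,ds\big)\psi$ with respect to the inner product $\tfrac{1}{\chi}\langle\cdot,\cdot\rangle_{L^2}$ is $-\tfrac{\chi}{2}\int_0^{t_k}p_s'\,ds$); with that fix both terms carry an overall factor of $\chi$ and the computation indeed gives $G_{j,k}=\tfrac{2\chi}{\sqrt{2\pi}}\,a(t_j,t_k)$ as you claim.
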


Note that 
\[\frac{\sqrt{2\pi}}{4\chi(\rho)} \alphabf^TA^{-1} \alphabf = \frac{1}{\sigma_J^2} \mathcal{I}_{_{\{t_j\}_{j=1}^n}}(\alphabf).\]
In the rest of this section, we prove Proposition \ref{prop:rateF_varF_2}.

\subsection{Restricted to $\mu \in \Acal$} We first show that we could restrict the infimum in Proposition \ref{prop:rateF_varF_2} to $\mu \in \Acal$, which is the content of the next result.

\begin{proposition}[The case $n > 1$]\label{prop:Q_finiteD}
	For any positive integer $n$, any column vector $\alphabf \in \R^n$, and any $0 < t_1 < t_2 < \ldots < t_n \leq T$,  we have 
	\begin{equation}\label{Q_n_A}
		\inf \Big\{\Qcal_T (\mu): \mu \in \Acal, \;\int_0^\infty [\mu(t_j,u) - \mu(0,u)] du= \alpha_j,\; 1 \leq j \leq n \Big\} = \frac{\sqrt{2\pi}}{4\chi(\rho)} \alphabf^TA^{-1} \alphabf,
	\end{equation}
	where $A = A_{_{\{t_j\}_{j=1}^n}}$.
\end{proposition}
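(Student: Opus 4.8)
The plan is to recast the constrained minimization over $\Acal$ as a minimum‑norm problem in a Hilbert space and then to evaluate the relevant Gram matrix through explicit Gaussian integrals. By Lemma~\ref{lem:mu property-1} and the discussion around \eqref{mu formula}, the assignment $(\psi,H)\mapsto\mu$ (with $\mu$ the weak solution of \eqref{mu PDE}) is a bijection from $L^2(\R)\times\mathcal{H}^1$ onto $\{\mu:\Qcal_T(\mu)<\infty\}$ under which $\Qcal_T(\mu)=\tfrac12\|(\psi,H)\|_V^2$, where $\|(\psi,H)\|_V^2:=\chi(\rho)^{-1}\|\psi\|_{L^2(\R)}^2+\chi(\rho)\,[H,H]$, and under which $\Acal$ corresponds to the subspace $D\subset V$ of pairs with $\psi\in\Ccal^2_c(\R)$ and $H\in\Ccal^{1,2}_c([0,T]\times\R)$, which is dense in $V$. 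Identifying $\mathcal{H}^1$ isometrically with $L^2([0,T]\times\R)$ through $H\mapsto g:=\partial_u H$ — here one checks $\{\partial_u H:H\in\Ccal^{1,2}_c\}$ is dense in $L^2([0,T]\times\R)$, since any $f$ with $\int_0^T\int_\R f\,\partial_uH=0$ for all such $H$ is constant in $u$ at a.e.\ time and hence zero — Lemma~\ref{lem:mu property-2} together with \eqref{pre3} and the membership $p_s\in L^2([0,t]\times\R)$ shows that, for each $1\le j\le n$, the constraint
\[
L_j(\psi,g):=\int_0^\infty\!\big[\mu(t_j,u)-\mu(0,u)\big]\,du=\langle\psi,\varphi_j\rangle_{L^2(\R)}+\langle g,\widetilde G_j\rangle_{L^2([0,T]\times\R)}
\]
defines a bounded linear functional on $V$, with $\varphi_j(v)=-\tfrac12\int_0^{t_j}p_s'(v)\,ds$ and $\widetilde G_j(s,v)=\chi(\rho)\,\mathbf 1_{\{s<t_j\}}\,p_{t_j-s}(v)$.

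I would then use the standard fact that, for bounded linear functionals $L_1,\dots,L_n$ on a Hilbert space with Riesz representatives $r_1,\dots,r_n$ and Gram matrix $\Gamma=(\langle r_j,r_k\rangle_V)_{j,k}$, one has $\inf\{\tfrac12\|v\|_V^2:v\in V,\ L_j(v)=\alpha_j\ \forall j\}=\tfrac12\,\alphabf^T\Gamma^{-1}\alphabf$, the minimizer being $v^\ast=\sum_k c_k r_k$ with $\Gamma c=\alphabf$. To pass from the infimum over $V$ to the infimum over $D$ (i.e.\ over $\mu\in\Acal$), the inequality ``$\ge$'' is immediate; for ``$\le$'' I would fix pairs $w_1,\dots,w_n\in D$ with $(L_i(w_j))_{i,j}$ invertible — possible since $(L_1,\dots,L_n):V\to\R^n$ is onto (it has invertible Gram matrix, established below) and $D$ is dense — approximate $v^\ast$ by a sequence in $D$, and correct each term by the unique linear combination of the $w_j$'s restoring all $n$ constraints exactly; the corrections vanish in $V$, so $\tfrac12\|\cdot\|_V^2$ along the corrected sequence converges to $\tfrac12\alphabf^T\Gamma^{-1}\alphabf$.

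It then remains to compute $\Gamma$. The Riesz representative of $L_j$ is $r_j=(\chi(\rho)\varphi_j,\ \chi(\rho)^{-1}\widetilde G_j)$, so $\Gamma_{jk}=\chi(\rho)\langle\varphi_j,\varphi_k\rangle_{L^2(\R)}+\chi(\rho)^{-1}\langle\widetilde G_j,\widetilde G_k\rangle_{L^2([0,T]\times\R)}$. Using the Gaussian identities $\int_\R p_s'(v)p_r'(v)\,dv=\big(\sqrt{2\pi}\,(s+r)^{3/2}\big)^{-1}$ and $\int_\R p_a(v)p_b(v)\,dv=\big(\sqrt{2\pi(a+b)}\big)^{-1}$, followed by elementary one-dimensional integrations, I expect
\[
\langle\varphi_j,\varphi_k\rangle_{L^2(\R)}=\frac{\sqrt{t_j}+\sqrt{t_k}-\sqrt{t_j+t_k}}{\sqrt{2\pi}},\qquad
\langle\widetilde G_j,\widetilde G_k\rangle_{L^2([0,T]\times\R)}=\frac{\chi(\rho)^2}{\sqrt{2\pi}}\Big(\sqrt{t_j+t_k}-\sqrt{|t_j-t_k|}\,\Big),
\]
so that the two $\sqrt{t_j+t_k}$ contributions cancel in $\Gamma_{jk}$ and
\[
\Gamma_{jk}=\frac{\chi(\rho)}{\sqrt{2\pi}}\Big(\sqrt{t_j}+\sqrt{t_k}-\sqrt{|t_j-t_k|}\,\Big)=\sqrt{\tfrac{2}{\pi}}\,\chi(\rho)\,a(t_j,t_k)=\sigma_J^2\,a(t_j,t_k),
\]
i.e.\ $\Gamma=\sigma_J^2 A$. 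This matrix is positive definite: the $\widetilde G_j$ are linearly independent, as one sees by successively letting $s\uparrow t_n$, $s\uparrow t_{n-1}$, and so on. Hence $A$ is invertible and the desired infimum equals $\tfrac12\alphabf^T(\sigma_J^2 A)^{-1}\alphabf=\frac{1}{2\sigma_J^2}\alphabf^TA^{-1}\alphabf=\frac{\sqrt{2\pi}}{4\chi(\rho)}\alphabf^TA^{-1}\alphabf$, as claimed.

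The part I expect to be most delicate is not any individual Gaussian integral, but the passage from the infimum over all of $V$ to the infimum over the smooth class $\Acal$ — the correction argument must be run carefully — together with the verification that $H\mapsto\partial_u H$ maps $\mathcal{H}^1$ \emph{onto} the whole of $L^2([0,T]\times\R)$; were this not the case, an orthogonal projection would intervene in the Gram-matrix computation and could spoil the clean cancellation of the $\sqrt{t_j+t_k}$ terms.
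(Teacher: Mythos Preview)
Your argument is correct and is genuinely different from the paper's proof. Both approaches ultimately reduce the problem to a finite-dimensional quadratic form, but the route is quite different.

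The paper works in the derived variables $(\mu_0,K)$ via formula \eqref{q_mu}, solves the Euler--Lagrange equations in Fourier space for the one-constraint problem (Lemmas~\ref{lem:Q_1} and~\ref{lem:Q_2}), then argues by calculus of variations (Appendix~\ref{appendix:k_nmu_n}) that the $n$-constraint minimizer is a linear combination of the single-constraint minimizers, and finally computes the cost matrix $q_{jk}$ by a lengthy Fourier calculation (Appendix~\ref{appendix:pf_q_jk}). You instead work in the primitive variables $(\psi,g)$ with $g=\partial_uH$, recognize $\Qcal_T$ as half a Hilbert norm, read off the constraint functionals directly from Lemma~\ref{lem:mu property-2}, and compute the Gram matrix by three one-line Gaussian integrals in physical space. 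The cancellation of the $\sqrt{t_j+t_k}$ terms, which in the paper emerges only after two pages of Fourier computation, is transparent in your decomposition into the $\varphi$- and $\widetilde G$-contributions.

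Your approach is shorter and more elementary; the paper's buys explicit Fourier formulas for the minimizers themselves (properties (P1)--(P2) and \eqref{current_fourier}), which are later exploited in Section~\ref{section proofs} to verify that $\mu_{t_n}^n(t,\cdot)\in L^1(\R)$. Your minimizer $v^\ast=\sum_kc_kr_k$ is equally explicit, but lives in $V$ rather than in $\Acal$; this is harmless for the present proposition, and the density-plus-correction argument you sketch for passing from $V$ down to $\Acal$ is exactly the right way to close the gap. The surjectivity of $H\mapsto\partial_uH$ onto $L^2([0,T]\times\R)$ that you flag is indeed needed and your duality argument for it is fine.
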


We first prove Proposition \ref{prop:rateF_varF_2} from the last result, and then prove the last proposition in the next subsection. 

\begin{proof}[Proof of Proposition \ref{prop:rateF_varF_2}]
By Proposition \ref{prop:Q_finiteD},
\[		\inf \Big\{\Qcal_T (\mu): \int_0^\infty [\mu(t_j,u) - \mu(0,u)] du= \alpha_j,\; 1 \leq j \leq n \Big\}  \leq \frac{\sqrt{2\pi}}{4\chi(\rho)} \alphabf^TA^{-1} \alphabf,\]
and it remains to prove the opposite direction of the last inequality. 
Take $\mu$ such that $\Qcal_T (\mu) < \infty$  and 
\[\int_0^\infty [\mu(t_j,u) - \mu(0,u)] du= \alpha_j,\; \forall 1 \leq j \leq n.\]
Then, by Lemma \ref{lem:mu property-1}, there exist $\psi \in L^2 (\R)$ and $H \in \mathcal{H}^1$ such that $\mu$ is the unique weak solution to the PDE \eqref{mu PDE}. For $\varepsilon > 0$, let $\psi^\varepsilon \in \Ccal^2_c (\R)$ and $H^\varepsilon \in \Ccal_c^{1,2} ([0,T] \times \R)$ such that
\[\psi^{\varepsilon} \rightarrow \psi\quad \text{in}\; L^2 (\R), \quad \text{and}\quad H^\varepsilon \rightarrow H\quad \text{in}\; \mathcal{H}^1, \quad \;\varepsilon \rightarrow 0.\]
Denote by $\mu^\varepsilon$ the solution of the PDE \eqref{mu PDE} associated with $\psi^\varepsilon$ and $H^\varepsilon$.  Finally, let 
\[ \alpha_j^\varepsilon := \int_0^\infty [\mu^\varepsilon(t_j,u) - \mu^\varepsilon (0,u)] du, \quad \forall 1 \leq j \leq n.\]
By Lemmas \ref{lem:mu property-1} and \ref{lem:mu property-2},
\[\lim_{\varepsilon \rightarrow 0} \Qcal_T (\mu^\varepsilon) = \Qcal_T (\mu), \quad \lim_{\varepsilon \rightarrow 0}  \alpha_j^\varepsilon = \alpha_j, \quad \forall 1 \leq j \leq n.\]
Thus, 
\begin{multline*}
\Qcal_T (\mu) = \lim_{\varepsilon \rightarrow 0} \Qcal_T (\mu^\varepsilon)\\
  \geq \limsup_{\varepsilon \rightarrow 0} 	\inf \Big\{\Qcal_T (\mu): \mu \in \Acal, \;\int_0^\infty [\mu(t_j,u) - \mu(0,u)] du= \alpha^\varepsilon_j,\; 1 \leq j \leq n \Big\} \\
  = \limsup_{\varepsilon \rightarrow 0}  \frac{\sqrt{2\pi}}{4\chi(\rho)} \alphabf_\varepsilon^TA^{-1} \alphabf_\varepsilon = \frac{\sqrt{2\pi}}{4\chi(\rho)} \alphabf^TA^{-1} \alphabf,
\end{multline*}
where $\alphabf_\varepsilon = (\alpha_1^\varepsilon,\alpha_2^\varepsilon, \ldots, \alpha_n^\varepsilon)^T$. We conclude the proof by taking the infimum over $\mu$ on the left-hand side of the last inequality.
\end{proof}

\subsection{Proof of Proposition \ref{prop:Q_finiteD}} The proof of Proposition \ref{prop:Q_finiteD} is based on Fourier approach and is divided into several lemmas. Observe that if $\mu \in \Acal$, then by \eqref{mu_K},
\[K(t,0) = \int_0^\infty [\mu(t,u) - \mu (0,u)] du, \quad t > 0,\]
which allows us to reduce the constraints inside the infimum in \eqref{Q_n_A} to $K(t_j,0) = \alpha_j$ for $1 \leq j \leq n$. Obviously, we also have $K(0,u) = 0$ for any $u \in \R$ by the definition of $K(t,u)$.

First, we consider the case $n=1$ and $K(T,0) = \alpha$ for some $\alpha \in \R$. The following result was proved in \cite{xue2023moderate} by constructing the minimizer of the variational problem directly. Here, we present a different approach, which allows us to generalize it to the case $n > 1$. 

\begin{lemma}\label{lem:Q_1} Fix $\alpha \in \R$.  Then,
	\begin{equation}\label{Q_0}
		\inf \{\Qcal_T (\mu): \mu \in \Acal, \;K(0,\cdot) = 0, \;K (T,0)=\alpha\} = \frac{\sqrt{2\pi} \alpha^2}{4 \chi (\rho) \sqrt{T}}. 
	\end{equation}
\end{lemma}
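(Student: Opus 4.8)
The plan is to optimize the formula \eqref{q_mu} for $\chi(\rho)\Qcal_T(\mu)$ over $\mu\in\Acal$ subject to the single scalar constraint $K(T,0)=\alpha$ (recall $K(0,\cdot)=0$ is automatic), using the Fourier transform in the space variable $u$. Write $\widehat{K}(t,\xi)=\int_\R e^{-i\xi u}K(t,u)\,du$ and $\widehat{\mu_0}(\xi)=\int_\R e^{-i\xi u}\mu_0(u)\,du$. Since the constraint is the single number $K(T,0)=\widehat K(T,0)$, it only constrains the $\xi=0$ behavior; but the functional couples $\mu_0$ and $K$, and $\mu_0$ appears without a constraint, so the first step is to minimize over $\mu_0$ for fixed $K$. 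From \eqref{q_mu}, collecting the $\mu_0$-dependent terms, the dependence on $\widehat{\mu_0}(\xi)$ is quadratic for each frequency $\xi$; completing the square in $\widehat{\mu_0}(\xi)$ gives the pointwise-in-$\xi$ minimizer $\widehat{\mu_0}(\xi)$ as an explicit linear expression in $\{\widehat K(t,\xi)\}_t$, and substituting back yields a reduced functional depending only on $\widehat K(\cdot,\xi)$. I expect this reduced functional, after using Plancherel, to take the clean form
\[
\chi(\rho)\Qcal_T(\mu)\;\geq\;\frac{1}{2}\int_\R\int_0^T \big|\partial_t\widehat K(t,\xi)\big|^2\,dt\,\frac{d\xi}{2\pi}\;+\;(\text{a positive boundary/quadratic term in }\widehat K),
\]
with equality attainable, so that the infimum decouples across frequencies $\xi$.

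Next I would solve the resulting one-dimensional (in $t$) calculus-of-variations problem at each frequency. For $\xi\neq 0$ the constraint $K(T,0)=\alpha$ imposes nothing, so the optimal $\widehat K(t,\xi)\equiv 0$ there (the functional is nonnegative and vanishes at $\widehat K\equiv0$); hence only the $\xi=0$ mode contributes — more precisely, the optimal $K$ concentrates its mass so that only $\widehat K(\cdot,0)=\int_\R K(t,u)\,du$ is nonzero in the limit, and by a density/approximation argument within $\Acal$ the infimum equals the infimum of the reduced $\xi=0$ functional over scalar functions $t\mapsto k(t):=\widehat K(t,0)$ with $k(0)=0$, $k(T)=\alpha$. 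That reduced problem should read $\inf\{\frac12\int_0^T \dot k(t)^2\,dt + c\cdot(\text{boundary terms}) : k(0)=0,\ k(T)=\alpha\}$; I expect the boundary terms to combine (via the $\mu_0$-elimination) into something that, together with $\frac12\int_0^T\dot k^2$, makes the minimizer a specific profile — plausibly $k(t)=\alpha\sqrt{t}/\sqrt{T}$ or a closely related square-root profile dictated by the Hurst-$1/4$ scaling — giving minimal value $\dfrac{\sqrt{2\pi}\,\alpha^2}{4\sqrt T}$ for $\chi(\rho)\Qcal_T$, i.e. the claimed right-hand side after dividing by $\chi(\rho)$.

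For the matching upper bound I would exhibit an explicit near-optimizer: take $\mu_0(u)=\psi(u)$ and $H$ (hence $K$) from the profile found above, mollified and cut off to land in $\Acal$ (i.e. $\psi\in\Ccal^2_c(\R)$, $H\in\Ccal^{1,2}_c([0,T]\times\R)$), with $K(t,\cdot)$ chosen so that $\int_\R K(t,u)\,du\to k(t)$ and $K(t,0)\to\alpha$ at $t=T$; Lemmas \ref{lem:mu property-1} and \ref{lem:mu property-2} guarantee that $\Qcal_T$ and the constraint functional converge appropriately, so the infimum over $\Acal$ is at most the target value. The main obstacle I anticipate is bookkeeping in the Fourier elimination of $\mu_0$ from \eqref{q_mu}: the cross term $-\mu_0(u)\partial_u K(T,u)$ and the term $\frac18\int(\partial_u\mu_0-\partial_u^2 K)^2$ must be combined correctly so that, after minimizing in $\widehat{\mu_0}(\xi)$, the residual functional is manifestly nonnegative and has the minimizer one expects; getting the constant $\sqrt{2\pi}/4$ exactly right (it comes from a Gaussian integral $\int_0^T\!\! ds\,(\ldots)$ producing the $\sqrt T$ and the $\sqrt{2\pi}$) is where an arithmetic slip is most likely, so I would cross-check the final constant against the $n=1$ case of the known result in \cite{xue2023moderate}.
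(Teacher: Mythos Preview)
Your proposal contains a genuine error at its core. You write that ``the constraint is the single number $K(T,0)=\widehat K(T,0)$'' and that ``it only constrains the $\xi=0$ behavior.'' This is false: with your convention $\widehat K(t,\xi)=\int_\R e^{-i\xi u}K(t,u)\,du$, the quantity $\widehat K(T,0)$ equals $\int_\R K(T,u)\,du$, not the point value $K(T,0)$. The correct Fourier expression of the constraint is
\[
K(T,0)=\frac{1}{2\pi}\int_\R \widehat K(T,\xi)\,d\xi=\alpha,
\]
which couples \emph{all} frequencies through a single scalar relation. Consequently your claim that ``for $\xi\neq 0$ the constraint imposes nothing, so the optimal $\widehat K(t,\xi)\equiv 0$ there'' is wrong, and the optimizer is not concentrated at $\xi=0$. (A quick sanity check: your proposed $\xi=0$ reduced problem with minimizer $k(t)=\alpha\sqrt{t/T}$ gives $\int_0^T \dot k(t)^2\,dt=+\infty$, while the linear minimizer of $\tfrac12\int_0^T\dot k^2$ gives $\alpha^2/(2T)$, neither of which matches the target $\sqrt{2\pi}\alpha^2/(4\sqrt T)$.)

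The paper's proof has the same overall flavor (Parseval and frequency-by-frequency analysis) but handles the constraint correctly. It first fixes the whole terminal profile $K(T,\cdot)$, solves the Euler--Lagrange equations for $\mu_0$ and $\{K(t,\cdot)\}_{0<t<T}$ in Fourier variables, and obtains the reduced functional
\[
\chi(\rho)\Qcal_T(\mu_0^{T,\alpha},K^{T,\alpha})=\int_\R \frac{\xi^2 e^{T\xi^2/2}}{4\,[e^{T\xi^2/2}-1]}\,\bigl|\Fcal K^{T,\alpha}(T,\xi)\bigr|^2\,d\xi.
\]
The final step is a constrained minimization of this weighted $L^2$-norm subject to $\frac{1}{\sqrt{2\pi}}\int_\R \Fcal K^{T,\alpha}(T,\xi)\,d\xi=\alpha$; by Cauchy--Schwarz (or a Lagrange multiplier) the optimal terminal profile is $\Fcal K^{T,\alpha}(T,\xi)=\alpha(1-e^{-T\xi^2/2})/(\sqrt T\,\xi^2)$, spread over all $\xi$, and the Gaussian integral $\int_\R \xi^{-2}(1-e^{-T\xi^2/2})\,d\xi=\sqrt{2\pi T}$ produces the constant $\sqrt{2\pi}/(4\sqrt T)$. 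Your idea of first eliminating $\mu_0$ by completing the square is fine and could be carried through, but after that you must treat the constraint as a linear functional on $\widehat K(T,\cdot)$ over all $\xi$, not as a condition at the single frequency $\xi=0$.
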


\begin{proof} The proof is divided into several steps:
	
	\medspace
	
	\emph{Step 1.} We first minimize $\Qcal_T  (\mu_0,K)$ subject to $K(0,\cdot) = 0$ and $K(T,\cdot)$ being a known function. We claim that  the infimum is attained at the point $(K^{T,\alpha} ,\mu^{T,\alpha} _0)$ such that
	\begin{align}
		- \partial_{t}^2 K^{T,\alpha}(t,u) + \frac{1}{4} \partial_{u}^4 K^{T,\alpha} (t,u) - \frac{1}{4} \partial_{u}^3 \mu^{T,\alpha} _0 (u) = 0,\label{K}\\
		-\frac{T}{4} \partial_{u}^2 \mu^{T,\alpha} _0 (u) + \frac{1}{4} \int_0^T \partial_{u}^3 K^{T,\alpha} (t,u) dt + \mu^{T,\alpha} _0 (u) - \frac{1}{2} \partial_{u} K^{T,\alpha} (T,u) = 0.\label{mu_0}
	\end{align}
Indeed, the above equations are obtained by solving, for any $\mu_0$ and $K$ such that $K(0,\cdot) = K(T,\cdot) = 0$, 
\[\frac{d}{d \varepsilon} \Qcal_T (\mu_{0}^{T,\alpha} + \varepsilon \mu_0, K^{T,\alpha} )|_{\varepsilon = 0} = \frac{d}{d \varepsilon} \Qcal_T (\mu_{0}^{T,\alpha}, K^{T,\alpha} + \varepsilon K )|_{\varepsilon = 0}=0.\]

	For any function $f \in \Scal (\R)$, define the Fourier transform of $f$ as
	\[\Fcal f (\xi) = \frac{1}{\sqrt{2\pi}} \int_{\R} e^{iu\xi} f(u) du, \quad \xi \in \R.\]
	Then, $(\Fcal f') (\xi) = -i\xi \Fcal f (\xi), \;\xi \in \R$.   This permits us to  rewrite \eqref{K} and \eqref{mu_0} as
	\begin{align}
		- \partial_{t}^2 \Fcal K^{T,\alpha}(t,\xi) + \frac{\xi^4}{4} \Fcal K^{T,\alpha} (t,\xi) - \frac{i \xi^3}{4} \Fcal \mu^{T,\alpha} _0 (\xi) = 0,\label{K_fourier}\\
		\Big\{1+\frac{T\xi^2}{4}\Big\} \Fcal \mu^{T,\alpha} _0 (\xi)= - i \Big\{ \frac{\xi}{2} \Fcal K^{T,\alpha} (T,\xi) + \frac{\xi^3}{4} \int_0^T  \Fcal K^{T,\alpha} (t,\xi) dt \Big\}.\label{mu_0_fourier}
	\end{align}
	Solving \eqref{K_fourier}, we have
	\begin{equation}\label{K_fourier_1}
		\Fcal K^{T,\alpha} (t,\xi)  = C_1 (\xi) e^{t\xi^2/2} + C_2 (\xi) e^{-t\xi^2/2} + \frac{i \Fcal \mu^{T,\alpha} _0 (\xi)}{\xi}
	\end{equation}
	for two functions $C_1 (\xi)$ and $C_2 (\xi)$, which are determined by the boundary conditions at time $0$ and $T$,
	\begin{equation}\label{boundary_condition}
		\begin{cases}
			C_1 (\xi) + C_2 (\xi) +  \frac{i \Fcal \mu^{T,\alpha} _0 (\xi)}{\xi} = 0,\\
			C_1 (\xi) e^{T\xi^2/2} + C_2 (\xi) e^{-T\xi^2/2} + \frac{i \Fcal \mu^{T,\alpha} _0 (\xi)}{\xi} = \Fcal K^{T,\alpha}  (T,\xi).
		\end{cases}
	\end{equation}
	Inserting \eqref{K_fourier_1} into \eqref{mu_0_fourier}, we have
	\begin{equation}\label{mu_0_fourier_1}
		\Fcal \mu^{T,\alpha} _0 (\xi) = - \frac{i \xi}{2} \Big\{   \Fcal K^{T,\alpha} (T,\xi) + C_1 (\xi) \big[e^{T\xi^2/2} - 1\big] + C_2 (\xi) \big[1- e^{-T\xi^2/2} \big] \Big\}.
	\end{equation}
	Finally, using \eqref{mu_0_fourier_1}, and solving \eqref{boundary_condition}, we have
	\begin{equation*}
		\begin{cases}
			C_1 (\xi) = \frac{1}{2(e^{T\xi^2/2} - 1)} \Fcal K^{T,\alpha}(T,\xi),\\
			C_2 (\xi) = - \frac{e^{T\xi^2/2}}{2(e^{T\xi^2/2} - 1)} \Fcal K^{T,\alpha} (T,\xi).
		\end{cases}
	\end{equation*}
	In conclusion, we have shown that, for $0 \leq t \leq T$,
	\begin{align}
		\Fcal \mu^{T,\alpha} _0 (\xi) &= - \frac{i \xi}{2}  \Fcal K^{T,\alpha}(T,\xi),\label{mu_0_fourier_2}\\
		\Fcal K^{T,\alpha}(t,\xi) &= \frac{1}{2} \Big[ \frac{e^{t\xi^2/2} - e^{(T-t)\xi^2/2}}{e^{T\xi^2/2} - 1} +1 \Big] \Fcal K^{T,\alpha}(T,\xi).\label{K_fourier_2}
	\end{align}

	\medspace
	
	\emph{Step 2.}  By Parseval's identity,
	\begin{multline}\label{parseval}
		\chi (\rho) \Qcal_T (\mu_0,K) = \int_0^T \int_{\R} \Big\{ \frac{1}{2}  \big|\partial_{t} \Fcal K (t,\xi)\big|^2 
		+ \frac{1}{8} \big|- i \xi \Fcal \mu_0 (\xi) + \xi^2  \Fcal K (t,\xi) \big|^2 \Big\} d\xi\,dt\\
		+ \frac{1}{4} \int_{\R} \Big\{ \big| \Fcal \mu_0 (\xi) \big|^2 + \big| \Fcal \mu_0 (\xi)  + i \xi  \Fcal K(T,\xi) \big|^2 \Big\} d \xi.
	\end{multline}
	Inserting  $(K^{T,\alpha},\mu^{T,\alpha} _0)$ into the last expression, and by \eqref{mu_0_fourier_2} and \eqref{K_fourier_2}, 
	\begin{multline}\label{Q_2}
		\chi (\rho) \Qcal_T (\mu^{T,\alpha} _0,K^{T,\alpha}) \\ = \int_0^T \int_{\R} \frac{\xi^4}{32[e^{T\xi^2/2} - 1]^2} |\Fcal K^{T,\alpha} (T,\xi)|^2 
	 \Big[\Big(e^{t\xi^2/2} + e^{(T-t)\xi^2/2}\Big)^2 + \Big(e^{t\xi^2/2} - e^{(T-t)\xi^2/2}\Big)^2\Big] \,d\xi\,dt \\
		+  \int_{\R} \frac{\xi^2}{8} |\Fcal K^{T,\alpha} (T,\xi)|^2\,d\xi
		= \int_{\R} \frac{\xi^2e^{T\xi^2/2}}{4[e^{T\xi^2/2} - 1]}  |\Fcal K^{T,\alpha} (T,\xi)|^2\,d\xi. 
	\end{multline}

	\medspace
	
	\emph{Step 3.} Finally, we need to optimize $\Qcal_T (\mu^{T,\alpha} _0,K^{T,\alpha})$ over $\Fcal K^{T,\alpha} (T,\xi)$. The boundary condition  $K^{T,\alpha} (T,0) = \alpha$ is equivalent to
	\begin{equation}\label{boundary_condition_1}
		\frac{1}{\sqrt{2\pi}} \int_\R \Fcal K^{T,\alpha}(T,\xi) d \xi = \alpha.
	\end{equation}
	Subject to \eqref{boundary_condition_1}, the infimum of  \eqref{Q_2} is attained at the point 
	\[\Fcal K^{T,\alpha}(T,\xi) = \frac{\alpha (1-e^{-T \xi^2/2})}{\sqrt{T} \xi^2},\]
	and equals
	\[(\sqrt{2\pi} \alpha)^2 \Big(\int_{\R} \frac{4 (e^{T\xi^2/2} - 1)}{\xi^2 e^{T\xi^2/2}} \,d\xi\Big)^{-1} = \frac{\sqrt{2\pi} \alpha^2}{4 \sqrt{T}}\]
	by \eqref{integral_formula}. This concludes the proof.
\end{proof}

The next lemma extends the  last result to the case $n = 1$ and $K(t,0) = \alpha$ for $0 < t \leq T$ and $\alpha \in \R$.

\begin{lemma}\label{lem:Q_2} Fix $0 < t \leq T$ and $\alpha \in \R$. Then,
	\begin{equation}\label{Q_1}
		\inf \{\Qcal_T (\mu): \mu \in \Acal, \;K(0,\cdot) = 0, \;K (t,0)=\alpha\} = \frac{\sqrt{2\pi} \alpha^2}{4 \chi (\rho) \sqrt{t}}.
	\end{equation}
\end{lemma}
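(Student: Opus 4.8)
\emph{Proof strategy.} The plan is to reduce Lemma~\ref{lem:Q_2} to Lemma~\ref{lem:Q_1} applied with the time horizon $T$ replaced by $t$. The key observation is that the constraint $K(t,0)=\alpha$, i.e. $\int_0^\infty[\mu(s,u)-\mu(0,u)]\,du=\alpha$ evaluated at $s=t$, depends on the trajectory $\mu$ only through its restriction to $[0,t]$, whereas the cost
\[
\Qcal_T(\mu)=\Qcal_0(\mu_0)+\tfrac{\chi(\rho)}{2}\int_0^T\!\!\int_\R(\partial_u H)^2\,du\,ds
\]
(Lemma~\ref{lem:mu property-1}) can only decrease when the horizon is shrunk, and conversely a trajectory defined on $[0,t]$ can be prolonged to $[0,T]$ at no extra cost by letting it evolve under pure diffusion (forcing $H\equiv0$) on $[t,T]$. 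Hence one expects
\[
\inf\{\Qcal_T(\mu):\mu\in\Acal,\ K(0,\cdot)=0,\ K(t,0)=\alpha\}=\inf\{\Qcal_{[0,t]}(\nu):\nu\in\Acal_{[0,t]},\ K(0,\cdot)=0,\ K(t,0)=\alpha\},
\]
where $\Acal_{[0,t]}$ and $\Qcal_{[0,t]}$ denote the obvious analogues of $\Acal$ and $\Qcal_T$ over $[0,t]$, and the right-hand side equals $\frac{\sqrt{2\pi}\alpha^2}{4\chi(\rho)\sqrt t}$ by Lemma~\ref{lem:Q_1} with horizon $t$. If $\alpha=0$ everything is trivial ($\mu\equiv0$ attains cost $0$), so assume $\alpha\neq0$.

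\emph{Lower bound.} Let $\mu\in\Acal$ with $\Qcal_T(\mu)<\infty$ and $K(t,0)=\alpha$, with forcing $H\in\Ccal^{1,2}_c([0,T]\times\R)$ and $\mu_0=\psi\in\Ccal^2_c(\R)$. Its restriction $\mu|_{[0,t]}$ lies in $\Acal_{[0,t]}$ (restrict $H$), still satisfies $K(t,0)=\alpha$, and
\[
\Qcal_{[0,t]}(\mu|_{[0,t]})=\Qcal_0(\psi)+\tfrac{\chi(\rho)}{2}\int_0^t\!\!\int_\R(\partial_u H)^2\,du\,ds\le\Qcal_T(\mu).
\]
Lemma~\ref{lem:Q_1} applied with horizon $t$ gives $\Qcal_{[0,t]}(\mu|_{[0,t]})\ge\frac{\sqrt{2\pi}\alpha^2}{4\chi(\rho)\sqrt t}$, hence $\Qcal_T(\mu)\ge\frac{\sqrt{2\pi}\alpha^2}{4\chi(\rho)\sqrt t}$; taking the infimum over such $\mu$ yields the lower bound.

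\emph{Upper bound.} Given $\varepsilon>0$, use Lemma~\ref{lem:Q_1} with horizon $t$ to pick $\nu\in\Acal_{[0,t]}$ with $K(t,0)=\alpha$ and $\Qcal_{[0,t]}(\nu)\le\frac{\sqrt{2\pi}\alpha^2}{4\chi(\rho)\sqrt t}+\varepsilon$, with forcing $H\in\Ccal^{1,2}_c([0,t]\times\R)$. Since naively gluing $H$ with $H\equiv0$ on $[t,T]$ would introduce a jump of the forcing at time $t$, I would first modify $H$ near time $t$: for small $\delta>0$ let $\widehat H_\delta\in\Ccal^{1,2}_c([0,t]\times\R)$ agree with $H$ on $[0,t-2\delta]$ and be supported in $[0,t-\delta]\times\R$. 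Because $\widehat H_\delta$ and $H$ differ only on a time-slab of width $2\delta$ on which their $u$-derivatives are uniformly bounded, $[\widehat H_\delta-H,\widehat H_\delta-H]\le C\delta$, so $\Qcal_{[0,t]}(\widehat\nu_\delta)\le\Qcal_{[0,t]}(\nu)+C\delta$, and by the explicit formula of Lemma~\ref{lem:mu property-2} the constraint functional is continuous in $(\psi,\widehat H_\delta)$, whence $K^{\widehat\nu_\delta}(t,0)\to K^\nu(t,0)=\alpha$ as $\delta\to0$. Rescaling $\widehat\nu_\delta$ by the scalar $\alpha/K^{\widehat\nu_\delta}(t,0)\to1$ — legitimate since $\Acal_{[0,t]}$ is stable under scalar multiplication and $\Qcal_{[0,t]}$ is quadratic — restores $K(t,0)=\alpha$ exactly while multiplying the cost by a factor tending to $1$. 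The forcing of the resulting trajectory is still supported in $[0,t-\delta]\times\R$, so extending it by $H\equiv0$ on $[t-\delta,T]$ produces a genuine element of $\Acal$; this element coincides with the rescaled trajectory on $[0,t]$, hence satisfies $K(t,0)=\alpha$, and its $\Qcal_T$ equals the cost accumulated on $[0,t]$. Letting $\delta\to0$ and then $\varepsilon\to0$ gives the upper bound.

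\emph{Main obstacle.} The soft part is the lower bound and the reduction itself; the real work is the bookkeeping in the upper bound, namely keeping the competitor inside $\Acal$ after concatenation. The truncate-and-rescale device above is one way; an equivalent and perhaps cleaner route is to first prove the identity with $\Acal$ replaced by the larger class $\{\mu:\Qcal_T(\mu)<\infty\}$ (where concatenation with pure diffusion is immediate and costs nothing), and then transfer back to $\Acal$ by the mollification-and-rescaling argument already used to pass from Proposition~\ref{prop:Q_finiteD} to Proposition~\ref{prop:rateF_varF_2}, invoking Lemma~\ref{lem:mu property-2} for the convergence of the constraint functionals.
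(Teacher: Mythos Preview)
Your approach is correct and essentially identical to the paper's: both obtain the lower bound by restricting $\mu$ to $[0,t]$ and invoking Lemma~\ref{lem:Q_1} with horizon $t$, and the upper bound by extending the (near-)minimizer of $\Qcal_t$ via pure diffusion ($H\equiv0$) on $[t,T]$. You are actually more careful than the paper about keeping the concatenated trajectory in $\Acal$ (the smoothness of the forcing at the junction $s=t$); the paper simply writes down the glued trajectory $\mu_T^{t,\alpha}$ and computes $\Qcal_T(\mu_T^{t,\alpha})=\Qcal_t(\mu^{t,\alpha})+\int_t^T 0\,ds$ without commenting on this point.
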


\begin{proof}
	For any $t > 0$, let $\mu^{t,\alpha}_T$  be the minimizer of \eqref{Q_1}, which does exist by the following construction.    
	Denote by $H^{t,\alpha}_T$ the function $H$ in \eqref{mu PDE}, respectively $K^{t,\alpha}_T$ the function $K$ in  \eqref{mu_K}, associated with the function $\mu^{t,\alpha}_T$.
	We call $\mu^{t,\alpha}_T$,  or correspondingly $(K_T^{t,\alpha},\mu^{t,\alpha}_{T,0})$, \emph{the minimizer of the rate function $\Qcal_T (\mu)$ with current $\alpha$  at time $t$.} In particular, when $T=t$, 
	\[\mu^{T,\alpha} = \mu^{T,\alpha}_T, \quad  H^{T,\alpha}= H^{T,\alpha}_T, \quad   K^{T,\alpha} = K^{T,\alpha}_T,\]
the precise expressions of which are given through the proof of Lemma \ref{lem:Q_1}.  
 
	We claim the infimum in \eqref{Q_1} is attained at the point $\mu_{T}^{t,\alpha}$ such that
	\begin{enumerate}[(i)]
		\item for $0 \leq s \leq t$, the function $\mu_{T}^{t,\alpha}$ is the unique solution to
		\begin{equation}\label{mu_T_t_alpha_1}
			\begin{cases}
				\partial_s \mu_{T}^{t,\alpha} (s,u) = \frac{1}{2} \partial_u^2 \mu_{T}^{t,\alpha} (s,u) - \chi(\rho) \partial_{u}^2 H^{t,\alpha} (s,u), 		\quad u \in \R,\\
				\mu_T^{t,\alpha} (0,u) = \mu^{t,\alpha}_0 (u), 		\quad u \in \R,
			\end{cases}
		\end{equation}
		where $H^{t,\alpha}$ is the function $H$ associated with the minimizer of the rate function $\Qcal_t (\mu)$ with current $\alpha$  at time $t$;
		\item for $t \leq s \leq T$, 
		the function $\mu_{T}^{t,\alpha}$ is the unique solution to
		\begin{equation}\label{mu_T_t_alpha_2}
			\begin{cases}
				\partial_s \mu_{T}^{t,\alpha} (s,u) = \frac{1}{2} \partial_u^2 \mu_{T}^{t,\alpha} (s,u), 		\quad u \in \R,\\
				\mu_T^{t,\alpha} (t,u) = \mu^{t,\alpha} (t,u), 		\quad u \in \R.
			\end{cases}
		\end{equation}
	\end{enumerate}
	Roughly speaking, the minimizer $\mu_{T}^{t,\alpha}$ evolves as  the minimizer of the rate function $\Qcal_t (\mu)$ with current $\alpha$  at time  $t$ during time interval $[0,t]$, and then evolves according to the hydrodynamic equation of the SSEP after time $t$.   This claim is obvious since
	\[	\mathcal{Q}_{T,dyn}(\mu) = \frac{\chi(\rho)}{2} [H,H],\]
	and thus 
	\[	\inf \{\Qcal_T (\mu): \mu \in \Acal, \;K(0,\cdot) = 0, \;K (t,0)=\alpha\} \geq 	\inf \{\Qcal_t (\mu): \mu \in \Acal, \;K(0,\cdot) = 0, \;K (t,0)=\alpha\}\]
 Moreover, by Lemma \ref{lem:Q_1}, the left-hand side of \eqref{Q_1} equals
	\[  \Qcal_T (\mu_T^{t,\alpha}) =\Qcal_t (\mu^{t,\alpha})+\int_t^T0ds=\frac{\sqrt{2\pi}\alpha^2}{4 \chi(\rho) \sqrt{t}},\]
	which concludes the proof.
\end{proof}

One could check directly that the functions $\mu_{T}^{t,\alpha}$ and $(K_T^{t,\alpha},\mu^{t,\alpha}_{T,0})$ constructed in the last proof satisfy the following properties:
\begin{itemize}
	\item[(P1)] for $0 \leq s \leq t$,
	\begin{align}
		\Fcal \mu_{T,0}^{t,\alpha} (\xi) = - \frac{i \xi}{2} \Fcal K^{t,\alpha} (t,\xi),\\
		\Fcal  K^{t,\alpha}_T (s,\xi) = \frac{1}{2} \Big[\frac{e^{s\xi^2/2} - e^{(t-s)\xi^2/2}}{e^{t\xi^2/2}-1} +1\Big] \Fcal K^{t,\alpha} (t,\xi),
	\end{align}
	where
	\[\Fcal K^{t,\alpha}(t,\xi) = \frac{\alpha (1-e^{-t \xi^2/2})}{\sqrt{t} \xi^2};\]
	\item[(P2)] for $t \leq s \leq T$,
	\begin{equation}
		\Fcal  K^{t,\alpha}_T (s,\xi)  = \frac{1}{2} \big[1+e^{-(s-t)\xi^2/2}\big] \Fcal K^{t,\alpha} (t,\xi).
	\end{equation}
\end{itemize}
Moreover, by direct calculations (see Section \ref{appendix:pf_current_fourier}), for $0 \leq s \leq T$,
\begin{equation}\label{current_fourier}
K^{t,\alpha}_T (s,0) =	\frac{1}{\sqrt{2\pi}} \int_{\R} \Fcal  K^{t,\alpha}_T (s,\xi) d\xi= \frac{\alpha a(t,s)}{\sqrt{t}},
\end{equation}
where $a(t,s) = \frac{1}{2} (\sqrt{t} + \sqrt{s} - \sqrt{|t-s|})$ is the covariance function  of the fractional Brownian motion with Hurst parameter $\frac{1}{4}$.

(P1) is exactly \eqref{mu_0_fourier_2} and \eqref{K_fourier_2}. (P2) follows from \eqref{mu_T_t_alpha_2} and \eqref{mu_K}.  Indeed, by \eqref{mu_T_t_alpha_2}, 
\[\Fcal \mu_{T}^{t,\alpha}  (s,\xi) = \Fcal \mu^{t,\alpha} (t,\xi)  e^{-(s-t)\xi^2/2}, \quad t \leq s \leq T,\] 
and by \eqref{mu_K},
\begin{equation}\label{mu_K_fourier}
	- i \xi \Fcal K^{t,\alpha}_T  (s,\xi) = \Fcal \mu_{T,0}^{t,\alpha} (\xi)  - \Fcal \mu_{T}^{t,\alpha}  (s,\xi), \quad s \geq 0.
\end{equation}
Taking $s = t$ in the last expression, and then using (P1), we obtain
\[\Fcal \mu_{T}^{t,\alpha}  (t,\xi) = \frac{i \xi}{2} \Fcal K^{t,\alpha}  (t,\xi).\]
Therefore, by \eqref{mu_K_fourier}, for $s \geq t$,
\[- i \xi \Fcal K^{t,\alpha}_T  (s,\xi) = - \frac{i \xi}{2} \Fcal K^{t,\alpha} (t,\xi) - \Fcal \mu^{t,\alpha} (t,\xi)  e^{-(s-t)\xi^2/2} = - \frac{i \xi}{2} \big[1+e^{-(s-t)\xi^2/2}\big] \Fcal K^{t,\alpha} (t,\xi),\]
as claimed.

Finally, we are ready to prove Proposition \ref{prop:Q_finiteD}.

\begin{proof}[Proof of Proposition \ref{prop:Q_finiteD}]
	Let \[\mu_T^{n} := \mu_T^{n,\{t_j\}_{j=1}^n,\{\alpha_j\}_{j=1}^n}, \quad (K_T^{n}, \mu_{T,0}^{n}) := (K_T^{n,\{t_j\}_{j=1}^n,\{\alpha_j\}_{j=1}^n}, \mu_{T,0}^{n,\{t_j\}_{j=1}^n,\{\alpha_j\}_{j=1}^n})\] be the minimizer of \eqref{Q_n}.  We claim that
	\begin{equation}\label{k_n_mu_n}
		K_T^{n} = \sum_{j=1}^{n} \beta_j K_T^{t_j,1}, \quad 	\mu_{T,0}^{n} = \sum_{j=1}^{n} \beta_j \mu_{T,0}^{t_j,1}
	\end{equation}
	for some coefficients $\beta_j \in \R$, $1 \leq j \leq n$.  Recall that for $1 \leq j \leq n$, $\big(K_T^{t_j,1}, \mu_{T,0}^{t_j,1}\big)$ is the minimizer of the rate function $\Qcal_T (\mu)$ with current one  at time $t_j$, and is constructed in \eqref{mu_T_t_alpha_1} and \eqref{mu_T_t_alpha_2}. We prove claim \eqref{k_n_mu_n} in Section \ref{appendix:k_nmu_n}. The coefficients $\{\beta_j\}$ satisfy the following constraints
	\begin{equation}\label{equ:constraints}
		\sum_{j=1}^{n} \beta_j K_T^{t_j,1} (t_k,0) = \alpha_k, \quad 1 \leq k \leq n.
	\end{equation}
	By \eqref{current_fourier}, the last line is equivalent to
	\[\sum_{j=1}^{n} \beta_j \frac{a(t_j,t_k)}{\sqrt{t_j}}= \alpha_k, \quad 1 \leq k \leq n.\]
	Let  $D = {\rm diag } (d_j)_{1 \leq j \leq n}$ be the diagonal matrix with $d_j = 1/ \sqrt{t_j}$.  Let  $\betabf=(\beta_1,\ldots,\beta_n)^T$ be the column vector. Then, we have shown that
	\[A D \betabf = \alphabf.\]
	Note that $AD$ is invertible, which ensures the existence and uniqueness of $\betabf$.
	
	Inserting \eqref{k_n_mu_n} into \eqref{parseval}, we have 
	\begin{equation}
		\chi (\rho) \mathcal{Q}_T (\mu_{T,0}^n,K^n_T) = \betabf^T Q \betabf,
	\end{equation}
	where $Q=(q_{jk})_{1 \leq j,k \leq n}$ is the matrix with $q_{jk}$ being the real part of
	\begin{multline*}
		\int_0^T \int_{\R} \Big\{ \frac{1}{2}  \partial_{t} \Fcal K_T^{t_j,1} (t,\xi) \overline{\partial_{t} \Fcal K_T^{t_k,1} (t,\xi)}
		\\
		+ \frac{1}{8} \big[- i \xi \Fcal \mu_{T,0}^{t_j,1} (\xi) + \xi^2  \Fcal K_T^{t_j,1} (t,\xi) \big] \overline{\big[- i \xi \Fcal \mu_{T,0}^{t_k,1} (\xi) + \xi^2  \Fcal K_T^{t_k,1} (t,\xi) \big]} \Big\} d\xi\,dt\\
		+ \frac{1}{4} \int_{\R} \Big\{\Fcal \mu_{T,0}^{t_j,1} (\xi) \overline{\Fcal \mu_{T,0}^{t_k,1} (\xi)} \\
		+ \big[\Fcal \mu_{T,0}^{t_j,1}  (\xi)  + i \xi  \Fcal K_{T}^{t_j,1} (T,\xi) \big] \overline{\big[\Fcal \mu_{T,0}^{t_k,1}  (\xi)  + i \xi  \Fcal K_{T}^{t_k,1} (T,\xi) \big]}\Big\} d \xi.
	\end{multline*}
	After a long computation (see Section \ref{appendix:pf_q_jk}), we have
	\begin{equation}\label{q_jk}
		q_{jk} = \frac{\sqrt{2 \pi}}{4 \sqrt{t_jt_k}} a_{jk}.
	\end{equation}
	Equivalently,
	\[Q = \frac{\sqrt{2\pi}}{4} D A D.\]
	Therefore, 
	\[	\chi (\rho) \mathcal{Q}_T (\mu_{T,0}^n,K^n_T) =  \frac{\sqrt{2\pi}}{4} \alphabf^TA^{-1} \alphabf,\]
	which concludes the proof.
\end{proof}

\section{Exponential tightness}\label{sec:exp tight}

In this section, we prove the sequences of the two processes \[\{J_{-1,0}(tN^2)/a_N: 0 \leq t \leq T\}_{N \geq 1} \quad  \text{and} \quad  \{X (tN^2)/a_N: 0 \leq t \leq T\}_{N \geq 1}\] are exponentially tight.

\subsection{A uniform super-exponential estimate} We first state a super-exponential estimate for the space average of the current. The following result was proved by the same authors in \cite{xue2023moderate} without the supremum over time inside the probability.

\begin{lemma}\label{lem:superexponential_estimate}
	For any $\delta > 0$,
	\begin{equation}\label{uniform_see}
		\limsup_{n \rightarrow \infty}\,\limsup_{N \rightarrow \infty} \frac{N}{a_N^2} \log \P \Big(\sup_{0 \leq t \leq T}\Big|\frac{1}{nNa_N} \sum_{x=0}^{nN-1} J_{x,x+1} (tN^2)\Big| > \delta \Big) = - \infty
	\end{equation}
under both $\P=\P_{\nu_\rho}$ and $\P=\P_{\nu_\rho^*}$.
\end{lemma}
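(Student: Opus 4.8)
The plan is to reduce the supremum-in-time estimate to the fixed-time estimate already established in \cite{xue2023moderate} via a standard chaining/discretization argument, using the martingale structure of the current together with Doob's maximal inequality in its exponential form. Write $S_t^{n,N} := \frac{1}{nNa_N} \sum_{x=0}^{nN-1} J_{x,x+1}(tN^2)$. First I would recall that for each fixed bond $(x,x+1)$, the process $J_{x,x+1}(tN^2)$ is a compensated-plus-compensator decomposition: $J_{x,x+1}(tN^2) = M_{x}^N(t) + \int_0^{tN^2} \tfrac{1}{2}(\eta_s(x) - \eta_s(x+1))\,ds$, where $M_x^N$ is a martingale. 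Summing telescopes the compensator part, so that $\sum_{x=0}^{nN-1} J_{x,x+1}(tN^2)$ differs from a martingale $\mathcal{M}^{n,N}(t)$ only by a term controlled by $\int_0^{tN^2}(\eta_s(0)-\eta_s(nN))\,ds$, whose absolute value is at most $tN^2$; after dividing by $nNa_N$ this contributes $O(TN/(na_N))$, which is $o(\delta)$ once $n$ is large (using $a_N \gg \sqrt{N}$, in fact $N/a_N \to 0$ is not quite enough — but $N/(na_N) \to 0$ for fixed $n$ as $N\to\infty$, and then $n\to\infty$ is harmless), so this boundary term is negligible in the iterated limit.

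Next I would handle the martingale part $\mathcal{M}^{n,N}(t)$. The key tool is the exponential Chebyshev/Doob inequality: for any $\theta > 0$, $\exp\{\theta \mathcal{M}^{n,N}(t) - (\text{compensator of the exponential})\}$ is a supermartingale, hence for a suitable choice of $\theta$ proportional to $a_N^2/(nNa_N \cdot N) = a_N/(nN^2)$ — matched to the target scale $\delta \cdot nNa_N$ and the decay rate $a_N^2/N$ — one gets $\P(\sup_{0\le t\le T}\mathcal{M}^{n,N}(t) > \delta nNa_N) \le \exp\{-\theta \delta n N a_N + \Lambda_N\}$, where $\Lambda_N$ bounds the logarithm of the compensator of the exponential martingale over $[0,TN^2]$. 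The quadratic variation of $\mathcal{M}^{n,N}$ per unit microscopic time is of order $nN$ (a sum of $O(nN)$ bond rates, each $O(1)$), so $\Lambda_N = O(\theta^2 \cdot nN \cdot TN^2)$; plugging in $\theta \asymp a_N/(nN^2)$ gives $\theta\delta nNa_N \asymp \delta a_N^2/N$ and $\Lambda_N \asymp T a_N^2/(nN)$, so that $\frac{N}{a_N^2}\log\P(\cdots) \le -c\delta + C T/n$, and letting first $N\to\infty$ then $n\to\infty$ yields $-\infty$ after finally sending $\delta$... no: $\delta$ is fixed, so we actually need the bound to be $-c\delta\cdot g(n)$ with $g(n)\to\infty$, or to iterate the exponential inequality at many scales. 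The clean fix is to optimize $\theta$: taking $\theta = \kappa a_N/(nN^2)$ with $\kappa$ free, the exponent is $-\kappa\delta a_N^2/N + C\kappa^2 T a_N^2/(nN)$, optimized at $\kappa \asymp \delta n/(CT)$, giving exponent $\asymp -\delta^2 n/(CT)\cdot a_N^2/N$; then $\frac{N}{a_N^2}\log\P \le -c\delta^2 n/T \to -\infty$ as $n\to\infty$. This is exactly the iterated-limit structure in \eqref{uniform_see}.

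The remaining point is to make the exponential-martingale estimate uniform over $[0,T]$ without a time discretization, which is why I invoke the continuous-time Doob maximal inequality for the nonnegative supermartingale $\exp\{\theta\mathcal{M}^{n,N}(t) - (\text{its compensator})\}$ directly, rather than discretizing; this avoids any mesh-refinement loss. One must check the exponential moment of the jump sizes is finite: each jump of $J_{x,x+1}$ is $\pm 1$, so the Lévy-type compensator of $e^{\theta J}$ is $\tfrac12(e^{\theta}+e^{-\theta}-2)$ per bond per unit time, which for $\theta\to 0$ is $\asymp \theta^2/2$, legitimizing the quadratic bound above since $\theta = \kappa a_N/(nN^2) \to 0$. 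I should also note that the same bound under $\P_{\nu_\rho^*}$ follows either by redoing the argument (the generator is the same, only the initial law differs and the martingale estimate is initial-law-robust) or by the coupling in Remark \ref{rmk:coupling}, which changes the current through a single bond by at most a bounded amount over all time, hence changes $S_t^{n,N}$ by $O(1/(nNa_N)) = o(\delta)$.

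The main obstacle I expect is bookkeeping the telescoping boundary term and the precise exponential-martingale bound so that the two limits in \eqref{uniform_see} genuinely produce $-\infty$: one has to arrange the free parameter $\kappa$ (equivalently, the tilting strength) to scale with $n$ in just the right way so that the resulting rate behaves like $-c\delta^2 n/T$. Everything else — the Dynkin martingale decomposition, computing the exponential compensator, Doob's inequality — is routine, and the cited fixed-time estimate of \cite{xue2023moderate} can be quoted to shortcut the single-time tail if one prefers to first discretize time into $O(N^2)$... but the direct supermartingale route is cleaner and I would take it.
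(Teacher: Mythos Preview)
Your martingale estimate is essentially correct and matches the paper's treatment: the exponential supermartingale for $\mathcal{M}^{n,N}$ together with Doob's inequality and the optimization over the tilt $\theta=\kappa a_N/(nN^2)$ does give a bound of order $-c\delta^2 n/T$ on the rate, which is exactly the mechanism that produces $-\infty$ in the iterated limit.

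The gap is in the compensator (boundary) term. You write that
\[
\frac{1}{nNa_N}\Big|\tfrac12\int_0^{tN^2}\big(\eta_s(0)-\eta_s(nN)\big)\,ds\Big|\;\le\;\frac{TN}{2na_N}
\]
and then claim this is $o(\delta)$. This is false: the standing assumption is $a_N\ll N$, so $N/a_N\to\infty$, and for every fixed $n$ the right-hand side diverges as $N\to\infty$. (Your parenthetical ``$N/(na_N)\to 0$ for fixed $n$'' has the inequality the wrong way round.) The telescoped compensator is therefore \emph{not} deterministically negligible at the scale $a_N$; it is only super-exponentially small in probability, and proving that is the real content of the lemma.

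The paper handles this term by an exponential Chebyshev/Feynman--Kac argument: one needs
\[
\limsup_{n\to\infty}\limsup_{N\to\infty}\max_{0\le k\le TN}\frac{N}{a_N^2}\log\E_{\nu_\rho}\Big[\exp\Big\{\frac{a_NK}{nN^2}\int_0^{kN}\big(\eta_s(0)-\eta_s(nN)\big)\,ds\Big\}\Big]=0,
\]
which follows from the variational bound for the largest eigenvalue of $\gen+V$ (this is exactly the fixed-time estimate of \cite{xue2023moderate} that you mention at the end). To pass from $\sup_{0\le t\le T}$ to the discrete maximum over $k$ one discretizes time into $O(N)$ pieces; the union bound then costs a factor $\log N$ in the exponent, and it is precisely here that the hypothesis $a_N\gg\sqrt{N\log N}$ (not just $a_N\gg\sqrt{N}$) is used. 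So the fixed-time input you proposed to bypass is in fact essential, and your ``direct supermartingale route'' only disposes of the martingale half of the decomposition.
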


\begin{proof}
By Remark \ref{rmk:coupling}, we only need to discuss the case where $\P=\P_{\nu_\rho}$.  By Markov's inequality, for any $K >0$, the expression in \eqref{uniform_see} is bounded by
	\begin{equation}
	-\delta K + \frac{N}{a_N^2} \log \E_{\nu_\rho} \Big[\exp \Big\{ \sup_{0 \leq t \leq T}\Big|\frac{a_N K}{nN^2} \sum_{x=0}^{nN-1} J_{x,x+1} (tN^2)\Big| \Big\}\Big].
	\end{equation}
Since $K$ could be taken arbitrarily large, we only need to prove that, for any fixed $K > 0$,
\[	\limsup_{n \rightarrow \infty}\,\limsup_{N \rightarrow \infty} \frac{N}{a_N^2} \log \E_{\nu_\rho} \Big[\exp \Big\{ \sup_{0 \leq t \leq T}\Big|\frac{a_N K}{nN^2} \sum_{x=0}^{nN-1} J_{x,x+1} (tN^2)\Big| \Big\}\Big] = 0.\]
By using the inequality \[\exp\{\sup_{0 \leq t \leq T} |x_t| \}\leq \exp\{\sup_{0 \leq t \leq T} x_t \}+\exp\{ \sup_{0 \leq t \leq T} (-x_t) \}\] for any trajectory $\{x_t\}$, and $\log (a+b) \leq \log 2 + \max\{\log a, \log b\}$ for any $a,b > 0$,  without loss of generality, it suffices to prove that, for any fixed $K >0$, 
\begin{equation}\label{uniform_see1}
\limsup_{n \rightarrow \infty}\,\limsup_{N \rightarrow \infty} \frac{N}{a_N^2} \log \E_{\nu_\rho} \Big[\exp \Big\{ \sup_{0 \leq t \leq T} \frac{a_N K}{nN^2} \sum_{x=0}^{nN-1} J_{x,x+1} (tN^2)  \Big\}\Big] = 0.
\end{equation}

Since for each $x \in \Z$, $\{J_{x,x+1} (t)\}$ is a compound Poisson process with intensity \[\frac{1}{2}\int_0^t (\eta_s(x) - \eta_s (x+1)) ds,\] and there are no common jumps between those compound Poisson processes, 
\[M^{N,1}_t = \exp \Big\{ \frac{ a_N K}{nN^2} \sum_{x=0}^{nN-1} J_{x,x+1} (tN^2)  -  \Gamma^{N,1}_{n,K} (t)\Big\}\]
is a mean-one exponential martingale, where
\[\Gamma^{N,1}_{n,K} (t) = \frac{1}{2} \sum_{x=0}^{nN-1} \Big(e^{a_N K/(nN^2)} - 1\Big) \int_0^{tN^2} (\eta_s (x) - \eta_s (x+1)) ds.\]
Then, by Cauchy-Schwarz inequality, the expectation in \eqref{uniform_see1} is bounded by
\begin{multline*}
\E_{\nu_\rho} \Big[\sup_{0 \leq t \leq T} \{ M_t^{N,1}\} \exp \Big\{ \sup_{0 \leq t \leq T}  \Gamma^{N,1}_{n,K} (t)  \Big\}\Big] \\
\leq \E_{\nu_\rho} \Big[\sup_{0 \leq t \leq T}  (M_t^{N,1})^2\Big]^{1/2} \E_{\nu_\rho} \Big[ \exp \Big\{ \sup_{0 \leq t \leq T}  2 \Gamma^{N,1}_{n,K} (t)  \Big\}\Big]^{1/2}.
\end{multline*}
As a consequence, the expression in \eqref{uniform_see1} is bounded by
\[\frac{N}{2a_N^2} \log \E_{\nu_\rho} \Big[\sup_{0 \leq t \leq T}  (M_t^{N,1})^2\Big] + \frac{N}{2a_N^2} \log  \E_{\nu_\rho} \Big[ \exp \Big\{ \sup_{0 \leq t \leq T}  2 \Gamma^{N,1}_{n,K} (t)  \Big\}\Big].\]

For the martingale term in the last line, we have
\begin{multline*}
	(M_t^{N,1})^2 = \exp \Big\{ \frac{1}{2}\Big[ \frac{ 4 a_N K}{nN^2} \sum_{x=0}^{nN-1} J_{x,x+1} (tN^2)  - 2 \Gamma^{N,2}_{n,K} (t) \Big]+ \Gamma^{N,2}_{n,K} (t)  -  2 \Gamma^{N,1}_{n,K} (t)\Big\}\\
	=: 	(M_t^{N,2})^{1/2} \exp \Big\{  \Gamma^{N,2}_{n,K} (t)  -  2 \Gamma^{N,1}_{n,K} (t)\Big\},
\end{multline*}
where
\[2 \Gamma^{N,2}_{n,K} (t) = \frac{1}{2} \sum_{x=0}^{nN-1} \Big(e^{4 a_N K/(nN^2)} - 1\Big) \int_0^{tN^2} (\eta_s (x) - \eta_s (x+1)) ds,\]
and 
\[M_t^{N,2} = \exp \Big\{ \frac{ 4 a_N K}{nN^2} \sum_{x=0}^{nN-1} J_{x,x+1} (tN^2)  - 2 \Gamma^{N,2}_{n,K} (t) \Big\} \]
is also a martingale. By Doob's inequality and Cauchy-Schwarz inequality,
\begin{multline}\label{uniform_see2}
\frac{N}{a_N^2} \log \E_{\nu_\rho} \Big[\sup_{0 \leq t \leq T}  (M_t^{N,1})^2\Big]  \leq \frac{N \log 4}{ a_N^2} + \frac{N}{a_N^2} \log \E_{\nu_\rho} \Big[ (M_T^{N,1})^2\Big]\\
\leq \frac{N \log 4}{a_N^2} +  \frac{N}{2 a_N^2} \log \E_{\nu_\rho} \Big[ M_T^{N,2}\Big] + \frac{N}{2 a_N^2} \log \E_{\nu_\rho} \Big[\exp \Big\{ 2 \Gamma^{N,2}_{n,K} (T)  -  4 \Gamma^{N,1}_{n,K} (T)\Big\} \Big].
\end{multline}
Note that the second term on the right-hand side of \eqref{uniform_see2} is zero. Since $a_N \gg \sqrt{N}$, the first term on the right-hand side of \eqref{uniform_see2} converges to zero as $N \rightarrow \infty$.  Therefore, to prove \eqref{uniform_see1}, we only need to prove
\begin{equation}
\limsup_{n \rightarrow \infty}\,\limsup_{N \rightarrow \infty}  \frac{N}{2a_N^2} \log  \E_{\nu_\rho} \Big[ \exp \Big\{ \sup_{0 \leq t \leq T}  2 \Gamma^{N,1}_{n,K} (t)  \Big\}\Big] = 0,
\end{equation}
since the third term on the right-hand side of \eqref{uniform_see2} could be treated in the same way.

Since $|e^x - 1 -x | \leq (x^2/2) e^{|x|}$, 
\[ \Gamma^{N,1}_{n,K} (t) \leq \frac{a_N K}{2 nN^2} \int_0^{tN^2} (\eta_s (0) - \eta_s (nN)) ds +  \frac{C_{N,n} ta_N^2 K^2}{n^2 N^2},\]
where $C_{N,n}$ is uniformly bounded in $N$ and $n$. Thus, we are left to show 
\begin{equation}\label{uniform_see3}
	\limsup_{n \rightarrow \infty}\,\limsup_{N \rightarrow \infty}  \frac{N}{a_N^2} \log  \E_{\nu_\rho} \Big[ \exp \Big\{ \sup_{0 \leq t \leq T} \frac{ a_N K}{nN^2} \int_0^{tN^2} (\eta_s (0) - \eta_s (nN)) ds  \Big\}\Big] = 0.
\end{equation}
Since 
\[\sup_{0 \leq t \leq T} \frac{ a_N K}{nN^2} \int_0^{tN^2} (\eta_s (0) - \eta_s (nN)) ds \leq \sup_{0 \leq k \leq TN} \frac{ a_N K}{nN^2} \int_0^{kN} (\eta_s (0) - \eta_s (nN)) ds + \frac{a_NK}{nN},\]
where the supremum on the right-hand side is over integers $k$, the left-hand side in \eqref{uniform_see3} is bounded by
\[\limsup_{n \rightarrow \infty}\,\limsup_{N \rightarrow \infty}  \frac{N}{a_N^2} \log  \E_{\nu_\rho} \Big[ \exp \Big\{ \sup_{0 \leq k \leq TN} \frac{a_N K}{nN^2} \int_0^{kN} (\eta_s (0) - \eta_s (nN)) ds  \Big\}\Big].\]
Using the inequality
\[\log \E [\exp \{\sup_{0 \leq k \leq TN} X_k\}] \leq \log \E \Big[\sum_{k=0}^{TN} e^{X_k} \Big] \leq \log (TN+1)  + \max_{0 \leq k \leq TN} \log \E [e^{X_k}],\]
and since $a_N \gg \sqrt{N \log N}$, we only need to prove
\begin{equation}\label{uniform_see4}
\limsup_{n \rightarrow \infty}\,\limsup_{N \rightarrow \infty}  \max_{0 \leq k \leq TN} \frac{N}{a_N^2} \log  \E_{\nu_\rho} \Big[ \exp \Big\{  \frac{a_N K}{nN^2} \int_0^{kN} (\eta_s (0) - \eta_s (nN)) ds  \Big\}\Big] = 0,
\end{equation}
which has been shown in the proof of \cite[Lemma 4.1]{xue2023moderate}.  We also refer the readers to Section \ref{app_pf_uniform_see4} for details. This concludes the proof.
\end{proof}

\subsection{Exponential tightness of the current}  To show the sequence of processes $\{J_{-1,0}(tN^2)/a_N: 0 \leq t \leq T\}_{N \geq 1}$ is exponentially tight, we only need to prove the following estimates for the current.

\begin{lemma}\label{lem:current exponential tight}
We have the following estimates for the current under both $\P=\P_{\nu_\rho}$ and $\P=\P_{\nu_\rho^*}$:
\begin{enumerate}
	\item for any fixed $T > 0$,
	\begin{equation}\label{et1}
		\limsup_{M \rightarrow \infty} \limsup_{N \rightarrow \infty} \frac{N}{a_N^2} \log \P \Big(\sup_{0 \leq t \leq T}  \frac{1}{a_N} \big|  J_{-1,0} (tN^2)\big| > M\Big) = - \infty;
	\end{equation}
\item for any $\varepsilon > 0$,
\begin{equation}\label{et2}
	\limsup_{\delta \rightarrow 0} \limsup_{N \rightarrow \infty}  \sup_{\tau \in \mathcal{T}_T} \frac{N}{a_N^2} \log \P \Big(\sup_{0< t \leq \delta} \frac{1}{a_N} \big|  J_{-1,0} ((t+\tau)N^2) - J_{-1,0} (\tau N^2)\big| > \varepsilon\Big) = - \infty,
\end{equation}
where $\mathcal{T}_T$ is the set of all stopping times bounded by $T$.
\end{enumerate}
\end{lemma}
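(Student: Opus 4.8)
The plan is to verify the two exponential--tightness criteria \eqref{et1} and \eqref{et2} by reducing a single--bond current to a box--averaged current plus an empirical--measure functional, and then invoking Lemma \ref{lem:superexponential_estimate} for the former and Theorem \ref{thm:mdphl} for the latter. By Remark \ref{rmk:coupling} and since Lemma \ref{lem:superexponential_estimate} already holds under both $\P_{\nu_\rho}$ and $\P_{\nu_\rho^*}$, it will suffice to work under $\P_{\nu_\rho}$. The algebraic input is the conservation law $J_{-1,0}(s)=J_{x,x+1}(s)+\sum_{y=0}^{x}[\eta_s(y)-\eta_0(y)]$, valid for each $x\ge0$; averaging over $x\in\{0,\dots,nN-1\}$ and dividing by $a_N$ yields, for every $n\ge1$,
\[
\frac{1}{a_N}J_{-1,0}(tN^2)=\frac{1}{nNa_N}\sum_{x=0}^{nN-1}J_{x,x+1}(tN^2)+\langle\mu^N_t-\mu^N_0,g_n\rangle,\qquad g_n(u):=\Big(1-\frac{u}{n}\Big)\mathbf{1}_{[0,n)}(u).
\]
The first summand on the right is exactly the object controlled, uniformly in time, by Lemma \ref{lem:superexponential_estimate}.

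For the second summand I will use two deterministic facts about any $\mu$ with $\Qcal_T(\mu)<\infty$. Although $g_n$ is not a Schwartz function (it jumps at the origin), the explicit representation \eqref{mu formula}, Cauchy--Schwarz and Lemma \ref{lem:mu property-1} give, exactly as in Remark \ref{rmk:goodness of Qcal}, a constant $C_n$ such that the maps $t\mapsto\langle\mu_t-\mu_0,g_n\rangle$ are, uniformly over $\mathbb{L}_L=\{\Qcal_T\le L\}$, bounded by $C_n\sqrt{L}$ and equicontinuous in time; the only new feature relative to Remark \ref{rmk:goodness of Qcal} is that the jump of $g_n$ at $0$ produces, after integration by parts, a harmless extra term of the form $\chi(\rho)\int_0^t\langle p_{t-s},\partial_vH(s,\cdot)\rangle\,ds$, which is finite because $\int_0^t\|p_{t-s}\|_{L^2(\R)}^2\,ds<\infty$ and is controlled by $\sqrt{\Qcal_{T,dyn}(\mu)}$. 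Combined with the upper bound of Theorem \ref{thm:mdphl}, which gives $\limsup_N\frac{N}{a_N^2}\log\P_{\nu_\rho}(\mu^N\notin\mathbb{L}_L)\le-L$ for every $L$, these are all the ingredients.

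To obtain \eqref{et1}, fix $L$, let $n=n(M)\to\infty$ as $M\to\infty$ (say $n(M)=M$), and split $\{\sup_t\frac{1}{a_N}|J_{-1,0}(tN^2)|>M\}$ along the decomposition above: for $M$ large the box--averaged part is handled by Lemma \ref{lem:superexponential_estimate} (with threshold $1$), contributing a bound $\le-L$, while the $g_n$--part is contained in $\{\mu^N\notin\mathbb{L}_{L'}\}$ as soon as $C_{n(M)}\sqrt{L'}<M/2$, so $L'=L'(M)\to\infty$ is admissible and that part contributes $\le-L'(M)\to-\infty$; sending $M\to\infty$ and then $L\to\infty$ gives \eqref{et1}. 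To obtain \eqref{et2}, fix $\varepsilon>0$ and $L$, and choose $n=n(L)$ large enough that Lemma \ref{lem:superexponential_estimate} (applied with time horizon $T+1$) controls the box--averaged increment through the crude, $\tau$--free bound
\[
\sup_{0<t\le\delta}\Big|\frac{1}{nNa_N}\sum_{x=0}^{nN-1}\big(J_{x,x+1}((t+\tau)N^2)-J_{x,x+1}(\tau N^2)\big)\Big|\le\frac{2}{nNa_N}\sup_{0\le s\le T+1}\Big|\sum_{x=0}^{nN-1}J_{x,x+1}(sN^2)\Big| ;
\]
for the $g_n$--increment, the uniform modulus of continuity on $\mathbb{L}_L$ forces $\{\sup_{0<t\le\delta}|\langle\mu^N_{\tau+t}-\mu^N_\tau,g_n\rangle|>\varepsilon/2\}\subseteq\{\mu^N\notin\mathbb{L}_L\}$ once $\delta$ is small enough, uniformly over $\tau\in\mathcal{T}_T$. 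Letting $\delta\to0$, then $N\to\infty$, then $L\to\infty$ yields \eqref{et2}.

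The main obstacle is that a single--bond current admits no useful pathwise bound: its compensator $\frac12\int_0^{tN^2}(\eta_s(-1)-\eta_s(0))\,ds$ is of size $\sim N\sqrt{t}\gg a_N$, so the cancellation between current and compensator cannot be thrown away, and one is forced through the conservation law — which is precisely why the supremum--over--time strengthening of the super--exponential estimate in Lemma \ref{lem:superexponential_estimate}, absent from \cite{xue2023moderate}, is indispensable here. The secondary difficulty, uniformity over the stopping times in \eqref{et2}, dissolves once one notices that the increment of the box average is dominated by twice its running supremum over a fixed time window (removing the $\tau$--dependence), while the empirical--measure functional is pinned down by the $\tau$--independent modulus of continuity valid on the compact level sets $\mathbb{L}_L$; the remaining care needed is the non--smoothness of $g_n$, which only forces one to re--run the elementary estimates of Remark \ref{rmk:goodness of Qcal} from \eqref{mu formula} rather than from integration by parts against a test function.
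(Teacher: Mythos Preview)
Your decomposition coincides with the paper's identity \eqref{et2.2}, and the treatment of the box--averaged current via Lemma~\ref{lem:superexponential_estimate} is the same. The gap is in your handling of the empirical--measure term $\langle\mu^N_t-\mu^N_0,g_n\rangle$.

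The claim ``the upper bound of Theorem~\ref{thm:mdphl} gives $\limsup_N\frac{N}{a_N^2}\log\P_{\nu_\rho}(\mu^N\notin\mathbb{L}_L)\le-L$'' is false. The empirical measure $\mu^N$ is, for each $N$, a purely atomic signed measure supported on $\frac{1}{N}\Z$; by Lemma~\ref{lem:mu property-1} any $\mu$ with $\Qcal_T(\mu)<\infty$ has $\mu_0\in L^2(\R)$, so $\Qcal_T(\mu^N)=+\infty$ almost surely and $\P_{\nu_\rho}(\mu^N\notin\mathbb{L}_L)=1$. The upper bound in Theorem~\ref{thm:mdphl} only applies to \emph{closed} sets, and $\mathbb{L}_L^c$ is open; exponential tightness from a good--rate--function LDP produces \emph{some} compact sets, not the level sets themselves. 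Consequently the inclusions you write, e.g.\ $\{\sup_t|\langle\mu^N_t-\mu^N_0,g_n\rangle|>M/2\}\subseteq\{\mu^N\notin\mathbb{L}_{L'}\}$, are vacuously true and yield nothing. The deeper obstruction is that $g_n\notin\Scal(\R)$, so the functional $\mu\mapsto\langle\mu_t,g_n\rangle$ is neither defined nor continuous on $\Dcal([0,T],\Scal'(\R))$; the uniform bounds and moduli of continuity you derive on $\mathbb{L}_L$ (which are correct statements about nice $\mu$'s) cannot be transferred to $\mu^N$ through the MDP.

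The paper confronts exactly this point: it replaces $G_n$ by a Schwartz approximation $\tilde G_n$, invokes \cite[Lemma~3.2]{gao2003moderate} to obtain \eqref{et3} and \eqref{et5} for $\tilde G_n$, and then proves \eqref{smooth out G_n}, namely that $\sup_t|\langle\mu^N_t,G_n-\tilde G_n\rangle|$ is super--exponentially small. That last estimate is the real content of the lemma and is not soft: it requires discretising time at scale $N^{-3}$, using the stirring representation $\{\xi^y_t\}$ to write increments of $\langle\mu^N_\cdot,\delta_n\rangle$ as sums over labelled walks, and a case analysis ($|y|>3Nn$ versus $|y|\le 3Nn$, and on the event $A_N$ versus $A_N^c$) that hinges on the support and jump structure of $\delta_n=G_n-\tilde G_n$. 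Your remark that the discontinuity of $g_n$ ``only forces one to re--run the elementary estimates of Remark~\ref{rmk:goodness of Qcal}'' understates the difficulty: those estimates live on the limit space, whereas what is needed is a uniform--in--time probabilistic bound on the prelimit $\mu^N$ against a non--smooth test function, and this must be proved directly.
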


\begin{proof}
As in the last lemma, we  only need to deal with the case where $\P=\P_{\nu_\rho}$.
For any $n > 0$, introduce the function $G_n: \R \rightarrow \R$ as
\[G_n (u) = (1-u/n)^+ \mathbf{1} \{ u \geq 0\}, \quad u \in \R. \]
Then, one could check directly that, for any $n > 0$,
 \begin{equation}\label{et2.2}
 \frac{1}{a_N} J_{-1,0} (tN^2) = \<\mu^N_t,G_n\> - \<\mu^N_0,G_n\>  + \frac{1}{nNa_N} \sum_{x=0}^{nN-1} J_{x,x+1} (tN^2).
 \end{equation}
Thus, we only need to show that the estimates in the lemma hold respectively for the terms on the right-hand side of the last line, \emph{i.e.}, 
\begin{align}
	&\limsup_{n \rightarrow \infty}	\limsup_{M \rightarrow \infty} \limsup_{N \rightarrow \infty} \frac{N}{a_N^2} \log \P_{\nu_\rho} \Big(\sup_{0 \leq t \leq T} \Big|  \frac{1}{nNa_N} \sum_{x=0}^{nN-1} J_{x,x+1} (tN^2)   \Big| > M\Big) = - \infty,\label{et4}\\
		&\limsup_{n \rightarrow \infty}		\limsup_{\delta \rightarrow 0} \limsup_{N \rightarrow \infty}  \sup_{\tau \in \mathcal{T}_T} \notag\\
	&\quad \frac{N}{a_N^2} \log \P_{\nu_\rho} \Big(\sup_{0 \leq t \leq \delta} \Big|  \frac{1}{nNa_N} \sum_{x=0}^{nN-1} [J_{x,x+1} ((t+\tau)N^2)  - J_{x,x+1} (\tau N^2)]  \Big| > \varepsilon \Big) = - \infty,\label{et6}
\end{align}
and
\begin{align}
\limsup_{n \rightarrow \infty}	\limsup_{M \rightarrow \infty} \limsup_{N \rightarrow \infty} \frac{N}{a_N^2} \log \P_{\nu_\rho} \Big(\sup_{0 \leq t \leq T} \big| \<\mu^N_t,G_n\>   \big| > M\Big) &= - \infty,\label{et3}\\
\limsup_{n \rightarrow \infty} \limsup_{\delta \rightarrow 0} \limsup_{N \rightarrow \infty}  \sup_{\tau \in \mathcal{T}_T} \frac{N}{a_N^2} \log \P_{\nu_\rho} \Big(\sup_{0 \leq t \leq \delta} \big| \<\mu^N_{t+\tau} - \mu^N_\tau,G_n\>   \big| > \varepsilon \Big) &= - \infty. \label{et5}
\end{align}

Recall that \eqref{et4} has been proved in Lemma \ref{lem:superexponential_estimate}.  Since the probability in \eqref{et6} is bounded by 
\[ 2 \P_{\nu_\rho} \Big(\sup_{0 \leq t \leq T} \Big|  \frac{1}{nNa_N} \sum_{x=0}^{nN-1} J_{x,x+1} (tN^2)   \Big| > \varepsilon/2\Big),\]
the estimate in \eqref{et6} also follows directly from Lemma \ref{lem:superexponential_estimate}.

The estimates in  \eqref{et3}  and \eqref{et5} have been proved in \cite[Lemma 3.2]{gao2003moderate} for Schwartz test functions. Thus, we only need to find $\tilde{G}_n \in \Scal (\R)$ such that, for any $\varepsilon > 0$, 
\begin{equation}\label{smooth out G_n}
	\limsup_{n \rightarrow \infty}	\limsup_{N \rightarrow \infty} \frac{N}{a_N^2} \log \P_{\nu_\rho} \Big(\sup_{0 \leq t \leq T} \big| \<\mu^N_t,G_n - \widetilde{G}_n\>   \big| > \varepsilon\Big) = - \infty.
\end{equation}
We underline that this is not obvious since we only have the priori bound 
\[\big| \<\mu^N_t,G_n - \widetilde{G}_n\>   \big| \leq C(n) \frac{N}{a_N}\]
and note that $a_N \ll N$. To this end, we choose $\tilde{G}_n \in \Scal (\R)$ such that 
\[|\delta_n (u)| := |G_n (u) - \tilde{G}_n (u)| \leq C n^{-1}, \quad \forall u \in \R,\]
and that the support of $\delta_n (\cdot)$ is contained in $[-2n,2n]$. For $0 \leq i \leq N^3$, let $t_i = i T / N^{3}$. Then, the probability in \eqref{smooth out G_n} is bounded by
\begin{equation}\label{et13}
	\P_{\nu_\rho} \Big(\sup_{0 \leq i \leq N^3} \big| \<\mu^N_{t_i},\delta_n\>   \big| > \varepsilon/2\Big) 
	+ \P_{\nu_\rho} \Big(\sup_{0 \leq t \leq T} \big| \<\mu^N_t,\delta_n\>   \big|  - \sup_{0 \leq i \leq N^3} \big| \<\mu^N_{t_i},\delta_n\>   \big| > \varepsilon/2\Big).
\end{equation}

For the first term in the last line, by stationary of the process and since $a_N \gg \sqrt{N \log N}$,
\begin{multline*}
	\limsup_{n \rightarrow \infty}	\limsup_{N \rightarrow \infty} \frac{N}{a_N^2} \log 	\P_{\nu_\rho} \Big(\sup_{0 \leq i \leq N^3} \big| \<\mu^N_{t_i},\delta_n\>   \big| > \varepsilon/2\Big)\\
	 \leq 	\limsup_{n \rightarrow \infty}	\limsup_{N \rightarrow \infty} \frac{N}{a_N^2} \log 	\P_{\nu_\rho} \Big( \big| \<\mu^N_{0},\delta_n\>   \big| > \varepsilon/2\Big) \\
	\leq - \frac{A\varepsilon}{2} + \limsup_{n \rightarrow \infty}	\limsup_{N \rightarrow \infty} \frac{N}{a_N^2} \log 	\E_{\nu_\rho} \Big[ \exp \Big\{  \frac{a_N A}{N} \big| \sum_{x \in \Z} \bar{\eta}_0 (x) \delta_n (x/N) \big| \Big\}  \Big].
\end{multline*}
As before, we could remove the absolute value inside the exponential in the last line. Since $\nu_\rho$ is a product measure, and by Taylor's expansion up to second order, we bound the last line by
\begin{multline*}
- \frac{A\varepsilon}{2}  +  \limsup_{n \rightarrow \infty}	\limsup_{N \rightarrow \infty} \frac{N}{a_N^2} \sum_{x \in \Z} \log \E_{\nu_\rho} \Big[ \exp \Big\{  \frac{a_N A}{N} \bar{\eta}_0 (x) \delta_n (x/N) \Big\}  \Big]\\
\leq - \frac{A\varepsilon}{2}  +  \limsup_{n \rightarrow \infty}	\limsup_{N \rightarrow \infty} \frac{C A^2}{N}  \sum_{x \in \Z} \delta_n^2 (x/N)  = - \frac{A\varepsilon}{2}.
\end{multline*}
Since $A$ could be arbitrarily large, 
\[	\limsup_{n \rightarrow \infty}	\limsup_{N \rightarrow \infty} \frac{N}{a_N^2} \log 	\P_{\nu_\rho} \Big(\sup_{0 \leq i \leq N^3} \big| \<\mu^N_{t_i},\delta_n\>   \big| > \varepsilon/2\Big) = - \infty.\]

It remains to bound the second term in \eqref{et13}. Before that, we first recall the stirring representation for the symmetric exclusion process. Initially, we put a particle at each site of $\Z$.  Those particles are distinguishable.  For each bond $(x,x+1)$, we have a Poisson process with parameter $\tfrac{1}{2}$, and assume those Poisson processes are independent.  When the Poisson clock associated with the bond $(x,x+1)$ rings, we  exchange the particles at sites $x$ and $x+1$.  Let $\xi_t^x$ be the position at time $t$ of the particle initially at site $x$. For $\eta \in \Omega$, let
\[\eta_t (x) = 1 \quad \text{if and only if} \quad  \text{$\xi_t^y = x$ for some $y \in \Z$ such that $\eta(y) = 1$}.\]
Then, $\{\eta_t\}$ is a version of the SSEP with initial configuration $\eta$.  Note that for each $x$, $\{\xi_t^x\}$ is a continuous-time simple random walk with jump rate one. Using this representation, for any $s,t \geq  0$, we have
\[\<\mu^N_t - \mu^N_s,\delta_n\> = \frac{1}{a_N} \sum_{y \in \Z} \Big[\delta_n \Big(\frac{\xi^y_{tN^2}}{N}\Big)  - \delta_n \Big(\frac{\xi^y_{sN^2}}{N}\Big)\Big] \eta_0 (y).\]
Thus, we bound the second term in \eqref{et13} by 
\begin{multline*}
 \P_{\nu_\rho} \Big(\sup_{0 \leq t \leq T} \big| \<\mu^N_t,\delta_n\>   \big|  - \sup_{0 \leq i \leq N^3} \big| \<\mu^N_{t_i},\delta_n\>   \big| > \varepsilon/2\Big) \\
	\leq   \P_{\nu_\rho} \Big(\sup_{0 \leq i \leq N^3} \sup_{t_i \leq t \leq t_{i+1}} \big| \<\mu^N_t - \mu^N_{t_i},\delta_n\>   \big|   > \varepsilon/2\Big) \leq N^3  \P_{\nu_\rho} \Big( \sup_{0 \leq t \leq t_{1}} \big| \<\mu^N_t - \mu^N_{0},\delta_n\>   \big|   > \varepsilon/2\Big) \\
	= N^3  \P_{\nu_\rho} \Big( \sup_{0 \leq t \leq t_{1}} \Big| \frac{1}{a_N} \sum_{y \in \Z} \Big[\delta_n \Big(\frac{\xi^y_{tN^2}}{N}\Big)  - \delta_n \Big(\frac{y}{N}\Big)\Big] \eta_0 (y)  \Big|   > \varepsilon/2\Big).
\end{multline*}
To conclude the proof, it is sufficient to show that, for any $n > 0$, 
\begin{equation}\label{et14}
\limsup_{N \rightarrow \infty} \frac{N}{a_N^2} \log  \P_{\nu_\rho} \Big( \sup_{0 \leq t \leq t_{1}} \Big| \frac{1}{a_N} \sum_{y \in \Z} \Big[\delta_n \Big(\frac{\xi^y_{tN^2}}{N}\Big)  - \delta_n \Big(\frac{y}{N}\Big)\Big] \eta_0 (y)  \Big|   > \varepsilon/2\Big) = -\infty.
\end{equation}
For the above sum over $y \in \Z$, we consider the two cases  $|y| >  3 N n$ and $|y| \leq  3 Nn$ respectively. 

For the case $|y| > 3 N n$, $\delta_n (y/N) = 0$ since the support of $\delta_n$ is contained in $[-2n,2n]$. Then, we  bound 
\begin{multline*}
	 \P_{\nu_\rho} \Big( \sup_{0 \leq t \leq t_{1}} \Big| \frac{1}{a_N} \sum_{|y| > 3 Nn} \delta_n \Big(\frac{\xi^y_{tN^2}}{N}\Big)  \eta_0 (y)  \Big|   > \varepsilon/2\Big) 
	  \leq \P_{\nu_\rho} \Big( \sup_{0 \leq t \leq t_{1}} |\xi^y_{tN^2}| \leq 2 Nn \quad \text{for some $|y| > 3 Nn$}\Big)\\
	 \leq   \sum_{|y| > 3Nn}  \P_{\nu_\rho} \Big( \sup_{0 \leq t \leq t_{1}} |\xi^y_{tN^2} - y| \geq |y|- 2 Nn \Big).
\end{multline*}
Since for each $y$, $|\xi^y_{tN^2} - y|$ is stochastically bounded by a Poisson process with parameter $tN^2$, and recall $t_1 = T/N^3$, the last line is bound by 
\[ \sum_{|y| > 3Nn}  e^{-c(|y|-2Nn)} \leq e^{-cNn}\]
for some constant $c > 0$.  Therefore,
\[\limsup_{N \rightarrow \infty} \frac{N}{a_N^2} \log   \P_{\nu_\rho} \Big( \sup_{0 \leq t \leq t_{1}} \Big| \frac{1}{a_N} \sum_{|y| > 3 Nn} \delta_n \Big(\frac{\xi^y_{tN^2}}{N}\Big)  \eta_0 (y)  \Big|   > \varepsilon/2\Big) = - \infty.\]

It remains to bound
\begin{multline}\label{et15}
 \P_{\nu_\rho} \Big( \sup_{0 \leq t \leq t_{1}} \Big| \frac{1}{a_N} \sum_{|y| \leq 3 Nn} \Big[\delta_n \Big(\frac{\xi^y_{tN^2}}{N}\Big)  - \delta_n \Big(\frac{y}{N}\Big)\Big] \eta_0 (y)  \Big|   > \varepsilon/2\Big) \\
 \leq \P_{\nu_\rho} (A_N^c) +  \P_{\nu_\rho} \Big( \Big\{\sup_{0 \leq t \leq t_{1}} \Big| \frac{1}{a_N} \sum_{|y| \leq 3 Nn} \Big[\delta_n \Big(\frac{\xi^y_{tN^2}}{N}\Big)  - \delta_n \Big(\frac{y}{N}\Big)\Big] \eta_0 (y)  \Big|   > \varepsilon/2 \Big\} \cap A_N \Big),
\end{multline}
where
\[A_N = \Big\{  \sup_{|y| \leq 3 Nn} \sup_{0 \leq t \leq t_1} |\xi^y_{tN^2} - y| \leq a_N^{3/2} N^{-1/2}\Big\}.\]
Standard large deviation estimates yield
\[\P_{\nu_\rho} (A_N^c) \leq 7 Nn e^{-c a_N^{3/2} N^{-1/2}}\]
for some constant $c > 0$, and thus
\[\limsup_{N \rightarrow \infty} \frac{N}{a_N^2} \log  \P_{\nu_\rho} (A_N^c) = - \infty.\]
We claim that the event inside the second probability on the right-hand side of \eqref{et15} is  empty for $N$ large enough, which is sufficient to conclude the proof.  Indeed, first note that $\delta_n$ is only discontinuous at the origin. Moreover, on the event $A_N$, for each $0 \leq t \leq t_1$, the cardinality of \[B_{N,t} := \{|y| \leq 3Nn: y \geq 0, \xi^y_{tN^2} < 0 \;  \text{or}  \; y < 0, \xi^y_{tN^2} \geq 0\}\] is bounded by $2 a_N^{3/2} N^{-1/2}$.  Thus, on the event $A_N$,
\[\sup_{0 \leq t \leq t_{1}} \Big| \frac{1}{a_N} \sum_{y \in B_{N,t}} \Big[\delta_n \Big(\frac{\xi^y_{tN^2}}{N}\Big)  - \delta_n \Big(\frac{y}{N}\Big)\Big] \eta_0 (y)  \Big|  \leq \frac{4}{n} \sqrt{\frac{a_N}{N}},\]
which goes to zero as $N \rightarrow \infty$. By the piecewise smoothness of $\delta_n$, 
\[\sup_{0 \leq t \leq t_{1}} \Big| \frac{1}{a_N} \sum_{|y| \leq 3Nn, y \notin B_{N,t}} \Big[\delta_n \Big(\frac{\xi^y_{tN^2}}{N}\Big)  - \delta_n \Big(\frac{y}{N}\Big)\Big] \eta_0 (y)  \Big|  \leq C(n) \sqrt{\frac{a_N}{N}},\]
which also vanishes as $N \rightarrow \infty$, thus concluding the proof.
\end{proof}

\subsection{Exponential tightness of the tagged particle} As in the last subsection, if follows from the following estimates that the sequence of processes $\{X (tN^2)/a_N: 0 \leq t \leq T\}_{N \geq 1}$ is exponentially tight.

\begin{lemma}\label{lem:tagged particle exponential tight}
	We have the following estimates for the replacement of the tagged particle:
	\begin{enumerate}
		\item for any fixed $T > 0$,
		\begin{equation}\label{et7}
			\limsup_{M \rightarrow \infty} \limsup_{N \rightarrow \infty} \frac{N}{a_N^2} \log \P_{\nu_\rho^*} \Big(\sup_{0 \leq t \leq T}  \frac{1}{a_N} \big|  X (tN^2)\big| > M\Big) = - \infty;
		\end{equation}
		\item for any $\varepsilon > 0$,
		\begin{equation}\label{et8}
			\limsup_{\delta \rightarrow 0} \limsup_{N \rightarrow \infty}  \sup_{\tau \in \mathcal{T}_T} \frac{N}{a_N^2} \log \P_{\nu_\rho^*} \Big(\sup_{0< t \leq \delta} \frac{1}{a_N} \big|  X ((t+\tau)N^2) - X(\tau N^2)\big| > \varepsilon\Big) = - \infty,
		\end{equation}
		where $\mathcal{T}_T$ is the set of all stopping times bounded by $T$.
	\end{enumerate}
\end{lemma}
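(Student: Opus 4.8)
The plan is to transfer the exponential tightness of the current, already obtained in Lemma~\ref{lem:current exponential tight}, to the tagged particle via the conservation‑law identity linking $X$ to the current, together with a uniform super‑exponential bound on the density fluctuations. All the current estimates, as well as the empirical‑measure estimates of \cite{gao2003moderate}, hold under $\P_{\nu_\rho^*}$ by Remark~\ref{rmk:coupling}.

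\emph{The basic identity.} Since the particles keep their relative order and the number of particles strictly to the right of the tagged particle is conserved, one has, $\P_{\nu_\rho^*}$‑almost surely for every $t\ge 0$,
\[
\big|\rho\,X(t)-J_{-1,0}(t)\big|\le 1+|\Sigma(t)|,\qquad \Sigma(t):=\sum_{y\in\mathcal I_{X(t)}}\bar\eta_t(y),\ \ \bar\eta:=\eta-\rho,
\]
where $\mathcal I_k=\{0,1,\dots,k\}$ for $k\ge 0$ and $\mathcal I_k=\{k+1,\dots,-1\}$ for $k<0$, so $|\mathcal I_k|\le|k|+1$. This follows from the exact relation $J_{-1,0}(t)+1=\sum_{y\in\mathcal I_{X(t)}}\eta_t(y)$ when $X(t)\ge 0$ (respectively $J_{-1,0}(t)+1=-\sum_{y\in\mathcal I_{X(t)}}\eta_t(y)$ when $X(t)<0$), itself obtained by combining $J_{x,x+1}(t)-J_{-1,0}(t)=\sum_{z=0}^{x}[\eta_t(z)-\eta_0(z)]$ (and its analogue for $x<-1$) with $J_{X(t),X(t)+1}(t)=N_0(0)-N_0(X(t))$, $N_0(z):=\sum_{w>z}\eta_0(w)$. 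Since $\{J_{-1,0}(tN^2)/a_N\}$ is exponentially tight by Lemma~\ref{lem:current exponential tight}, everything reduces to controlling $\Sigma(tN^2)$; the only difficulty is that its summation range $\mathcal I_{X(tN^2)}$ is random.

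\emph{A uniform density‑fluctuation estimate.} The key ingredient is that, under both $\P_{\nu_\rho}$ and $\P_{\nu_\rho^*}$, for every fixed $M,\varepsilon>0$,
\[
\limsup_{N\rightarrow\infty}\frac{N}{a_N^2}\log\P\Big(\sup_{0\le t\le T}\max_{|k|\le Ma_N}\Big|\sum_{y\in\mathcal I_k}\bar\eta_{tN^2}(y)\Big|>\varepsilon a_N\Big)=-\infty .
\]
I would prove this by discretizing $[0,T]$ into $N^3$ equally spaced times: at a fixed time $\sum_{y\in\mathcal I_k}\bar\eta_{tN^2}(y)$ has, by stationarity under $\nu_\rho$ (and Remark~\ref{rmk:coupling} to pass to $\nu_\rho^*$), the law of a partial sum of at most $Ma_N$ i.i.d.\ centered bounded variables, so Hoeffding's inequality gives a bound $\exp(-c\varepsilon^2 a_N/M)$; the union bound over the $N^3$ times and the $\lesssim Ma_N$ values of $k$ costs only $O(\log N)$ in the exponent, which is negligible relative to $a_N^2/N$ precisely because $a_N^2\gg N\log N$; and the time‑discretization error is controlled through the stirring representation exactly as at the end of the proof of Lemma~\ref{lem:current exponential tight} (over a time step of order $T/N$ only $O(1)$ stirring particles cross the boundary of $\mathcal I_k$, with super‑exponential control).

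\emph{Deduction of \eqref{et7} and \eqref{et8}, and the main obstacle.} For \eqref{et7}, fix $M$ and set $\tau_M=\inf\{t:|X(tN^2)|\ge Ma_N\}$; on $\{\tau_M\le T\}$ one has $|X(\tau_MN^2)|=\lceil Ma_N\rceil$ because the jumps are $\pm1$, so $\mathcal I_{X(\tau_MN^2)}$ is then one of two \emph{deterministic} sets and the identity yields $\rho\lceil Ma_N\rceil\le|J_{-1,0}(\tau_MN^2)|+1+\sup_{0\le t\le T}\max_{|k|\le Ma_N}|\sum_{y\in\mathcal I_k}\bar\eta_{tN^2}(y)|$; hence for $M$ large $\{\tau_M\le T\}$ is contained in the union of $\{\sup_{0\le t\le T}|J_{-1,0}(tN^2)|>\tfrac{\rho M}{2}a_N\}$ and the event of the density estimate with $\varepsilon=\tfrac{\rho M}{4}$, and \eqref{et7} follows from \eqref{et1} and the density estimate. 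For \eqref{et8}, work on $G_M=\{\sup_{0\le t\le T}|X(tN^2)|\le Ma_N\}$, whose complement is super‑exponentially negligible by \eqref{et7}; on $G_M$ the identity gives $\rho[X((t+\tau)N^2)-X(\tau N^2)]=[J_{-1,0}((t+\tau)N^2)-J_{-1,0}(\tau N^2)]-[\Sigma((t+\tau)N^2)-\Sigma(\tau N^2)]$ with $|\Sigma((t+\tau)N^2)-\Sigma(\tau N^2)|\le 2\sup_{s}\max_{|k|\le Ma_N}|\sum_{y\in\mathcal I_k}\bar\eta_{sN^2}(y)|$, so bounding this last quantity by $\tfrac{\rho\varepsilon}{4}a_N$ via the density estimate and the current increment by $\tfrac{\rho\varepsilon}{2}a_N$ via \eqref{et2} forces $|X((t+\tau)N^2)-X(\tau N^2)|\le\varepsilon a_N$; as $M$ is a free parameter and $\P(G_M^c)$ is super‑exponentially small after $M\to\infty$, \eqref{et8} follows. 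The essential difficulty, absent from the fixed‑time analysis of \cite{xue2023moderate}, is the self‑referential form of $\Sigma$: its range is exactly the process $X(\cdot N^2)$ we are estimating, and the trivial bound $|\Sigma(t)|\le|X(tN^2)|$ is useless since it only gives $\rho|X|\le|J|+1+|X|$. This is circumvented by the bootstrap above — first confine $X(\cdot N^2)$ to a deterministic window $[-Ma_N,Ma_N]$ using the current estimates (and a stopping time for \eqref{et7}), and only then estimate $\Sigma$ by the uniform density‑fluctuation bound, whose proof is the second delicate point because it demands concentration at the sharp scale together with a union bound over $\asymp Ma_N$ windows and $N^3$ times, affordable only thanks to $a_N^2\gg N\log N$, plus the stirring‑representation continuity estimate of Lemma~\ref{lem:current exponential tight}.
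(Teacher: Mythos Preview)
Your proposal is correct; the ingredients and the bootstrap structure are sound. The route differs from the paper's in two places worth noting.

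For \eqref{et8} the paper does \emph{not} keep the current at the fixed bond $(-1,0)$ and absorb the shift into $\Sigma$. Instead, on the event $\{|X(sN^2)|\le Ma_N\text{ for all }s\}$ and on $C_{N,M,\varepsilon,\delta}:=\{\inf_{|k|\le Ma_N}\inf_{s}\sum_{x=0}^{[a_N\varepsilon]}\eta_{sN^2}(x+k)\ge\rho a_N\varepsilon/2\}$, it observes that if $X((t+\tau)N^2)-X(\tau N^2)>\varepsilon a_N$ with $k:=X(\tau N^2)$, then $J_{k-1,k}((t+\tau)N^2)-J_{k-1,k}(\tau N^2)\ge\sum_{x=k}^{k+[a_N\varepsilon]}\eta_{(t+\tau)N^2}(x)\ge\rho a_N\varepsilon/2$, and concludes by a union bound over $|k|\le Ma_N$ together with \eqref{et2}. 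The virtue of your version is that you only ever invoke \eqref{et2} at the single bond $(-1,0)$; the paper's version instead localizes at the tagged position and needs only a one-sided density lower bound on intervals of length $\varepsilon a_N$ rather than your two-sided control on intervals of length up to $Ma_N$. Both work under $a_N^2\gg N\log N$.

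For the uniform density control (your key estimate, the paper's \eqref{et9}), the paper's time discretization is simpler than what you propose: it uses $N^2$ grid points and bounds the fluctuation of $\sum_{x}\eta_{tN^2}(x)$ between grid points by the boundary currents $|J_{-1,0}|,|J_{[a_NM],[a_NM]+1}|$, each stochastically dominated by a rate-$1$ Poisson process over a microscopic time of order $T$, yielding a bound $N^2\exp(-c a_N)$. Your appeal to the stirring representation from Lemma~\ref{lem:current exponential tight} is not needed here---that argument was tailored to a \emph{smooth} test function $\delta_n$; for an indicator of an interval the change of the occupation sum over a short time window is exactly the boundary current, so Poisson domination is the right (and shorter) tool.
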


\begin{proof}
We first prove \eqref{et7}. Since the process is symmetric with respect to the origin, we could remove the absolute value inside the probability in \eqref{et7}. If $X(tN^2) > a_N M$ for some $0 \leq t \leq T$, then $J_{-1,0} (tN^2) \geq \sum_{x=0}^{[a_N M]} \eta_{t N^2} (x)$ for some $0 \leq t \leq T$. Thus, the probability in \eqref{et7} is bounded by
\begin{multline*}
	\P_{\nu_\rho^*} \Big(\sup_{0 \leq t \leq T} J_{-1,0} (tN^2)  \geq \inf_{0 \leq t \leq T} \sum_{x=0}^{[a_N M]} \eta_{tN^2} (x) \Big) \leq 	\P_{\nu_\rho^*} \Big(\sup_{0 \leq t \leq T} J_{-1,0} (tN^2)  \geq \rho a_N M/2 \Big) \\+ 	\P_{\nu_\rho^*} \Big( \inf_{0 \leq t \leq T} \sum_{x=0}^{[a_N M]} \eta_{t N^2} (x) \leq \rho a_N M/2\Big).
\end{multline*}
To finish the proof, it remains to show that 
\begin{equation*}
	\limsup_{M \rightarrow \infty} \limsup_{N \rightarrow \infty} \frac{N}{a_N^2} \log  \P_{\nu_\rho^*} \Big(\sup_{0 \leq t \leq T} J_{-1,0} (tN^2)  \geq \rho a_N M/2 \Big) = - \infty,
\end{equation*}
which follows directly from \eqref{et1}, and that, for any fixed $M > 0$, 
\begin{equation}\label{et9}
\limsup_{N \rightarrow \infty} \frac{N}{a_N^2} \log \P_{\nu_\rho^*} \Big( \inf_{0 \leq t \leq T} \sum_{x=0}^{[a_N M]} \eta_{tN^2} (x) \leq \rho a_N M/2\Big) = - \infty.
\end{equation}

Now, we prove \eqref{et9}. For $0 \leq i \leq N^2$, define $t_i = iT/N^2$.  Then, for $t_i \leq  t \leq t_{i+1}$, 
\[\sum_{x=0}^{[a_N M]} \eta_{t N^2} (x) = \sum_{x=0}^{[a_N M]} \eta_{t_iN^2} (x)+ J_{-1,0} (t_iN^2,tN^2) -  J_{[a_NM],[a_NM]+1} (t_iN^2,tN^2),\]
which implies that 
\begin{multline*}
\inf_{0 \leq t \leq T}	\sum_{x=0}^{[a_N M]} \eta_{t N^2} (x) \geq \inf_{0 \leq i \leq N^2} \sum_{x=0}^{[a_N M]} \eta_{t_iN^2} (x) \\
- \sup_{0 \leq i \leq N^2} \sup_{t_i \leq t \leq t_{i+1}} \big|J_{-1,0} (t_iN^2,tN^2)\big|  - \sup_{0 \leq i \leq N^2} \sup_{t_i \leq t \leq t_{i+1}} \big|J_{[a_NM],[a_NM]+1} (t_iN^2,tN^2)\big|.
\end{multline*}
This permits us to bound the probability in \eqref{et9} by
\begin{multline*}
	\P_{\nu_\rho^*} \Big( \inf_{0 \leq i \leq N^2} \sum_{x=0}^{[a_N M]} \eta_{t_iN^2} (x) \leq 3 \rho a_N M/4 \Big) 
	+ 	\P_{\nu_\rho^*} \Big(  \sup_{0 \leq i \leq N^2} \sup_{t_i \leq t \leq t_{i+1}} \big|J_{-1,0} (t_iN^2,tN^2)\big|  \geq \rho a_N M/8 \Big) \\
	+ 	\P_{\nu_\rho^*} \Big( \sup_{0 \leq i \leq N^2} \sup_{t_i \leq t \leq t_{i+1}} \big|J_{[a_NM],[a_NM]+1} (t_iN^2,tN^2)\big| \geq \rho a_N M/8 \Big).
\end{multline*}
By Remark \ref{rmk:coupling}, we could replace the above $\P_{\nu_\rho^*}$ with $\P_{\nu_\rho}$. Since $\nu_\rho$ is  invariant for the SSEP, using the large deviation principle of the sum of i.i.d. random variables, there exists $C>0$ independent of $N$ and $1\leq i\leq N^2$ such that
\[
\limsup_{N\rightarrow+\infty}\frac{1}{a_N}\log \P_{\nu_\rho}\Big( \sum_{x=0}^{[a_N M]} \eta_{t_iN^2} (x) \leq 3 \rho a_N M/4\Big)=-C
\]
for all $1\leq i\leq N^2$. Then, since $a_N \gg \sqrt{N\log N}$, we have
\begin{equation}\label{et9.1}
\limsup_{N\rightarrow+\infty}\frac{N}{a_N^2}\log \P_{\nu_\rho} \Big(\inf_{0 \leq i \leq N^2} \sum_{x=0}^{[a_N M]} \eta_{t_i N^2} (x) \leq 3 \rho a_N M/4\Big)=-\infty.
\end{equation}
Since a particle crosses an edge at rate at most $1$, $\{\big|J_{-1,0} (t_iN^2,tN^2)\big| \}_{t\geq t_i}$ is stochastically dominated from above by a Poisson process with rate $1$. Thus,
\begin{multline*}
\P_{\nu_{\rho}} (\sup_{0 \leq i \leq N^2} \sup_{t_i \leq t \leq t_{i+1}} \big|J_{-1,0} (t_iN^2,tN^2)\big|  \geq \rho a_N M/8 )\\
 \leq  N^2 \P_{\nu_{\rho}} (\sup_{t_i \leq t \leq t_{i+1}} \big|J_{-1,0} (t_iN^2,tN^2)\big|  \geq \rho a_N M/8 )  \leq C N^2  \exp\{-C a_N M\}
\end{multline*}
for some constant $C > 0$,  which implies
\begin{equation}\label{et9.2}
\limsup_{N\rightarrow+\infty}\frac{N}{a^2_N}\log \P_{\nu_{\rho}} \Big(\sup_{0 \leq i \leq N^2} \sup_{t_i \leq t \leq t_{i+1}} \big|J_{-1,0} (t_iN^2,tN^2)\big|  \geq \rho a_N M/8 \Big)=-\infty.
\end{equation}
By translation invariance, \eqref{et9.2} still holds when $J_{-1,0} (t_iN^2,tN^2)$ is replaced by 
\[J_{[a_NM],[a_NM]+1} (t_iN^2,tN^2),\]
thus concluding the proof of  \eqref{et9}. 

Now we prove \eqref{et8}. We first remove the absolute value inside the probability in \eqref{et8} as before. For any $M>0$, we define \[B_{M,N,\delta}= \Big\{\sup_{0\leq t\leq {T+\delta}}\frac{1}{a_N}|X(tN^2)|\leq M\Big\}.\] By \eqref{et7},
\[
\limsup_{M\rightarrow+\infty}\limsup_{N\rightarrow+\infty}\frac{N}{a_N^2}\log \P_{\nu_{\rho}^*}(B_{M,N,\delta}^c)=-\infty.
\]
Hence, to prove \eqref{et8}, we only need to show that
\begin{equation}\label{et10}
\limsup_{\delta \rightarrow 0} \limsup_{N \rightarrow \infty}  \sup_{\tau \in \mathcal{T}_T} \frac{N}{a_N^2} \log \P_{\nu_{\rho}^*} \Big(\sup_{0< t \leq \delta} \frac{1}{a_N} \left(X ((t+\tau)N^2) - X(\tau N^2)\right) > \varepsilon, B_{N,M,\delta}\Big) = - \infty
\end{equation}
for any $M>0$. We further define 
\[
C_{N,M,\varepsilon,\delta} = \Big\{ \inf_{-a_NM \leq k\leq a_NM} \inf_{0\leq t\leq T+\delta}\sum_{x=0}^{[a_N\varepsilon]}\eta_{tN^2} (x+k)\geq \frac{\rho a_N\varepsilon}{2} \Big\},
\]
where the infimum is over integers $k$. Then, by \eqref{et9}, 
\[
\limsup_{N\rightarrow+\infty}\frac{N}{a_N^2}\log\P_{\nu_{\rho}^*}(C_{N,M,\varepsilon,\delta}^c)=-\infty.
\]
Therefore, to prove \eqref{et10}, we only need to show that, for any $M > 0$,
\begin{multline}\label{et11}
\limsup_{\delta \rightarrow 0} \limsup_{N \rightarrow \infty}  \sup_{\tau \in \mathcal{T}_T} \frac{N}{a_N^2} \log \P_{\nu_{\rho}^*} \Big(\sup_{0< t \leq \delta} \frac{1}{a_N} \left(X ((t+\tau)N^2) - X(\tau N^2)\right) > \varepsilon,\\ 
B_{N,M,\delta}, C_{N,M,\varepsilon,\delta}\Big) = - \infty. 
\end{multline}
Conditioned on the event $B_{N,M,\delta}\bigcap C_{N,M,\varepsilon,\delta}$, we have $X(\tau N^2)=k$ for some $-a_NM\leq k\leq a_NM$. Moreover, if $\frac{1}{a_N} \left(X ((t+\tau)N^2) - X(\tau N^2)\right) > \varepsilon$ for some $0 < t\leq \delta$, then
\begin{align*}
J_{k-1,k}((t+\tau)N^2)-J_{k-1,k}(\tau N^2)&=\sum_{x=k}^{X((t+\tau)N^2)}\eta_{(t+\tau)N^2}(x)\\
&\geq \sum_{x=k}^{k+[a_N\epsilon]}\eta_{(t+\tau)N^2}(x)=\sum_{x=0}^{[a_N\varepsilon]}\eta_{(t+\tau)N^2}(x+k)\geq \frac{\rho a_N\varepsilon}{2}
\end{align*}
for some $-a_NM\leq k\leq a_NM$. 
Thus, to prove \eqref{et11}, we only need to show that
\begin{multline}\label{et12}
\limsup_{\delta \rightarrow 0} \limsup_{N \rightarrow \infty}  \sup_{\tau \in \mathcal{T}_T} \frac{N}{a_N^2} \log \sum_{k:|k|\leq a_NM} \P_{\nu_{\rho}^*} \Big(\sup_{0< t \leq \delta} \frac{1}{a_N}\left(J_{k-1,k}((t+\tau)N^2)-J_{k-1,k}(\tau N^2)\right)\geq \frac{\rho \varepsilon}{2} \Big) \\
= - \infty,
\end{multline}
which follows immediately from \eqref{et2} since $a_N \gg \sqrt{N \log N}$. This concludes the proof.
\end{proof}

\section{Proof of Theorems \ref{theorem MDP of current} and \ref{theorem MDP of tagged particle}}\label{section proofs}

In this section, we prove Theorems \ref{theorem MDP of current} and \ref{theorem MDP of tagged particle}.  We first state two lemmas concerning  finite-dimensional moderate deviation principles for the current and the tagged particle.

\begin{lemma}\label{lemma finite dimensional MDP of current}
For any $n \geq 1$ and for any $0\leq t_1<t_2<\ldots<t_n$, the sequence \[\Big\{\frac{1}{a_N}\left(J_{-1,0}(t_1N^2),\ldots,J_{-1,0}(t_nN^2)\right)\Big\}_{N \geq 1}\] 
satisfies moderate deviation principles with decay rate $a_N^2/N$ and with rate function $\sigma_J^{-2} \mathcal{I}_{_{\{t_j\}_{j=1}^n}}$ under both $\P_{\nu_\rho}$ and $\P_{\nu_\rho^*}$.  Recall the definition of  $\mathcal{I}_{_{\{t_j\}_{j=1}^n}}$ from \eqref{I_tj_n}. More precisely,  for any closed set $\Ccal \subseteq \mathbb{R}^n$,
\begin{equation}\label{equ 5.1 current fdLDP upper bound}
\limsup_{N\rightarrow +\infty}\frac{N}{a_N^2}\log \P\left(\frac{1}{a_N}\left(J_{-1,0}(t_1N^2),\ldots,J_{-1,0}(t_nN^2)\right)\in \Ccal \right)
\leq -\inf_{\alphabf \in \Ccal}\frac{1}{\sigma_J^2}\mathcal{I}_{_{\{t_j\}_{j=1}^n}}(\alphabf),
\end{equation}
and for any open set $\Ocal \subseteq \mathbb{R}^n$, 
\begin{equation}\label{equ 5.2 current fdLDP lower bound}
\liminf_{N\rightarrow +\infty}\frac{N}{a_N^2}\log \P\left(\frac{1}{a_N}\left(J_{-1,0}(t_1N^2),\ldots,J_{-1,0}(t_nN^2)\right)\in \Ocal\right)
\geq -\inf_{\alphabf \in \Ocal}\frac{1}{\sigma_J^2}\mathcal{I}_{_{\{t_j\}_{j=1}^n}}(\alphabf),
\end{equation}
where $\P=\P_{\nu_\rho}$ or $\P=\P_{\nu_\rho^*}$. 
\end{lemma}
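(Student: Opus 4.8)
\emph{Proof proposal.} By Remark~\ref{rmk:coupling} it suffices to treat $\P=\P_{\nu_\rho}$, and we may assume $t_1>0$: if $t_1=0$ then $J_{-1,0}(0)\equiv 0$, the first coordinate is degenerate, and the statement reduces to the case of $(t_2,\dots,t_n)$ with the convention $\mathcal{I}_{\{t_j\}_{j=1}^n}(\alphabf)=+\infty$ when $\alpha_1\neq 0$. The plan is to realize the current vector as an exponentially good approximation of a functional of the centered empirical measure $\mu^N_\cdot$ that is continuous on the level sets of $\mathcal{Q}_T$, transfer the MDP of Theorem~\ref{thm:mdphl} by the contraction principle, and then identify the resulting variational rate function via Proposition~\ref{prop:rateF_varF_2}.

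First I would use the identity \eqref{et2.2}: for each $n>0$ and $t\geq 0$,
\[
\frac{1}{a_N}J_{-1,0}(tN^2)=\<\mu^N_t,G_n\>-\<\mu^N_0,G_n\>+\frac{1}{nNa_N}\sum_{x=0}^{nN-1}J_{x,x+1}(tN^2),\qquad G_n(u):=\Big(1-\frac{u}{n}\Big)^+\mathbf{1}\{u\geq 0\}.
\]
The last term is super-exponentially negligible at speed $a_N^2/N$, uniformly in $t\in[0,T]$, by Lemma~\ref{lem:superexponential_estimate}. Since $G_n\notin\Scal(\R)$, I would approximate it by a smooth $\widetilde{G}_n\in\Scal(\R)$ with $\|G_n-\widetilde{G}_n\|_\infty\leq C/n$ and support of $G_n-\widetilde{G}_n$ contained in $[-2n,2n]$; because $\nu_\rho$ is invariant, $\<\mu^N_{t_j},G_n-\widetilde{G}_n\>$ has the law of $\<\mu^N_0,G_n-\widetilde{G}_n\>$, so the product-measure Taylor-expansion bound already used in the proof of Lemma~\ref{lem:current exponential tight} gives, for every $\delta>0$,
\[
\limsup_{n\to\infty}\,\limsup_{N\to\infty}\frac{N}{a_N^2}\log\P_{\nu_\rho}\Big(\max_{1\leq j\leq n}\big|\<\mu^N_{t_j}-\mu^N_0,\,G_n-\widetilde{G}_n\>\big|>\delta\Big)=-\infty.
\]
Combining these two facts, $a_N^{-1}\big(J_{-1,0}(t_jN^2)\big)_{1\leq j\leq n}$ is, in the sense of exponentially good approximations at speed $a_N^2/N$, the $n\to\infty$ limit of $\Phi_n(\mu^N_\cdot):=\big(\<\mu^N_{t_j}-\mu^N_0,\,\widetilde{G}_n\>\big)_{1\leq j\leq n}$.

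For fixed $n$, $\Phi_n\colon\Dcal([0,T],\Scal^\prime(\R))\to\R^n$ is linear in $\mu$ evaluated at the fixed times $0,t_1,\dots,t_n$. Evaluation at fixed times is not continuous on all of $\Dcal([0,T],\Scal^\prime(\R))$, but on each super-level set $\{\mu:\mathcal{Q}_T(\mu)\leq c\}$ — which is compact and on which the trajectories $t\mapsto\<\mu_t,f\>$ are equicontinuous (Remark~\ref{rmk:goodness of Qcal}) — it is continuous, hence so is $\Phi_n$. The contraction principle (in the form valid for functionals continuous on the compact level sets of a good rate function) then yields the MDP for $\{\Phi_n(\mu^N_\cdot)\}_N$ at speed $a_N^2/N$ with the good rate function $I_n(\alphabf)=\inf\{\mathcal{Q}_T(\mu):\<\mu_{t_j}-\mu_0,\widetilde{G}_n\>=\alpha_j,\ 1\leq j\leq n\}$. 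Letting $n\to\infty$ and using Lemma~\ref{lem:mu property-2} together with Remark~\ref{rmk:mu property} — which give $\<\mu_{t_j}-\mu_0,\widetilde{G}_n\>\to\int_0^\infty[\mu(t_j,u)-\mu(0,u)]\,du$ for every $\mu$ with $\mathcal{Q}_T(\mu)<\infty$, uniformly on level sets of $\mathcal{Q}_T$ — one checks that the $I_n$ converge (in the sense relevant to exponentially good approximations) to
\[
I(\alphabf)=\inf\Big\{\mathcal{Q}_T(\mu):\int_0^\infty[\mu(t_j,u)-\mu(0,u)]\,du=\alpha_j,\ 1\leq j\leq n\Big\}=\frac{\sqrt{2\pi}}{4\chi(\rho)}\alphabf^TA^{-1}\alphabf=\frac{1}{\sigma_J^2}\mathcal{I}_{\{t_j\}_{j=1}^n}(\alphabf),
\]
the last two equalities being Proposition~\ref{prop:rateF_varF_2} and the identity recorded just after it. Finally, the standard statement on exponentially good approximations (see e.g.~\cite{deuschel1989large}), whose full-LDP conclusion is ensured here either by the exponential tightness of Lemma~\ref{lem:current exponential tight} or directly by goodness of the limiting quadratic form $I$, upgrades this to the two bounds \eqref{equ 5.1 current fdLDP upper bound}--\eqref{equ 5.2 current fdLDP lower bound}.

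The step I expect to be the main obstacle is the interchange of the limits $N\to\infty$ and $n\to\infty$: one must simultaneously make the exponential approximation of the current vector by $\Phi_n(\mu^N_\cdot)$ uniform (using the time-uniform super-exponential estimate of Lemma~\ref{lem:superexponential_estimate} and the smoothing bound) and control the convergence $I_n\to I$ of the constrained variational problems, which forces one to pass the linear constraints $\<\mu_{t_j}-\mu_0,\widetilde{G}_n\>=\alpha_j$ to the limiting constraints $\int_0^\infty[\mu(t_j,u)-\mu(0,u)]\,du=\alpha_j$ uniformly over level sets of $\mathcal{Q}_T$; this is exactly where the integrability estimates of Lemma~\ref{lem:mu property-2} and Remark~\ref{rmk:mu property} are indispensable. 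The fact that the evaluation functionals are continuous only on the level sets of $\mathcal{Q}_T$, and not globally, is a minor but genuinely necessary point in applying the contraction principle.
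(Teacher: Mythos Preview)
Your proposal is correct and follows essentially the same route as the paper: the decomposition \eqref{et2.2}, the super-exponential estimate of Lemma~\ref{lem:superexponential_estimate}, the smoothing $G_n\to\widetilde G_n$, the transfer of Theorem~\ref{thm:mdphl}, and the identification via Proposition~\ref{prop:rateF_varF_2} are exactly the ingredients the paper uses. The only difference is packaging. The paper argues the upper and lower bounds separately and by hand: for the lower bound it plugs in the explicit minimizer $\mu^n_{t_n}$ from the proof of Proposition~\ref{prop:Q_finiteD}; for the upper bound it covers a compact set by balls, picks near-minimizers $\mu^{r,j,m}_\varepsilon$ of the constrained problem with test function $\widetilde G_m$, extracts a subsequential limit using the compactness of level sets (Remark~\ref{rmk:goodness of Qcal}), and then passes the constraints to the limit via a dedicated Lemma~\ref{lem:current condition approxiamation}. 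You instead invoke the contraction principle in its extended form (continuity on the compact level sets of a good rate function) and then the machinery of exponentially good approximations. Both approaches work; yours is shorter to state but leans on abstract theorems whose hypotheses must be checked, while the paper's hand argument is self-contained. In particular, the ``uniformly on level sets'' convergence you cite is not literally the content of Lemma~\ref{lem:mu property-2} and Remark~\ref{rmk:mu property} (which are pointwise), but it does follow from the same estimates since the bounds there depend only on $\|\psi\|_{L^2}$ and $[H,H]$; the paper isolates precisely this step as Lemma~\ref{lem:current condition approxiamation}, which is the analytic heart of the upper bound in either presentation.
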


\begin{lemma}\label{lemma finite dimensional MDP of tagged particle}
For any $n \geq 1$ and for any $0\leq t_1<t_2<\ldots<t_n$, the sequence \[\Big\{\frac{1}{a_N}\left(X (t_1N^2),\ldots,X (t_nN^2)\right)\Big\}_{N \geq 1}\] 
satisfies moderate deviation principles with decay rate $a_N^2/N$ and with rate function $\sigma_X^{-2} \mathcal{I}_{_{\{t_j\}_{j=1}^n}}$.   More precisely,  for any closed set $\Ccal \subseteq \mathbb{R}^n$,
\begin{equation}\label{equ 5.3 tagged fdLDP upper bound}
\limsup_{N\rightarrow +\infty}\frac{N}{a_N^2}\log \P_{\nu_\rho^*}\left(\frac{1}{a_N}\left(X(t_1N^2),\ldots,X(t_nN^2)\right)\in \Ccal \right)
\leq -\inf_{\alphabf \in \Ccal}\frac{1}{\sigma_X^2}\mathcal{I}_{_{\{t_j\}_{j=1}^n}}(\alphabf),
\end{equation}
and for any open set $\Ocal\subseteq \mathbb{R}^n$, 
\begin{equation}\label{equ 5.4 tagged fdLDP lower bound}
\liminf_{N\rightarrow +\infty}\frac{N}{a_N^2}\log \P_{\nu_\rho^*}\left(\frac{1}{a_N}\left(X(t_1N^2),\ldots,X(t_nN^2)\right)\in \Ocal\right)
\geq -\inf_{\alphabf\in \Ocal}\frac{1}{\sigma_X^2}\mathcal{I}_{_{\{t_j\}_{j=1}^n}}(\alphabf).
\end{equation}
\end{lemma}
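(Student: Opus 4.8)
The plan is to derive the finite--dimensional MDP for the tagged particle from the corresponding one for the current, Lemma~\ref{lemma finite dimensional MDP of current}, by exhibiting the two rescaled vectors as exponentially equivalent up to a deterministic dilation by $1/\rho$. The bridge is a pathwise identity. Since the process starts from $\nu_\rho^*$ we have $\eta_0(0)=1$, and since particles in the SSEP never change their relative order, the sites $\{X(t),X(t)+1,\dots\}$ are occupied at time $t$ by exactly the tagged particle together with every particle that was initially in $\{1,2,\dots\}$; hence, in the telescoped sense, $\sum_{x\ge X(t)}\eta_t(x)=\sum_{x\ge 0}\eta_0(x)$. Combining this with $J_{-1,0}(t)=\sum_{x\ge 0}\bigl(\eta_t(x)-\eta_0(x)\bigr)$ gives
\[
J_{-1,0}(t)=\Lambda_t\bigl(X(t)\bigr),\qquad \Lambda_t(k):=\sum_{x=0}^{k-1}\eta_t(x)\ (k\ge1),\quad \Lambda_t(k):=-\sum_{x=k}^{-1}\eta_t(x)\ (k\le-1),\quad \Lambda_t(0):=0.
\]
Since local equilibrium heuristically gives $\Lambda_t(k)\approx\rho k$, this makes $\tfrac1{a_N}X(t_jN^2)$ essentially equal to $\tfrac1{\rho a_N}J_{-1,0}(t_jN^2)$ on the moderate--deviation scale.

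First I would record the super--exponential density estimate: for all $t\ge0$, $M>0$, $\varepsilon>0$,
\[
\limsup_{N\to\infty}\frac{N}{a_N^2}\log\P_{\nu_\rho^*}\Bigl(\sup_{|k|\le Ma_N}\bigl|\Lambda_{tN^2}(k)-\rho k\bigr|>\varepsilon a_N\Bigr)=-\infty.
\]
By the basic coupling of Remark~\ref{rmk:coupling} it suffices to prove this under $\P_{\nu_\rho}$, where $\eta_{tN^2}$ is distributed as $\nu_\rho$ by invariance, so that for each fixed $k$ the quantity $\Lambda_{tN^2}(k)-\rho k$ is a sum of $|k|$ i.i.d.\ centered Bernoulli variables; Hoeffding's inequality together with a union bound over the at most $2Ma_N+1$ relevant values of $k$ gives an upper bound of order $Ma_N\exp(-c\varepsilon^2a_N/M)$, which is super--exponentially small relative to $a_N^2/N$ because $\sqrt{N\log N}\ll a_N\ll N$ forces $N\log(Ma_N)/a_N^2\to0$ and $N/a_N\to\infty$ simultaneously.

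Next I would combine these two ingredients with the exponential tightness of Section~\ref{sec:exp tight}. Fix $M>0$; on the event $\{|X(t_jN^2)|\le Ma_N\text{ for all }j\}$ the pathwise identity yields $\bigl|\rho X(t_jN^2)-J_{-1,0}(t_jN^2)\bigr|\le\sup_{|k|\le Ma_N}|\Lambda_{t_jN^2}(k)-\rho k|$ for every $j$, so for each $\varepsilon>0$,
\[
\limsup_{N\to\infty}\frac{N}{a_N^2}\log\P_{\nu_\rho^*}\Bigl(\max_{1\le j\le n}\bigl|\tfrac1{a_N}X(t_jN^2)-\tfrac1{\rho a_N}J_{-1,0}(t_jN^2)\bigr|>\varepsilon,\ \max_{1\le j\le n}|X(t_jN^2)|\le Ma_N\Bigr)=-\infty,
\]
and letting $M\to\infty$ while invoking part (1) of Lemma~\ref{lem:tagged particle exponential tight} removes the side constraint. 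Thus $\bigl\{\tfrac1{a_N}(X(t_jN^2))_{j=1}^n\bigr\}_N$ and $\bigl\{\tfrac1{\rho a_N}(J_{-1,0}(t_jN^2))_{j=1}^n\bigr\}_N$ are exponentially equivalent in $\R^n$ at speed $a_N^2/N$ (see, e.g., \cite{deuschel1989large}). By Lemma~\ref{lemma finite dimensional MDP of current} and the contraction principle applied to the bijection $\alphabf\mapsto\alphabf/\rho$, the second sequence satisfies the MDP with rate function $\betabf\mapsto\sigma_J^{-2}\mathcal{I}_{_{\{t_j\}_{j=1}^n}}(\rho\betabf)$; since $\mathcal{I}_{_{\{t_j\}_{j=1}^n}}$ is the quadratic form $\tfrac12\alphabf^TA^{-1}\alphabf$ and $\sigma_J^2=\rho^2\sigma_X^2$, this equals $\sigma_X^{-2}\mathcal{I}_{_{\{t_j\}_{j=1}^n}}$, and exponential equivalence transfers the same MDP to the tagged particle, which is the assertion of the lemma. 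For completeness, Lemma~\ref{lemma finite dimensional MDP of current} itself would follow from Theorem~\ref{thm:mdphl} and Proposition~\ref{prop:rateF_varF_2} by contraction along the continuous map $\mu\mapsto\bigl(\int_0^\infty[\mu(t_j,u)-\mu(0,u)]\,du\bigr)_{j=1}^n$, after a super--exponential identification of $\tfrac1{a_N}J_{-1,0}(t_jN^2)$ with the corresponding functional of the empirical measure via \eqref{et2.2}.

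The only genuinely new input beyond Lemma~\ref{lemma finite dimensional MDP of current} and the exponential tightness of Section~\ref{sec:exp tight} is the density estimate above, which is elementary. The one point requiring a little care is that all the closeness bounds must hold simultaneously at the $n$ times $t_1<\dots<t_n$, but this costs only a union bound over finitely many fixed times; at a single time the argument is exactly the one used in \cite{xue2023moderate}, so I expect the main obstacle here to be bookkeeping rather than any new estimate.
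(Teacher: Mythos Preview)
Your proposal is correct and rests on the same ingredients as the paper: the pathwise identity $J_{-1,0}(t)=\Lambda_t(X(t))$ (the paper's \eqref{equ 5.3.1}--\eqref{equ 5.3.2}), the super-exponential density estimate $\sup_{|k|\le Ma_N}|\Lambda_{tN^2}(k)-\rho k|>\varepsilon a_N$ (the paper's events $F_N$ and $V_N$, handled via \eqref{equ 5.3.5}), and the exponential tightness of $X$ from Lemma~\ref{lem:tagged particle exponential tight}.

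The organizational difference is that the paper proves the upper and lower bounds \eqref{equ 5.3 tagged fdLDP upper bound} and \eqref{equ 5.4 tagged fdLDP lower bound} separately, each time arguing by direct set inclusions (if $X/a_N\in B(\alphabf,\varepsilon)$ and the density event holds, then $J_{-1,0}/a_N\in B(\rho\alphabf,\varepsilon')$, and conversely), whereas you package the same estimates once as an exponential equivalence between $\frac{1}{a_N}X$ and $\frac{1}{\rho a_N}J_{-1,0}$ and then invoke the abstract transfer principle plus the contraction $\alphabf\mapsto\alphabf/\rho$. Your route is a bit cleaner and avoids repeating the argument twice; the paper's route is more self-contained and does not appeal to the exponential-equivalence theorem. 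Either way the substantive work is identical.
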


We shall prove Lemmas \ref{lemma finite dimensional MDP of current} and \ref{lemma finite dimensional MDP of tagged particle} in later subsections. Now we utilize these two lemmas to prove Theorems \ref{theorem MDP of current} and \ref{theorem MDP of tagged particle}.

\begin{proof}[Proof of Theorems \ref{theorem MDP of current} and \ref{theorem MDP of tagged particle}]

We first prove Theorem \ref{theorem MDP of current}. In Lemma \ref{lem:current exponential tight}, we have shown that the sequence $\{\frac{1}{a_N}J_{-1,0}(tN^2): 0\leq t\leq T\}_{N\geq 1}$ is exponentially tight. Using  \cite[Theorem 4.28]{Feng2006LDP} and Lemma \ref{lemma finite dimensional MDP of current},  the sequence 
$\{\frac{1}{a_N}J_{-1,0}(tN^2): 0\leq t\leq T\}_{N\geq 1}$ satisfies moderate deviation principles with decay rate $a_N^2/N$ and with rate function given by
\[
\sup\Big\{\frac{1}{\sigma_J^2}\mathcal{I}_{_{\{t_j\}_{j=1}^n}}\left(f_{_{\{t_j\}_{j=1}^n}}\right):~n\geq 1, 0\leq t_1<t_2<\ldots<t_n\leq T, t_j\in \Delta_f^c\text{~for all~}1\leq j\leq n\Big\}
\]
for all $f\in \mathcal{D}([0, T],\R)$, where $f_{_{\{t_j\}_{j=1}^n}}$ and $\Delta_f^c$ are defined as in Proposition \ref{prop:rateF_varF_1}. Using Proposition \ref{prop:rateF_varF_1} again, the last supremum equals \[\frac{1}{\sigma_J^2}\mathcal{I}_{path} (f) = \Ical_{cur} (f),\] 
which concludes the proof of Theorem \ref{theorem MDP of current}. For Theorem  \ref{theorem MDP of tagged particle}, we use Lemmas \ref{lem:tagged particle exponential tight} and \ref{lemma finite dimensional MDP of tagged particle} instead. 
\end{proof}

\subsection{Proof of Lemma \ref{lemma finite dimensional MDP of current}}  For $\alphabf \in \R^n$ and $r>0$, let
\[
B(\alphabf,r)=\left\{\betabf \in \mathbb{R}^n:~\|\betabf-\alphabf\|_\infty<r\right\}
\]
be the cube of radius $r$ centered at $\alphabf$. For any $M>0$, shorten
\[\bar{B}_M =\overline{B(\mathbf{0},M)} = \left\{ \alphabf \in \mathbb{R}^n:~\|\alphabf\|_\infty\leq M\right\}.\]
We  first prove the lower bound \eqref{equ 5.2 current fdLDP lower bound}.

\begin{proof}[Proof of \eqref{equ 5.2 current fdLDP lower bound}]
For any open set $\Ocal \subseteq \R^n$ and for any point $\alphabf \in \Ocal$, using \eqref{uniform_see} and \eqref{et2.2},  for sufficiently small $\varepsilon > 0$ such that $B(\alphabf,\varepsilon) \subseteq \Ocal$,
\begin{multline}\label{equ 5.2.1}
\liminf_{N\rightarrow +\infty}\frac{N}{a_N^2}\log \P_{\nu_{\rho}} \left(\frac{1}{a_N}\left(J_{-1,0}(t_1N^2),\ldots,J_{-1,0}(t_nN^2)\right)\in \Ocal\right)\\
\geq \liminf_{m\rightarrow+\infty}\liminf_{N\rightarrow +\infty}\frac{N}{a_N^2}\log \P_{\nu_{\rho}} \Big(\left\{\<\mu_{t_j}^N, G_m\>-\<\mu_{0}^N, G_m\>\right\}_{j=1}^n\in B(\alphabf,\varepsilon/2),\\
\left|\frac{1}{mNa_N} \sum_{x=0}^{mN-1} J_{x,x+1} (t_j N^2)\right|<\varepsilon/2\text{~for all~}1\leq j\leq n\Big)\\
=\liminf_{m\rightarrow+\infty}\liminf_{N\rightarrow +\infty}\frac{N}{a_N^2}\log \P_{\nu_{\rho}} \left(\left\{\<\mu_{t_j}^N, G_m\>-\<\mu_{0}^N, G_m\>\right\}_{j=1}^n\in B(\alphabf,\varepsilon/2)\right).
\end{multline}
 Note that $G_m \notin \Scal (\R)$.  By \eqref{smooth out G_n} and Theorem \ref{thm:mdphl},  the last line is bounded from below by 
\begin{multline*}
\liminf_{m\rightarrow+\infty} \liminf_{N\rightarrow +\infty}\frac{N}{a_N^2}\log \P_{\nu_{\rho}} \left(\left\{\<\mu_{t_j}^N, \tilde{G}_m\>-\<\mu_{0}^N, \tilde{G}_m\>\right\}_{j=1}^n\in B(\alphabf,\varepsilon/4)\right)\\
\geq - \limsup_{m\rightarrow+\infty} \inf\Big\{\Qcal_{t_n}(\mu):~\left\{\<\mu_{t_j}, \tilde{G}_m\>-\<\mu_{0}, \tilde{G}_m\>\right\}_{j=1}^n\in B(\alphabf,\varepsilon/4)\Big\}.
\end{multline*}

Let $\mu^n_{t_n} := \mu_{t_n}^{n,\{t_j\}_{j=1}^n,\{\alpha_j\}_{j=1}^n}$ be the minimizer of the following variational problem
\[		\inf \Big\{\Qcal_{t_n} (\mu): \mu \in \Acal, \;\int_0^\infty [\mu(t_j,u) - \mu(0,u)] du= \alpha_j,\; 1 \leq j \leq n \Big\} \]
which was introduced in  the proof of Proposition \ref{prop:Q_finiteD}. Since $\mu^n_{t_n} \in \Acal$, by the explicit expression of $\mu$ given in \eqref{mu formula},  $\mu_{t_n}^{n} (t, u) \in L^1(\mathbb{R})$ for any $t\in [0, t_n]$. Thus, by dominated convergence theorem, for all $1\leq j\leq n$,
	\begin{equation}\label{pf main 1}
		\lim_{m\rightarrow+\infty} \<\mu^n_{t_n} (t_j,\cdot) - \mu^n_{t_n}(0,\cdot), \tilde{G}_m\>
		= \int_0^\infty \Big[ \mu^n_{t_n} (t_j,u)-\mu^n_{t_n}(0,u) \Big]du =\alpha_j.
	\end{equation}
This implies that, for $m$ large enough,
\[
\left\{\<\mu^n_{t_n} (t_j), \tilde{G}_m\>-\<\mu^n_{t_n} (0), \tilde{G}_m\>\right\}_{j=1}^n\in B(\alphabf,\varepsilon/4).
\]
Consequently, 
\begin{multline*}
\liminf_{N\rightarrow +\infty}\frac{N}{a_N^2}\log \P_{\nu_{\rho}} \left(\frac{1}{a_N}\left(J_{-1,0}(t_1N^2),\ldots,J_{-1,0}(t_nN^2)\right)\in \Ocal \right)\\
\geq -  \Qcal_{t_n}(\mu^n_{t_n})=-\frac{\sqrt{2\pi}}{4\chi(\rho)}\alphabf^TA^{-1}_{\{t_j\}_{j=1}^n}\alphabf
=-\frac{1}{\sigma_J^2}\mathcal{I}_{\{t_j\}_{j=1}^n}(\alphabf).
\end{multline*}
Since $\alphabf \in \Ocal$ is arbitrary,
\begin{align*}
\liminf_{N\rightarrow +\infty}\frac{N}{a_N^2}\log \P_{\nu_{\rho}}\left(\frac{1}{a_N}\left(J_{-1,0}(t_1N^2),\ldots,J_{-1,0}(t_nN^2)\right)\in \Ocal \right)&\geq \sup_{\alphabf \in \Ocal}-\frac{1}{\sigma_J^2}\mathcal{I}_{\{t_j\}_{j=1}^n}(\alphabf)\\
&=-\inf_{\alphabf \in \Ocal}\frac{1}{\sigma_J^2}\mathcal{I}_{\{t_j\}_{j=1}^n}(\alphabf).
\end{align*}
By Remark \ref{rmk:coupling}, the lower bound also holds under $\P_{\nu_{\rho}^*}$, thus concluding the proof.
\end{proof}

Now, we prove the upper bound \eqref{equ 5.1 current fdLDP upper bound}.  By exponential tightness of the current (see Lemma \ref{lem:current exponential tight}), we only need to prove the estimate for any compact subset in $\R^n$. 

\begin{proof}[Proof of \eqref{equ 5.1 current fdLDP upper bound}]
As in the proof of the lower bound, we only need to deal with the case where $\P=\P_{\nu_\rho}$.
Let $\mathcal{K} \subseteq \R^n$ be any compact subset.  For any $r > 0$, we could find a  sequence of points $\alphabf^j \in \mathcal{K}$, $j = 1, 2, \ldots, \ell (r)$, such that
\[\mathcal{K} \subseteq \bigcup_{j=1}^{\ell} B(\alphabf^j,r).\]
By \eqref{et2.2}, for any $m > 0$ and any $r > 0$, 
\begin{multline*}
\limsup_{N\rightarrow +\infty}\frac{N}{a_N^2}\log \P_{\nu_\rho} \left(\frac{1}{a_N}\left(J_{-1,0}(t_1N^2),\ldots,J_{-1,0}(t_nN^2)\right)\in \mathcal{K}\right)\\
\leq \max_{1 \leq j \leq \ell} \limsup_{N\rightarrow +\infty} \frac{N}{a_N^2}\\
\log \P_{\nu_\rho} \left( \big\{\<\mu^N_{t_i},G_m\> -   \<\mu^N_{0},G_m\> + \frac{1}{mNa_N} \sum_{x=0}^{mN-1} J_{x,x+1} (t_i N^2)\big\}_{i=1}^n \in B(\alphabf^j,r)\right),
\end{multline*}
Using Lemma \ref{lem:superexponential_estimate},  the right-hand side of the last inequality is bounded from above by, for any $r > 0$, 
\[\limsup_{m \rightarrow \infty} \max_{1 \leq j \leq \ell} \limsup_{N\rightarrow +\infty} \frac{N}{a_N^2} \log \P_{\nu_\rho} \left( \big\{\<\mu^N_{t_i},G_m\> -   \<\mu^N_{0},G_m\>\big\}_{i=1}^n  \in B(\alphabf^j,2r)\right).\]
By \eqref{smooth out G_n}, we could replace the above $G_m$ with the Schwartz function $\tilde{G}_m$. Then, using moderate deviation principles from hydrodynamic limits (see Theorem \ref{thm:mdphl}), we finally bound the last expression from above by, for any $r > 0$, 
\[\limsup_{m \rightarrow \infty} \max_{1 \leq j \leq \ell}  - \inf \big\{ \Qcal_{t_n} (\mu):  \big \{\<\mu_{t_i},\tilde{G}_m\> -   \<\mu_{0},\tilde{G}_m\>\big\}_{i=1}^n  \in  B(\alphabf^j,3r) \big\}.\]

We first show that there exists some constant $C_0$ independent of $r,j,m$ such that, for $m$ large enough,
\begin{equation}\label{bound infimum}
\inf \big\{ \Qcal_{t_n} (\mu):  \big \{\<\mu_{t_i},\tilde{G}_m\> -   \<\mu_{0},\tilde{G}_m\>\big\}_{i=1}^n  \in  B(\alphabf^j,3r) \big\} \leq C_0.
\end{equation}
Indeed, let  $\mu^{n,j}_{t_n} := \mu_{t_n}^{n,\{t_i\}_{i=1}^n,\{\alpha^j_i\}_{i=1}^n}$ be as in the proof of \eqref{equ 5.2 current fdLDP lower bound},   where $\alpha^j_i$ is the $i$-th component of $\alphabf^j$ for $1 \leq i \leq n$.  By \eqref{pf main 1}, for $m$ large enough,
\[\big\{\<\mu^{n,j}_{t_n} (t_i)- \mu^{n,j}_{t_n} (0),\tilde{G}_m\> \big\}_{i=1}^n  \in B(\alphabf^j,3r).\]
Therefore, the infimum in \eqref{bound infimum} is bounded from above by 
\[\Qcal_{t_n} (\mu^{n,j}_{t_n}) = \frac{\sqrt{2\pi}}{4 \chi (\rho)} (\alphabf^j )^T A_{\{t_i\}_{i=1}^n}^{-1} \alphabf^j \leq \sup_{\alphabf \in \mathcal{K}} \frac{\sqrt{2\pi}}{4 \chi (\rho)} \alphabf^T A_{\{t_i\}_{i=1}^n}^{-1} \alphabf := C_0,\]
which proves \eqref{bound infimum}. 

For any $r,j,m$ and any $\varepsilon > 0$, there exists $\mu^{r,j,m}_\varepsilon$ such that
\begin{equation}\label{current_condition_1}
\big\{\<\mu^{r,j,m}_\varepsilon (t_i,\cdot) - \mu^{r,j,m}_\varepsilon (0,\cdot),\tilde{G}_m\> \big\}_{i=1}^n   \in B(\alphabf^j,3r),
\end{equation}
and
\begin{equation*}
\Qcal_{t_n} (\mu^{r,j,m}_\varepsilon) - \varepsilon \leq \inf \big\{ \Qcal_{t_n} (\mu):  \big \{\<\mu_{t_i},\tilde{G}_m\> -   \<\mu_{0},\tilde{G}_m\>\big\}_{i=1}^n  \in  B(\alphabf^j,3r) \big\}.
\end{equation*}
Moreover, by \eqref{bound infimum},
\begin{equation}\label{bound_Q_rjm}
	\sup_{r,j,m} \Qcal_{t_n} (\mu^{r,j,m}_\varepsilon)  \leq C_0 + \varepsilon.
\end{equation}
So far, we have shown that, for any $\varepsilon > 0$,
\begin{multline}\label{Q_limit}
\limsup_{N\rightarrow +\infty}\frac{N}{a_N^2}\log \P_{\nu_\rho} \left(\frac{1}{a_N}\left(J_{-1,0}(t_1N^2),\ldots,J_{-1,0}(t_nN^2)\right)\in \mathcal{K}\right) \\
\leq - \liminf_{ r \rightarrow 0} \liminf_{m\rightarrow+\infty} \min_{1 \leq j \leq \ell} \Qcal_{t_n} (\mu^{r,j,m}_\varepsilon) + \varepsilon = - \liminf_{ r \rightarrow 0} \liminf_{m\rightarrow+\infty} \Qcal_{t_n} (\mu^{r,\tilde{j},m}_\varepsilon) + \varepsilon,
\end{multline} 
where $\tilde{j}:=j (r,m,\varepsilon) := \arg  \min_{1 \leq j \leq \ell} \Qcal_{t_n} (\mu^{r,j,m}_\varepsilon)$. 

By \eqref{bound_Q_rjm}, we could extract a subsequence along which the limit on the right-hand side of  \eqref{Q_limit} is attained as $r \rightarrow 0, m \rightarrow +\infty$. According to Remark \ref{rmk:goodness of Qcal}, $\{\mu^{r,j,m}_\varepsilon:~r,j,m\}$ is a subset of the compact set $\mathbb{L}_{C_0+\varepsilon}$. Hence, by further extracting a subsequence, there exists $\mu_\varepsilon^*$ such that $\mu^{r,\tilde{j},m}_\varepsilon$ converges to $\mu_\varepsilon^*$ in $\mathcal{D} ([0,T],\Scal^\prime (\R))$, and  by the compactness of $\mathcal{K}$, there also exists some $\alphabf = (\alpha_1, \alpha_2, \ldots, \alpha_n) \in \mathcal{K}$ such that $\alphabf^{\tilde{j}}$ converges to $\alphabf$  along this subsequence. Then, by the lower-semicontinuity of $\Qcal_{t_n}$,  
\[\Qcal_{t_n} (\mu^*_\varepsilon) \leq \liminf_{ r \rightarrow 0} \liminf_{m\rightarrow+\infty} \Qcal_{t_n} (\mu^{r,\tilde{j},m}_\varepsilon),\]
and thus,
\begin{equation}\label{pf main_2}
\limsup_{N\rightarrow +\infty}\frac{N}{a_N^2}\log \P_{\nu_\rho} \left(\frac{1}{a_N}\left(J_{-1,0}(t_1N^2),\ldots,J_{-1,0}(t_nN^2)\right)\in \mathcal{K}\right) \leq - \Qcal_{t_n} (\mu^*_\varepsilon) + \varepsilon.
\end{equation}
Moreover, by Lemma \ref{lem:current condition approxiamation} below,
\begin{equation}\label{current_condition}
\int_{0}^\infty \big[\mu^*_\varepsilon (t_i,u) - \mu^*_\varepsilon (0,u) \big]du  = \alpha_i, \quad \forall 1 \leq i \leq n.
\end{equation}
Therefore, by Proposition \ref{prop:rateF_varF_2},
\begin{multline*}
	\limsup_{N\rightarrow +\infty}\frac{N}{a_N^2}\log \P_{\nu_\rho} \left(\frac{1}{a_N}\left(J_{-1,0}(t_1N^2),\ldots,J_{-1,0}(t_nN^2)\right)\in \mathcal{K}\right) \\
	\leq - \inf_{\alphabf \in \mathcal{K}}  \inf_{\mu} \Big\{\Qcal_{t_n} (\mu): \int_{0}^\infty \big[\mu (t_i,u) - \mu  (0,u) \big]du  = \alpha_i, \; \forall 1 \leq i \leq n\Big\} + \varepsilon \\
	= - \inf_{\alphabf \in \mathcal{K}}  \frac{1}{\sigma_J^2} \Ical_{\{t_j\}_{j=1}^n} (\alphabf) + \varepsilon.
\end{multline*}
We conclude the proof of  $\eqref{equ 5.1 current fdLDP upper bound}$ by letting $\varepsilon \rightarrow 0$. 
\end{proof}

It remains to prove the following lemma. 

\begin{lemma}\label{lem:current condition approxiamation}
	If the sequence $\mu^m \in \mathcal{D} ([0,T], \Scal' (\R))$, $m \geq 1$, satisfies that
	\[\lim_{m\rightarrow\infty} \mu^m = \mu\; \text{in $\mathcal{D} ([0,T],\mathcal{S}^\prime (\R))$}, \quad \sup_{m \geq 1} \Qcal_T (\mu_m) \leq C_0\]
	for some $\mu \in \mathcal{D} ([0,T], \Scal' (\R))$ and for some finite constant $C_0$, then for any $0 < t \leq T$,
	\[\lim_{m\rightarrow\infty} \int_\R [\mu^m(t,u) - \mu^m(0,u)] \tilde{G}_m (u)\,du = \int_0^\infty [\mu (t,u) - \mu (0,u)] \,du.\]
\end{lemma}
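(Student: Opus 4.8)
The plan is to combine the uniform bounds on the family $\{\mu^m\}$ coming from $\sup_m \Qcal_T(\mu^m) \leq C_0$ (as recorded in Remark \ref{rmk:goodness of Qcal}) with the explicit representation \eqref{mu formula} and the estimates already established in Lemma \ref{lem:mu property-2}. First I would note that by Remark \ref{rmk:goodness of Qcal} the sequence $\{\mu^m\}$ lies in the compact set $\mathbb{L}_{C_0}$, so $\Qcal_T(\mu) \leq C_0$ as well by lower semicontinuity, and in particular $\mu$ admits the representation of Lemma \ref{lem:mu property-1}: there are $\psi^m, \psi \in L^2(\R)$ and $H^m, H \in \mathcal{H}^1$ with uniformly bounded norms, $\|\psi^m\|_{L^2(\R)}^2 \leq 2\chi(\rho) C_0$ and $[H^m,H^m] \leq 2 C_0/\chi(\rho)$, and similarly for $\psi, H$. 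Since $\mu^m \to \mu$ in $\mathcal{D}([0,T],\mathcal{S}'(\R))$ while these data live in balls of the Hilbert spaces $L^2(\R)$ and $\mathcal{H}^1$, after passing to a subsequence we get weak convergence $\psi^m \rightharpoonup \psi$ in $L^2(\R)$ and $H^m \rightharpoonup H$ in $\mathcal{H}^1$; that the weak limits are exactly the data of the limit $\mu$ follows by testing the weak-formulation identity defining \eqref{mu PDE} against fixed test functions and passing to the limit. (It suffices to prove the claimed convergence along an arbitrary subsequence, so this reduction is harmless.)

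Next I would split the left-hand side as
\[
\int_\R [\mu^m(t,u) - \mu^m(0,u)]\tilde{G}_m(u)\,du = \int_0^\infty [\mu^m(t,u) - \mu^m(0,u)]\,du + R_m,
\]
where $R_m = \int_\R [\mu^m(t,u) - \mu^m(0,u)](\tilde{G}_m(u) - \mathbf{1}\{u \geq 0\})\,du$, using that $\tilde{G}_m$ was constructed to approximate $G_m(u) = (1-u/m)^+\mathbf{1}\{u\geq 0\}$ with $\|G_m - \tilde{G}_m\|_\infty \leq C/m$ and $\mathrm{supp}(G_m - \tilde{G}_m) \subseteq [-2m,2m]$. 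The first term on the right is handled by Lemma \ref{lem:mu property-2}, which gives it the closed form
\[
\int_0^t \int_\R \Big[-\tfrac12 p_s'(v)\psi^m(v) + \chi(\rho) p_{t-s}(v)\partial_v H^m(s,v)\Big]\,dv\,ds;
\]
since $p_s'(\cdot) \in L^2([0,t]\times\R)$ by \eqref{pre3} and $p_{t-s}(\cdot) \in L^2([0,t]\times\R)$, and since pairing an $L^2$ function against a weakly convergent sequence converges, this term passes to the limit and equals the corresponding expression for $(\psi, H)$, which by Lemma \ref{lem:mu property-2} again is exactly $\int_0^\infty [\mu(t,u)-\mu(0,u)]\,du$. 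So the whole statement reduces to showing $R_m \to 0$.

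For $R_m$, I would again invoke the representation \eqref{mu formula} for $\mu^m(t,\cdot) - \mu^m(0,\cdot)$ and bound, via Cauchy--Schwarz in $u$,
\[
|R_m| \leq \|\mu^m(t,\cdot)-\mu^m(0,\cdot)\|_{L^2(\R)}\,\|\tilde{G}_m - \mathbf{1}\{\cdot\geq 0\}\|_{L^2(\R)} + \text{(error from the region } |u|>2m\text{)}.
\]
Here one must be slightly careful because $\tilde{G}_m - \mathbf{1}\{u\geq 0\}$ is \emph{not} small in $L^2(\R)$: on $[-2m, 2m]$ it is $O(1/m)$ in sup norm so contributes $O(1/m \cdot \sqrt{m}) = O(m^{-1/2})$ to the $L^2$ norm, but for $u < -2m$ and $u > 2m$ it equals $-\mathbf{1}\{u \geq 0\}$, which has infinite $L^2$ mass. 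I would therefore split $R_m$ into $|u| \leq 2m$ and $|u| > 2m$. On $|u|\leq 2m$ the bound above works once we know $\sup_m \|\mu^m(t,\cdot)-\mu^m(0,\cdot)\|_{L^2(\R)} < \infty$, which follows from \eqref{mu formula} together with the uniform bounds $\|\psi^m\|_{L^2} \lesssim \sqrt{C_0}$ and $[H^m,H^m] \lesssim C_0$ and the fact that convolution with $p_t$ (resp. $p_{t-s}'$, integrated in $s$) is bounded on $L^2$. On $u > 2m$ one uses the decay of $\mu^m(t,u)$ as $u \to +\infty$: from \eqref{mu formula}, $\mu^m(t,u)$ is a Gaussian convolution of an $L^2$ datum, so $\big(\int_{u>2m} \mu^m(t,u)^2\,du\big)^{1/2}$ can be made small uniformly in $m$ — here one wants a bound like $\int_{|u|>M} \mu^m(t,u)^2\,du \to 0$ as $M \to \infty$ uniformly in $m$, which is the kind of tightness-in-space estimate underlying Remark \ref{rmk:mu property} and the $L^2$ continuity argument of Lemma \ref{lem:mu property-2}; the contribution from $\mathbf{1}\{u\geq 0\} = 1$ on $u \in (2m, \infty)$ is then controlled by this tail since $\int_{u > 2m} |\mu^m(t,u) - \mu^m(0,u)| \cdot 1\, du$ is dominated, after Cauchy--Schwarz against $p_{t-s}(\cdot-v)$-type kernels exactly as in \eqref{pre2}, by the $L^2$ tails of $\psi^m$ and $\partial_v H^m$, which are uniformly square-integrable. \textbf{The main obstacle} is exactly this last point: making the spatial tail estimates uniform in $m$. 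The cleanest route is to redo the computation in Lemma \ref{lem:mu property-2} keeping track of constants — replacing the limit ``$\int_\R f(M-u)g(u)\,du \to 0$'' there by the quantitative ``$|\int_{|u|>M/2} f(M-u)g(u)\,du| \leq \|f\|_{L^2}(\int_{|v|>M/4} g(v)^2\,dv)^{1/2} + \ldots$'' and then using that $\{\psi^m\}$ and $\{\partial_v H^m(s,\cdot)\}$, being norm-bounded, need not be uniformly integrable a priori — so one instead passes to the weakly convergent subsequence first and uses that $\psi^m \rightharpoonup \psi$ together with $\|\psi^m\|_{L^2}\to\|\psi\|_{L^2}$ (a consequence of $\Qcal$-convergence along the sequence realizing the liminf, which one may also assume) upgrades weak to \emph{strong} $L^2$ convergence, restoring uniform integrability and closing the estimate.
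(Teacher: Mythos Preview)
Your approach has two real problems.

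First, a computational slip: you claim that $\tilde G_m - \mathbf 1\{u\ge 0\}$ is $O(1/m)$ in sup norm on $[-2m,2m]$, but this is false. Recall $G_m(u)=(1-u/m)^+\mathbf 1\{u\ge 0\}$, so $G_m(u)-\mathbf 1\{u\ge 0\}=-u/m$ on $[0,m]$ and $-1$ on $[m,\infty)$; thus $\tilde G_m - \mathbf 1\{u\ge 0\}$ has sup norm of order $1$ (not $1/m$) on $[-2m,2m]$, and $L^2$ norm there of order $\sqrt m$. Your Cauchy--Schwarz bound on the ``$|u|\le 2m$'' piece of $R_m$ therefore does not give $o(1)$.

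Second, and more seriously, even after repairing the splitting, your route genuinely needs a uniform-in-$m$ spatial tail estimate of the type $\int_m^\infty[\mu^m(t,u)-\mu^m(0,u)]\,du\to 0$ (and the analogue of Remark~\ref{rmk:mu property} uniformly in $m$). You identify this and propose to upgrade $\psi^m\rightharpoonup\psi$ to strong $L^2$ convergence via $\|\psi^m\|\to\|\psi\|$, arguing that $\Qcal$-values converge ``along the sequence realizing the liminf''. But the lemma's hypotheses do not give you $\Qcal_T(\mu^m)\to\Qcal_T(\mu)$; even in the application only $\liminf \Qcal_T(\mu^m)\ge \Qcal_T(\mu)$ is available from lower semicontinuity, and there is no reason for equality. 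So the strong-convergence upgrade is unjustified, and with it the uniform integrability of $\{(\psi^m)^2\}$ and $\{(\partial_v H^m)^2\}$ that your tail bound would require.

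The paper avoids both issues by a different decomposition: write $\tilde G_m=\tilde G_n+(\tilde G_m-\tilde G_n)$ with $n$ fixed, let $m\to\infty$ first, then $n\to\infty$. For the cross term they do \emph{not} estimate tails of $\mu^m$; instead they replace $\tilde G_m-\tilde G_n$ by $G_m-G_n$, pass to an $\mathcal A$-approximation $\tilde\mu^m$ of $\mu^m$, and integrate by parts using the current $\tilde J^m$ (via $\partial_t\tilde\mu^m+\partial_u\tilde J^m=0$). Because $(G_m-G_n)'$ equals $1/n-1/m$ on $[0,n]$ and $-1/m$ on $[n,m]$, Cauchy--Schwarz against the \emph{uniformly} $L^2$-bounded $\tilde J^m$ gives $C(n^{-1/2}+m^{-1/2})$. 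The point is that the $\Qcal$-bound controls $\|J^m\|_{L^2}$ directly, so no tail uniformity for $\mu^m$ is ever needed. For the remaining term with fixed $n$, one passes $m\to\infty$ using only convergence in $\mathcal S'(\R)$, then sends $n\to\infty$ using Lemma~\ref{lem:mu property-2} and Remark~\ref{rmk:mu property} for the single limit $\mu$.
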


\begin{proof}
For any $m \geq n$, we write
	\begin{multline}\label{current approx 1}
		\int_\R [\mu^m(t,u) - \mu^m(0,u)] \tilde{G}_m (u)\,du \\
		=  	\int_\R [\mu^m(t,u) - \mu^m(0,u)] [\tilde{G}_m (u) - \tilde{G}_n (u)] \,du  + 	\int_\R [\mu^m(t,u) - \mu^m(0,u)] \tilde{G}_n (u) \,du.
	\end{multline}

We first prove that the first term on the right-hand side in the last identity converges to zero as $m \rightarrow \infty, n \rightarrow \infty$. By Lemma \ref{lem:mu property-1}, Eqn.\,\eqref{mu formula} and the uniform boundedness of $\Qcal_T (\mu_m)$, there exists some constant $C_1$ such that, for any $t > 0$, 
\[\sup_{m \geq 1} \|\mu^m(t,\cdot)\|_{L^2 (\R)} \leq C_1.\]
Since $\|G_m - \tilde{G}_m\|^2_{L^2 (\R)} \leq C m^{-1}$, by Cauchy-Schwarz inequality, we only need to show that
\[\lim_{n \rightarrow \infty}\,\lim_{m\rightarrow\infty}\, \int_\R [\mu^m(t,u) - \mu^m(0,u)] [G_m (u) - G_n (u)] \,du = 0.\]
We could also find $\tilde{\mu}^m \in \Acal$ such that $\|\tilde{\mu}^m (t,\cdot) - \mu^m (t, \cdot)\|_{L^2 (\R)}^2 \leq C m^{-2}$. Since $\|G_m \|^2_{L^2 (\R)} \leq C m$, using Cauchy-Schwarz inequality again, it is sufficient to show that
\begin{equation}\label{current approx 2}
\lim_{n \rightarrow \infty}\,\lim_{m\rightarrow\infty}\, \int_\R [\tilde{\mu}^m(t,u) - \tilde{\mu}^m(0,u)] [G_m (u) - G_n (u)] \,du = 0.
\end{equation}
Let $\tilde{J}^m$ be the current associated with $\tilde{\mu}^m$, \emph{i.e.}, $\partial_{t} \tilde{\mu}^m + \partial_{u} \tilde{J}^m = 0$. Since 
\[G_m (u) - G_n (u) = \begin{cases}
	0 &\quad \text{if} \quad u \leq 0\;\text{or}\; u \geq m;\\ 
	\frac{u}{n} - \frac{u}{m} &\quad \text{if} \quad 0 \leq u \leq n;\\
	1 - \frac{u}{m} &\quad \text{if} \quad n \leq u \leq m,
\end{cases}\]
using integration by parts formula, we rewrite the left-hand side in \eqref{current approx 2} as
\[\Big(\frac{1}{n} - \frac{1}{m}\Big) \int_{0}^t \int_0^n \tilde{J}^m (s,u) du ds - \frac{1}{m}\int_{0}^t \int_n^m \tilde{J}^m (s,u) du ds.\]
Note that by \eqref{J} and the uniform boundedness of $\Qcal_T (\mu_m)$,  there exists some constant $C_2$ such that $\sup_{0 \leq t \leq T} \sup_{m \geq 1} \|\tilde{J}^m (t,\cdot)\|_{L^2 (\R)} \leq C_2$. Then,  by Cauchy-Schwarz inequality, the last expression is bounded by $C (n^{-1/2} + m^{-1/2})$, concluding the proof of  \eqref{current approx 2}. 

For the second term in \eqref{current approx 1}, letting $m \rightarrow \infty$, it converges to
\[\int_\R [\mu(t,u) - \mu (0,u)] \tilde{G}_n (u) du.\]
Note that $\mu(t,\cdot)$ may not be in $L^1 (\R)$, and thus we cannot use dominated convergence theorem directly.  Instead, we first replace $\tilde{G}_n (u)$ with $G_n (u)$ as in the above argument, and then only need to deal with 
\[\int_\R [\mu(t,u) - \mu (0,u)] G_n (u) du = \int_0^n [\mu(t,u) - \mu (0,u)] du - \frac{1}{n}\int_0^n u [\mu(t,u) - \mu (0,u)] \,du\]
by the definition of $G_n$.  At last, letting $n \rightarrow \infty$ and using Lemma \ref{lem:mu property-2} and Remark \ref{rmk:mu property}, we conclude  the proof.
\end{proof}

\subsection{Proof of Lemma \ref{lemma finite dimensional MDP of tagged particle}} We start with the lower bound \eqref{equ 5.4 tagged fdLDP lower bound}. 

\begin{proof}[Proof of \eqref{equ 5.4 tagged fdLDP lower bound}]
Observe that if $J_{-1,0}(t)\geq 0$, then
\begin{equation}\label{equ 5.3.1}
J_{-1,0}(t)=\sum_{x=0}^{X(t)-1}\eta_t(x)=\sum_{x=0}^{X(t)-1}\left(\eta_t(x)-\rho\right)+\rho X(t),
\end{equation}
and if $J_{-1,0}(t)<0$, then 
\begin{equation}\label{equ 5.3.2}
J_{-1,0}(t)=-\sum_{x=X(t)}^{-1}\eta_t(x)=-\sum_{x=X(t)}^{-1}\left(\eta_t(x)-\rho\right)+\rho X(t).
\end{equation}
For any given $\alphabf=(\alpha_1,\ldots,\alpha_n)^T\in \Ocal$ and any $\varepsilon>0$, by \eqref{equ 5.2 current fdLDP lower bound},
\begin{align}\label{equ 5.3.3}
&\liminf_{N\rightarrow+\infty}\frac{N}{a_N^2}\log\P_{\nu_\rho^*} \left(\frac{1}{a_N}\left(J_{-1,0}(t_1N^2),\ldots,J_{-1,0}(t_nN^2)\right)\in B(\rho\alphabf, \varepsilon\rho/10)\right)\\
&\geq -\inf_{\betabf \in B(\rho\alphabf , \varepsilon\rho/10)}\frac{1}{\sigma_J^2}\mathcal{I}_{_{\{t_j\}_{j=1}^n}}(\betabf)>-\infty. \notag
\end{align}
For $M>0$, let $D_{M,N}$ be the event that $|X(t_jN^2)|\leq a_NM$ for all $1\leq j\leq n$. Using \eqref{et7}, we have
\[
\limsup_{M\rightarrow+\infty}\limsup_{N\rightarrow+\infty}\frac{N}{a_N^2}\log\P_{\nu_{\rho}^*} \left(D_{M,N}^c\right)=-\infty.
\]
Hence, there exists $M_1>0$ such that
\begin{multline}\label{equ 5.3.4}
\liminf_{N\rightarrow+\infty}\frac{N}{a_N^2}\log\P_{\nu_{\rho}^*} \left(\frac{1}{a_N}\left(J_{-1,0}(t_1N^2),\ldots,J_{-1,0}(t_nN^2)\right)\in B(\rho\alphabf, \varepsilon\rho/10), D_{M_1,N}\right) \\
=\liminf_{N\rightarrow+\infty}\frac{N}{a_N^2}\log\P_{\nu_{\rho}^*} \left(\frac{1}{a_N}\left(J_{-1,0}(t_1N^2),\ldots,J_{-1,0}(t_nN^2)\right)\in B(\rho\alphabf, \varepsilon\rho/10)\right) \\
\geq -\inf_{\betabf\in B(\rho\alphabf, \varepsilon\rho/10)}\frac{1}{\sigma_J^2}\mathcal{I}_{_{\{t_j\}_{j=1}^n}}(\betabf).
\end{multline}
Let $F_{N}$ be the event that $|\frac{1}{a_N}\sum_{x=0}^{k}(\eta_{t_jN^2} (x)-\rho)|<\frac{\rho\varepsilon}{10}$
and $|\frac{1}{a_N}\sum_{x=-k}^{-1}(\eta_{t_jN^2} (x)-\rho)|<\frac{\rho\varepsilon}{10}$ for all $1\leq k\leq M_1a_N$ and all $1\leq j\leq n$. By Remark \ref{rmk:coupling} and standard large deviation results, we have
\[
\limsup_{N\rightarrow+\infty}\frac{1}{a_N}\log \P_{\nu_\rho^*}\left(F_{N}^c\right)<0
\]
and thus
\begin{equation}\label{equ 5.3.5}
 \limsup_{N\rightarrow+\infty}\frac{N}{a_N^2}\log  \P_{\nu_\rho^*} \left(F_{N}^c\right)=-\infty.   
\end{equation}
Using \eqref{equ 5.3.4} and \eqref{equ 5.3.5}, we have
\begin{align}\label{equ 5.3.6}
  &\liminf_{N\rightarrow+\infty}\frac{N}{a_N^2}\log\P_{\nu_\rho^*}  \left(\frac{1}{a_N}\left(J_{-1,0}(t_1N^2),\ldots,J_{-1,0}(t_nN^2)\right)\in B(\rho\alphabf, \varepsilon\rho/10), D_{M_1,N}, F_{N}\right) \notag\\  
  &\geq -\inf_{\betabf\in B(\rho\alphabf, \varepsilon\rho/10)}\frac{1}{\sigma_J^2}\mathcal{I}_{_{\{t_j\}_{j=1}^n}}(\betabf).
\end{align}
Conditioned on the event $D_{M_1,N}\bigcap F_{N}$, according to \eqref{equ 5.3.1} and \eqref{equ 5.3.2}, if \[\frac{1}{a_N}\left(J_{-1,0}(t_1N^2),\ldots,J_{-1,0}(t_nN^2)\right)\in B(\rho\alphabf, \varepsilon\rho/10),\] 
then
\[
\left|\frac{1}{a_N}X(t_jN^2)-\alpha_j\right|<\varepsilon/5<\varepsilon, \quad \forall 1\leq j\leq n.
\]
Consequently, for sufficiently small $\varepsilon > 0$ such that $B(\alphabf,\varepsilon)\subseteq \Ocal$, we have
\begin{align*}
&\liminf_{N\rightarrow +\infty}\frac{N}{a_N^2}\log \P_{\nu_\rho^*}  \left(\frac{1}{a_N}\left(X(t_1N^2),\ldots,X(t_nN^2)\right)\in \Ocal\right)\\
&\geq \liminf_{N\rightarrow +\infty}\frac{N}{a_N^2}\log \P_{\nu_\rho^*}  \left(\frac{1}{a_N}\left(X(t_1N^2),\ldots,X(t_nN^2)\right)\in B(\alphabf, \varepsilon)\right)\\
&\geq \liminf_{N\rightarrow+\infty}\frac{N}{a_N^2}\log\P_{\nu_\rho^*} \left(\frac{1}{a_N}\left(J_{-1,0}(t_1N^2),\ldots,J_{-1,0}(t_nN^2)\right)\in B(\rho\alphabf, \varepsilon\rho/10), D_{M_1,N}, F_{N}\right)\\
&\geq -\inf_{\betabf\in B(\rho\alphabf, \varepsilon\rho/10)}\frac{1}{\sigma_J^2}\mathcal{I}_{_{\{t_j\}_{j=1}^n}}(\betabf)
\end{align*}
Since $\varepsilon$ is arbitrary and $\mathcal{I}_{_{\{t_j\}_{j=1}^n}}$ is continuous, let $\varepsilon\rightarrow 0$, we have
\[
\liminf_{N\rightarrow +\infty}\frac{N}{a_N^2}\log \P_{\nu_\rho^*}  \left(\frac{1}{a_N}\left(X(t_1N^2),\ldots,X(t_nN^2)\right)\in \Ocal \right)\geq -\frac{1}{\sigma_J^2}\mathcal{I}_{_{\{t_j\}_{j=1}^n}}(\rho\alphabf)=-\frac{1}{\sigma_X^2}\mathcal{I}_{_{\{t_j\}_{j=1}^n}}(\alphabf).
\]
Since $\alphabf \in \Ocal$ is arbitrary, 
\begin{align*}
\liminf_{N\rightarrow +\infty}\frac{N}{a_N^2}\log \P_{\nu_\rho^*}  \left(\frac{1}{a_N}\left(X(t_1N^2),\ldots,X(t_nN^2)\right)\in \Ocal\right)&\geq \sup_{\alphabf\in \Ocal}\left(-\frac{1}{\sigma_X^2}\mathcal{I}_{_{\{t_j\}_{j=1}^n}}(\alphabf)\right)\\
&=-\inf_{\alphabf \in \Ocal}\frac{1}{\sigma_X^2}\mathcal{I}_{_{\{t_j\}_{j=1}^n}}(\alphabf)
\end{align*}
and the proof is completed.
\end{proof}

At last, we prove the upper bound \eqref{equ 5.3 tagged fdLDP upper bound}.

\begin{proof}[Proof of \eqref{equ 5.3 tagged fdLDP upper bound}]

According to Lemma \ref{lem:tagged particle exponential tight}, we only need to prove \eqref{equ 5.3 tagged fdLDP upper bound} for all compact set
$\mathcal{K} \subseteq \mathbb{R}^n$. For any compact set $\mathcal{K} \subseteq \mathbb{R}^n$, there exists $M_2>0$ such that $\mathcal{K} \subseteq \bar{B}_{M_2}$. For given $0<\varepsilon<1$ and each $N\geq 1$, we denote by $V_N$ the event that $|\frac{1}{a_N}\sum_{x=0}^{k}\left(\eta_{t_jN^2} (x)-\rho\right)|\leq \varepsilon$ and $|\frac{1}{a_N}\sum_{x=-k}^{-1}\left(\eta_{t_jN^2} (x)-\rho\right)|\leq \varepsilon$ for all $1\leq k\leq (M_2+1)a_N$ and $1\leq j\leq n$. Then, similar with \eqref{equ 5.3.5}, we have
\begin{equation*}
\limsup_{N\rightarrow+\infty}\frac{N}{a_N^2}\log\P_{\nu_\rho^*}(V_N^c)=-\infty
\end{equation*}
and thus
\begin{multline}\label{equ 5.3.7}
\limsup_{N\rightarrow +\infty}\frac{N}{a_N^2}\log \P_{\nu_\rho^*}\left(\frac{1}{a_N}\left(X(t_1N^2),\ldots,X(t_nN^2)\right)\in \mathcal{K}\right)\\
=\limsup_{N\rightarrow +\infty}\frac{N}{a_N^2}\log \P_{\nu_\rho^*}\left(\frac{1}{a_N}\left(X(t_1N^2),\ldots,X(t_nN^2)\right)\in \mathcal{K}, V_N\right).
\end{multline}
Conditioned on $V_N$, if $\frac{1}{a_N}\left(X(t_1N^2),\ldots,X(t_nN^2)\right)\in \mathcal{K}$, then, by \eqref{equ 5.3.1} and \eqref{equ 5.3.2}, 
\[
\frac{1}{a_N}\left(J_{-1,0}(t_1N^2),\ldots,J_{-1,0}(t_nN^2)\right)\in \left(\rho \mathcal{K}\right)_\varepsilon,
\]
where $\rho \mathcal{K}=\{\rho\betabf:~\betabf\in \mathcal{K}\}$ and for some subset $\Acal \subset \R^n$ and some $a > 0$,
\[
\Acal_a:=\{\betabf \in \mathbb{R}^n:~\|\betabf-\gammabf\|_\infty\leq a \text{~for some~}\gammabf \in \Acal\}.
\]
 Since $\mathcal{K}$ is compact, $(\rho \mathcal{K})_\varepsilon$ is also compact for all $\varepsilon>0$. As a result, by  \eqref{equ 5.1 current fdLDP upper bound}, for  any $0<\varepsilon<1$,
\begin{multline*}
\limsup_{N\rightarrow +\infty}\frac{N}{a_N^2}\log \P_{\nu_\rho^*}\left(\frac{1}{a_N}\left(X(t_1N^2),\ldots,X(t_nN^2)\right)\in \mathcal{K}, V_N\right)\\
\leq \limsup_{N\rightarrow +\infty}\frac{N}{a_N^2}\log \P_{\nu_\rho^*}\left(\frac{1}{a_N}\left(J_{-1,0}(t_1N^2),\ldots,J_{-1,0}(t_nN^2)\right)\in (\rho \mathcal{K})_\varepsilon \right)\\
\leq -\inf_{\betabf\in (\rho \mathcal{K})_\epsilon}\frac{1}{\sigma_J^2}\mathcal{I}_{_{\{t_j\}_{j=1}^n}}(\betabf).
\end{multline*}
Since $\mathcal{I}_{_{\{t_j\}_{j=1}^n}}$ is uniformly continuous on $\bar{B}_{M_2+1}$ and $(\rho K)_\epsilon$ is compact,
\[
\lim_{\epsilon\rightarrow 0}\inf_{\betabf\in (\rho \mathcal{K})_\epsilon}\frac{1}{\sigma_J^2}\mathcal{I}_{_{\{t_j\}_{j=1}^n}}(\betabf)
=\inf_{\betabf\in \rho \mathcal{K}}\frac{1}{\sigma_J^2}\mathcal{I}_{_{\{t_j\}_{j=1}^n}}(\betabf) =\inf_{\alphabf \in \mathcal{K}}\frac{1}{\sigma_X^2}\mathcal{I}_{_{\{t_j\}_{j=1}^n}}(\alphabf).
\]
Therefore, by \eqref{equ 5.3.7},
\[
\limsup_{N\rightarrow +\infty}\frac{N}{a_N^2}\log \P_{\nu_\rho^*} \left(\frac{1}{a_N}\left(X(t_1N^2),\ldots,X(t_nN^2)\right)\in \mathcal{K}\right)
\leq -\inf_{\alphabf \in \mathcal{K}}\frac{1}{\sigma_X^2}\mathcal{I}_{_{\{t_j\}_{j=1}^n}}(\alphabf),
\]
and the proof is completed.

\end{proof}

\appendix

\section{Calculations}\label{sec:calculations}

\subsection{Proof of Lemma \ref{lem:mu property-1}}\label{app:proof lem mu p-1} We only prove the statement for $\Qcal_{T,dyn}$, and the remaining is much simpler. If $\Qcal_{T,dyn} (\mu) < \infty$, then there exists some constant $A > 0$ such that, for any $G \in \Ccal_c^{1,\infty} ([0,T] \times \R)$,
\[l (\mu,G) - \frac{\chi(\rho)}{2} [G,G] \leq A.\]
Replacing $G$ with $aG$ for any $a \in \R$, we have that
\[a l (\mu,G) - a^2 \frac{\chi(\rho)}{2} [G,G] \leq A, \quad \forall a \in \R.\]
Optimizing over $a$,
\[l (\mu,G)^2 \leq 2 \chi(\rho) A [G,G], \quad \forall G \in \Ccal_c^{1,\infty} ([0,T] \times \R).\]
Since $\Ccal_c^{1,\infty} ([0,T] \times \R)$ is dense in $\mathcal{H}^1$, $l (\mu,\cdot)$ is a linear bounded functional on $\mathcal{H}^1$. Thus, there exists some function $H \in \mathcal{H}^1$ such that
\[l (\mu,G) = \chi (\rho) [H,G], \quad \forall G \in \mathcal{H}^1.\]
In particular,
\[\Qcal_{T,dyn} (\mu) = \chi (\rho) \sup_{G \in \Ccal^{1,\infty}_c ( [0,T]  \times \R)} \Big\{ [H,G] - \frac{1}{2} [G,G] \Big\} = \frac{\chi(\rho)}{2} [H,H],\]
which concludes the proof.

\subsection{Proof of \eqref{pre3}}\label{app pf pre3} In this subsection, we prove that, for any $t > 0$,
\[\int_0^t {p}_s^{\prime} (\cdot)\,ds  \in L^2 (\R),\]
where $p_s (\cdot)$ is the heat kernel. By direct calculations, 
\[p_s^\prime (u) = - \frac{u}{s} p_s (u).\]
Making the change of variables $u^2/2s \mapsto \tau$, 
\[\int_0^t {p}_s^{\prime} (\cdot)\,ds = \int_0^t - \frac{u}{\sqrt{2\pi}s^{3/2}} e^{-u^2/(2s)} ds = -\int_{u^2/2t}^{+\infty}  \frac{1}{\sqrt{\pi \tau}} e^{-\tau}\,d\tau. \]
Thus,
\[\Big| \int_0^t {p}_s^{\prime} (u)\,ds \Big| \leq C \min \Big\{ \frac{\sqrt{t}}{|u|} e^{-u^2/(2t)}, 1\Big\}\]
for some finite constant $C$, which concludes the proof.

\subsection{Proof of \eqref{q_mu}}\label{appendix:pf_q_mu}  Recall in Lemma \ref{lem:mu property-1} we have shown that
\[\mathcal{Q}_{T,d y n}(\mu)=\frac{\chi(\rho)}{2} \int_{0}^{T} \int_{\mathbb{R}}\left(\partial_{u} H\right)^{2}(t, u) d u d t.\]
Replacing $\partial_{u} H$ with $\chi(\rho)^{-1}  [J+\tfrac{1}{2} \partial_{u} \mu]$, we have
\[\chi (\rho)\mathcal{Q}_{T,d y n}(\mu)= \int_{0}^{T} \int_{\mathbb{R}}\left\{\frac{1}{2} J^{2}(t, u)+\frac{1}{8}\left(\partial_{u} \mu\right)^{2}(t, u)+\frac{1}{2} J(t, u) \partial_{u} \mu(t, u)\right\} d u d t.\]
Integrating by parts and using $\partial_t \mu + \partial_{u} J = 0$, 
\[ \int_{0}^{T} \int_{\mathbb{R}} \frac{1}{2} J(t, u) \partial_{u} \mu(t, u) d u d t= \frac{1}{4} \int_{\mathbb{R}}\left\{\mu^{2}(T, u)-\mu_{0}^{2}(u)\right\} d u.\]
Thus,
\[\chi (\rho) \mathcal{Q}_{T,d y n}(\mu)=\int_{0}^{T} \int_{\mathbb{R}}\left\{\frac{1}{2} J^{2}(t, u)+\frac{1}{8}\left(\partial_{u} \mu\right)^{2}(t, u)\right\} d u d t+\frac{1}{4} \int_{\mathbb{R}}\left\{\mu^{2}(T, u)-\mu_{0}^{2}(u)\right\} d u.\]
Finally, we obtain \eqref{q_mu} by using \eqref{mu_K} and the formula for $\Qcal_0$ in Lemma \ref{lem:mu property-1}.

\subsection{Proof of \eqref{current_fourier}}\label{appendix:pf_current_fourier} Recall that 	for $0 \leq s \leq t$,
\[\Fcal  K^{t,\alpha}_T (s,\xi) = \frac{\alpha}{2 \sqrt{t} \xi^2} \big[e^{-(t-s) \xi^2/2} - e^{-s\xi^2/2} + 1 - e^{-t\xi^2/2}\big],\]
and for $t \leq s \leq T$, 
\[\Fcal  K^{t,\alpha}_T (s,\xi) =\frac{\alpha}{2 \sqrt{t} \xi^2} \big[e^{-(s-t) \xi^2/2} - e^{-s\xi^2/2} + 1 - e^{-t\xi^2/2}\big].\]
Using the integeral formula
\begin{equation}\label{integral_formula}
\int_{\R} \frac{1-e^{-ax^2}}{x^2} dx = 2 \sqrt{\pi a}, \quad a > 0,
\end{equation}
one easily proves \eqref{current_fourier}.  To prove \eqref{integral_formula}, the left-hand side of the identity equals
\[\int_{\R} \int_0^a e^{-y x^2} dy dx = \int_0^a \frac{\sqrt{\pi}}{\sqrt{y}} dy = 2 \sqrt{\pi a},\]
as claimed.

\subsection{Proof of \eqref{k_n_mu_n}}\label{appendix:k_nmu_n}
We prove \eqref{k_n_mu_n} by calculus of variations.   
For any $\mu_0,\hat{\mu}_0\in \Ccal_c^2(\R)$ and $K, \hat{K}\in \Ccal^{1,2}_c\left([0, T]\times \mathbb{R}\right)$, we denote by $\Lambda\left((\mu_0, K),(\hat{\mu}_0, \hat{K})\right)$ the positive-definite quadratic form
\begin{align*}
	&\int_0^T\int_{\mathbb{R}}\frac{1}{2}\partial_tK(t,u)\partial_t\hat{K}(t,u)dudt
	+\int_0^T\int_{\mathbb{R}}\frac{1}{8}\left(\partial_u\mu_0(u)-\partial_u^2K(t,u)\right)\left(\partial_u\hat{\mu}_0(u)-\partial_u^2\hat{K}(t,u)\right)dudt\\
	&+\int_\mathbb{R}\frac{1}{2}\left(\mu_0(u)-\frac{1}{2}\partial_uK(T,u)\right)\left(\hat{\mu}_0(u)-\frac{1}{2}\partial_u\hat{K}(T,u)\right)du
	+\int_\mathbb{R}\frac{1}{8}\partial_uK(T,u)\partial_u\hat{K}(T,u)du.
\end{align*}
Then, using \eqref{q_mu}, we have
\begin{equation}\label{equ:self inner product}
	\chi(\rho)\Qcal_T(\mu_0,K)=\Lambda\left((\mu_0, K), (\mu_0, K)\right).  
\end{equation}

For any given $\hat{\mu}_0\in \Ccal_c^2(\mathbb{R})$ and $\hat{K}\in \Ccal_c^{1,2}\left([0, T]\times\mathbb{R}\right)$ such that $\hat{K}(0,\cdot)=0$ and $\hat{K}(t_j, 0)=0$, we define
\[
G_j(\epsilon)=\Lambda\left((\mu_{T,0}^{t_j,1}, K_T^{t_j,1})+\epsilon(\hat{\mu}_0,\hat{K}),(\mu_{T,0}^{t_j,1}, K_T^{t_j,1})+\epsilon(\hat{\mu}_0,\hat{K})\right)
\]
for all $\epsilon\in \mathbb{R}$. By Lemma \ref{lem:Q_2}, 
\[
\frac{d}{d\epsilon}G_j(\epsilon)\Big|_{\epsilon=0}=0
\]
and hence
\begin{equation}\label{equ:inner product is zero}
	\Lambda((\mu_{T,0}^{t_j,1}, K_T^{t_j,1}),(\hat{\mu}_0, \hat{K}))=0.
\end{equation}
For any given $\mu_0\in \Ccal_c^2(\mathbb{R})$ and $K\in \Ccal_c^{1,2}\left([0, T]\times\mathbb{R}\right)$ such that $K(0,\cdot)=0$ and $K(t_j,0)=\alpha_j$ for all $1\leq j\leq n$, let
\[
\tilde{K}=K-\sum_{j=1}^{n} \beta_j K_T^{t_j,1}\text{~and~}\tilde{\mu}_0=\mu_0-\sum_{j=1}^{n} \beta_j \mu_{T,0}^{t_j,1},
\]
where $\{\beta_j\}_{j\geq 1}$ satisfy \eqref{equ:constraints}, then
\[
\tilde{K}(0,\cdot)=0 \text{~and~}\tilde{K}(t_j,0)=0
\]
for all $1\leq j\leq n$. Consequently, by \eqref{equ:inner product is zero},
\begin{equation}\label{equ: inner product is zero 2}
	\Lambda\left(\left(\sum_{j=1}^{n} \beta_j \mu_{T,0}^{t_j,1}, \sum_{j=1}^{n} \beta_j K_T^{t_j,1}\right),(\tilde{\mu}_0, \tilde{K})\right)=0.
\end{equation}
For any $\epsilon\in \mathbb{R}$, let
\[
\tilde{G}(\epsilon)=\Lambda\left(\left(\sum_{j=1}^{n} \beta_j \mu_{T,0}^{t_j,1}, \sum_{j=1}^{n} \beta_j K_T^{t_j,1}\right)+\epsilon(\tilde{\mu}_0, \tilde{K}),\left(\sum_{j=1}^{n} \beta_j \mu_{T,0}^{t_j,1}, \sum_{j=1}^{n} \beta_j K_T^{t_j,1}\right)+\epsilon(\tilde{\mu}_0, \tilde{K}) \right).
\]
By \eqref{equ: inner product is zero 2}, for all $\epsilon\in\mathbb{R}$,
\begin{align*}
	\tilde{G}(\epsilon)
	&=\Lambda\left(\left(\sum_{j=1}^{n} \beta_j \mu_{T,0}^{t_j,1}, \sum_{j=1}^{n} \beta_j K_T^{t_j,1}\right), \left(\sum_{j=1}^{n} \beta_j \mu_{T,0}^{t_j,1}, \sum_{j=1}^{n} \beta_j K_T^{t_j,1}\right)\right)+\epsilon^2\Lambda(\tilde{\mu}_0, \tilde{K})\\
	&\geq\Lambda\left(\left(\sum_{j=1}^{n} \beta_j \mu_{T,0}^{t_j,1}, \sum_{j=1}^{n} \beta_j K_T^{t_j,1}\right), \left(\sum_{j=1}^{n} \beta_j \mu_{T,0}^{t_j,1}, \sum_{j=1}^{n} \beta_j K_T^{t_j,1}\right)\right).
\end{align*}
Therefore,
\[
\Lambda\left((\mu_0, K), (\mu_0, K)\right)=\tilde{G}(1)\geq \Lambda\left(\left(\sum_{j=1}^{n} \beta_j \mu_{T,0}^{t_j,1}, \sum_{j=1}^{n} \beta_j K_T^{t_j,1}\right), \left(\sum_{j=1}^{n} \beta_j \mu_{T,0}^{t_j,1}, \sum_{j=1}^{n} \beta_j K_T^{t_j,1}\right)\right)
\]
and \eqref{k_n_mu_n} follows from \eqref{equ:self inner product}.

\subsection{Proof of \eqref{q_jk}}\label{appendix:pf_q_jk} As in the proof of Proposition \ref{prop:Q_finiteD}, $q_{jk}$ is the real part of 
\begin{multline}\label{q_jk_1}
	\int_0^T \int_{\R} \Big\{ \frac{1}{2}  \partial_{s} \Fcal K_T^{t_j,1} (s,\xi) \overline{\partial_{s} \Fcal K_T^{t_k,1} (s,\xi)}
	\\
	+ \frac{1}{8} \big[- i \xi \Fcal \mu_{T,0}^{t_j,1} (\xi) + \xi^2  \Fcal K_T^{t_j,1} (s,\xi) \big] \overline{\big[- i \xi \Fcal \mu_{T,0}^{t_k,1} (\xi) + \xi^2  \Fcal K_T^{t_k,1} (s,\xi) \big]} \Big\} d\xi\,ds\\
	+ \frac{1}{4} \int_{\R} \Big\{\Fcal \mu_{T,0}^{t_j,1} (\xi) \overline{\Fcal \mu_{T,0}^{t_k,1} (\xi)} \\
	+ \big[\Fcal \mu_{T,0}^{t_j,1}  (\xi)  + i \xi  \Fcal K_{T}^{t_j,1} (T,\xi) \big] \overline{\big[\Fcal \mu_{T,0}^{t_k,1}  (\xi)  + i \xi  \Fcal K_{T}^{t_k,1} (T,\xi) \big]}\Big\} d \xi.
\end{multline}
Also recall from (P1) and (P2) after the proof of Lemma \ref{lem:Q_2} that
\begin{align*}
	\Fcal \mu^{t,\alpha}_{T,0} (\xi) &= (-i\xi) \frac{\alpha}{2\sqrt{t} \xi^2} [1-e^{-t\xi^2/2}],\\
	\Fcal K^{t,\alpha}_{T} (s, \xi) &= \frac{\alpha}{2\sqrt{t} \xi^2} [1-e^{-t\xi^2/2} + e^{-(t-s)\xi^2/2} -e^{-s\xi^2/2}], \quad 0 \leq s \leq t,\\
	\Fcal K^{t,\alpha}_{T} (s, \xi) &= \frac{\alpha}{2\sqrt{t} \xi^2} [1-e^{-t\xi^2/2} + e^{-(s-t)\xi^2/2} -e^{-s\xi^2/2}], \quad t \leq s \leq T.
\end{align*}
Without loss of generality, assume $t_j < t_k$.  Since for $0 \leq s \leq t$, 
\begin{align*}
	\partial_s \Fcal K^{t,\alpha}_{T} (s, \xi) &= \frac{\alpha}{4 \sqrt{t}} [ e^{-(t-s)\xi^2/2} + e^{-s\xi^2/2}],\\
	- i \xi \Fcal \mu^{t,\alpha}_{T,0} (\xi) + \xi^2 \Fcal K^{t,\alpha}_{T} (s, \xi)  &= \frac{\alpha}{2 \sqrt{t}} [ e^{-(t-s)\xi^2/2} -e^{-s\xi^2/2}],
\end{align*}
and for $t \leq s \leq T$, 
\begin{align*}
	\partial_s \Fcal K^{t,\alpha}_{T} (s, \xi) &= \frac{\alpha}{4 \sqrt{t}} [ -e^{-(s-t)\xi^2/2} + e^{-s\xi^2/2}],\\
	- i \xi \Fcal \mu^{t,\alpha}_{T,0} (\xi) + \xi^2 \Fcal K^{t,\alpha}_{T} (s, \xi)  &= \frac{\alpha}{2 \sqrt{t}} [ e^{-(s-t)\xi^2/2} -e^{-s\xi^2/2}],
\end{align*}
the first two lines in \eqref{q_jk_1} equal $A(0,t_j) + A(t_j,t_k) + A(t_k,T)$, where
\begin{multline*}
A(0,t_j)  = \frac{1}{32 \sqrt{t_jt_k}}\int_{0}^{t_j} \int_{\R} \Big\{ [ e^{-(t_j-s)\xi^2/2} + e^{-s\xi^2/2}][ e^{-(t_k-s)\xi^2/2} + e^{-s\xi^2/2}] \\
+ [ e^{-(t_j-s)\xi^2/2} -e^{-s\xi^2/2}][ e^{-(t_k-s)\xi^2/2} -e^{-s\xi^2/2}]\Big\}\,d\xi\,ds\\
=  \frac{1}{16 \sqrt{t_jt_k}}\int_{0}^{t_j} \int_{\R} \Big\{ e^{-(\tfrac{t_j+t_k}{2}-s)\xi^2} + e^{-s\xi^2}\Big\} \,d\xi\,ds = \frac{\sqrt{\pi}}{16 \sqrt{t_jt_k}}\int_{0}^{t_j} \Big\{ \frac{1}{\sqrt{\tfrac{t_j+t_k}{2}-s}} + \frac{1}{\sqrt{s}}\Big\}\,ds\\
=  \frac{\sqrt{\pi}}{8 \sqrt{t_jt_k}} \Big\{ \sqrt{\frac{t_j+t_k}{2}}  - \sqrt{\frac{t_k-t_j}{2}} + \sqrt{t_j}\Big\},
\end{multline*}
\begin{multline*}
A(t_j,t_k)  = \frac{1}{32 \sqrt{t_jt_k}}\int_{t_j}^{t_k} \int_{\R}   \Big\{ [ e^{-(t_k-s)\xi^2/2} + e^{-s\xi^2/2}][ -e^{-(s-t_j)\xi^2/2} + e^{-s\xi^2/2}] \\
+  [ e^{-(t_k-s)\xi^2/2} -e^{-s\xi^2/2}] [ e^{-(s-t_j)\xi^2/2} -e^{-s\xi^2/2}] \Big\} \,d\xi\,ds\\
= \frac{1}{16 \sqrt{t_jt_k}} \int_{t_j}^{t_k} \int_{\R} \Big\{ e^{-s \xi^2} - e^{-(s-\tfrac{t_j}{2})\xi^2}\Big\} \,d\xi\,ds = \frac{\sqrt{\pi}}{8 \sqrt{t_jt_k}} \Big\{  \sqrt{t_k} - \sqrt{t_j} - \sqrt{t_k - \frac{t_j}{2}} +\sqrt{\frac{t_j}{2}}\Big\},
\end{multline*}
and 
\begin{multline*}
	A (t_k,T) =  \frac{1}{32 \sqrt{t_jt_k}}\int_{t_k}^{T} \int_{\R}   \Big\{ [ -e^{-(s-t_k)\xi^2/2} + e^{-s\xi^2/2}] [ -e^{-(s-t_j)\xi^2/2} + e^{-s\xi^2/2}] \\
	+   [ e^{-(s-t_k)\xi^2/2} -e^{-s\xi^2/2}] [ e^{-(s-t_j)\xi^2/2} -e^{-s\xi^2/2}] \Big\} \,d\xi\,ds\\
	= \frac{1}{16 \sqrt{t_jt_k}} \int_{t_k}^{T} \int_{\R} \Big\{ e^{-(s-\tfrac{t_k+t_j}{2}) \xi^2} + e^{-s\xi^2} -e^{-(s-\tfrac{t_j}{2}) \xi^2} - e^{-(s-\tfrac{t_k}{2}) \xi^2}\Big\} 
	\,d\xi\,ds \\
	= \frac{\sqrt{\pi}}{8 \sqrt{t_jt_k}} \Big\{  \sqrt{T - \frac{t_k+t_j}{2}} -   \sqrt{\frac{t_k-t_j}{2}} + \sqrt{T} - \sqrt{t_k} - \sqrt{T - \frac{t_j}{2}} + \sqrt{t_k - \frac{t_j}{2}} - \sqrt{T - \frac{t_k}{2}}  + \sqrt{\frac{t_k}{2}} \Big\}.
\end{multline*}
Thus, the first two lines in \eqref{q_jk_1} equal 
\[\frac{\sqrt{\pi}}{8 \sqrt{t_jt_k}} \Big\{ \sqrt{\frac{t_j+t_k}{2}}  -2 \sqrt{\frac{t_k-t_j}{2}} + \sqrt{T}  + \sqrt{\frac{t_j}{2}} + \sqrt{\frac{t_k}{2}} + \sqrt{T - \frac{t_k+t_j}{2}}  - \sqrt{T - \frac{t_j}{2}}  - \sqrt{T - \frac{t_k}{2}}  \Big\}.\]
Similarly, since
\[	 \Fcal \mu^{t,\alpha}_{T,0} (\xi) + i \xi \Fcal K^{t,\alpha}_{T} (T, \xi)  = \frac{i \alpha}{2 \xi \sqrt{t}} [ e^{-(T-t)\xi^2/2} -e^{-T\xi^2/2}],\]
the last two lines in \eqref{q_jk_1} equal 
\[	\frac{1}{16 \sqrt{t_jt_k}} \int_{\R} \xi^{-2} \Big\{  [1-e^{-t_j\xi^2/2}][1-e^{-t_k\xi^2/2}]
+[ e^{-(T-t_j)\xi^2/2} -e^{-T\xi^2/2}] [ e^{-(T-t_k)\xi^2/2} -e^{-T\xi^2/2}]\Big\} \,d \xi .\]
Using the integeral formula
\[\int_{\R} \frac{1-e^{-ax^2}}{x^2} dx = 2 \sqrt{\pi a}, \quad a > 0,\]
the last two lines in \eqref{q_jk_1} equal 
\[	\frac{\sqrt{\pi}}{8 \sqrt{t_jt_k}} \Big\{ \sqrt{\frac{t_j}{2}} + \sqrt{\frac{t_k}{2}} - \sqrt{\frac{t_j+t_k}{2}} - \sqrt{T - \frac{t_j+t_k}{2}} - \sqrt{T} + \sqrt{T-\frac{t_k}{2}} + \sqrt{T-\frac{t_j}{2}}\Big\}.\]
Therefore,
\[q_{jk} = \frac{\sqrt{2 \pi}}{8 \sqrt{t_jt_k}} (\sqrt{t_k} + \sqrt{t_j} - \sqrt{t_k - t_j}),\]
as claimed.

\subsection{Proof of \eqref{uniform_see4}}\label{app_pf_uniform_see4} By Feynman-Kac formula (\cite[Lemma A.1.7.2]{klscaling}), we may bound the expression in \eqref{uniform_see4} by
\[\frac{kN^2}{a_N^2} \sup_{f:\nu_\rho-\text{density}}\Big\{ \int\frac{a_N K}{nN^2} \sum_{x=0}^{nN-1} \big(\eta(x) - \eta(x+1)\big) f(\eta)\nu_\rho (d \eta) - \<-\gen \sqrt{f},\sqrt{f}\>_\rho\Big\}.\]
Above, we call $f$ is a $\nu_\rho-$density if $f \geq 0$ and $\int f d \nu_\rho = 1$, and for two local functions $f,g: \Omega \rightarrow \R$,
\[\<f,g\>_\rho = \int f(\eta) g(\eta)\,\nu_\rho (d \eta).\]
Since $\nu_{\rho}$ is invariant for the SSEP,
\[ \<-\gen \sqrt{f},\sqrt{f}\>_\rho = \frac{1}{4} \sum_{x \in \Z} \int \big(\sqrt{f}(\eta^{x,x+1}) - \sqrt{f} (\eta)\big)^2 \nu_\rho (d \eta). \]
Making the change of variables $\eta \mapsto \eta^{x,x+1}$ and using the Cauchy-Schwarz inequality, for any $A > 0$, we bound the first term inside the above brace by
\begin{multline*}
	\frac{a_N K}{2nN^2} \sum_{x=0}^{nN-1} \int \big(\eta(x)-\eta(x+1)\big) \big( f (\eta) -f (\eta^{x,x+1}) \big) \nu_\rho (d \eta)
	\\
	\leq   \frac{a_N K}{4nN^2} \sum_{x=0}^{nN-1} \Big\{  A \int \big(\sqrt{f}(\eta^{x,x+1}) - \sqrt{f} (\eta)\big)^2 \nu_\rho (d \eta) + \frac{1}{A}   \int \big(\sqrt{f}(\eta^{x,x+1}) + \sqrt{f} (\eta)\big)^2 \nu_\rho (d \eta) \Big\}\\
	\leq  \frac{a_N KA}{ nN^2}  \<-\gen \sqrt{f},\sqrt{f}\>_\rho + \frac{a_NK}{NA}.
\end{multline*}
Taking $A = nN^2 / (a_N K)$, we finally bound the expression in \eqref{uniform_see4} by $kK^2/(Nn)$. Since $k \leq TN$, the proof is concluded.

\bibliographystyle{plain}
\bibliography{zhaoreference.bib}
\end{document}